\newcommand{\newtext}[1]{\textcolor{MidnightBlue}{#1}}
\renewcommand{\newtext}[1]{#1}                               
\newcommand{\Alfven}{Alfv\'{e}n }
\newcommand{\En}{\mathcal{E}}
\newcommand{\Bvec}{\mathbf{B}}
\newcommand{\Avec}{\mathbf{A}}
\newcommand{\uvec}{\mathbf{u}}
\newcommand{\divg}{\nabla \cdot}
\newcommand{\curl}{\nabla \times}
\newcommand{\bigO}{\mathcal{O}}
\newcommand{\R}{\mathbb{R}}
\newcommand{\weirdI}{\mathcal{I}}
\newcommand{\weirdJ}{\mathcal{J}}
\newcommand{\StepTheta}{{\bf{Step-$\theta$}}}
\newcommand{\PhiWENO}{\Phi_{\text{WENO5}}}
\theoremstyle{plain}
\newtheorem{claim}{Claim}
\newtheorem{problem}{Problem}
\theoremstyle{remark}
\newtheorem{remark}{Remark}
\title{A high-order positivity-preserving single-stage single-step method for the ideal magnetohydrodynamic equations}
\begin{document}
\begin{frontmatter}

\author[andrew]{Andrew J. Christlieb}
\ead{christli@msu.edu}

\author[msu-math, ca]{Xiao Feng}
\ead{fengxia2@msu.edu}

\author[usna-math]{David C. Seal}
\ead{seal@usna.edu}

\author[rpi-math]{Qi Tang}
\ead{tangq3@rpi.edu}

\address[andrew]{Department of Computational Mathematics, Science, and Engineering, Department of Mathematics, and Department of Electrical and Computer Engineering,
	Michigan State University, East Lansing, MI 48824 USA}

\address[msu-math]{Department of Mathematics, Michigan State University, East Lansing, MI 48824, USA}

\address[usna-math]{Department of Mathematics, 
    U.S. Naval Academy,
    121 Blake Road,
    Annapolis, MD 21402, USA}

\address[rpi-math]{Department of Mathematical Sciences,
    Rensselaer Polytechnic Institute,
    Troy, NY 12180, USA}

\fntext[ca]{Corresponding author}

\begin{abstract}

We propose a high-order finite difference weighted ENO (WENO) method for the
ideal magnetohydrodynamics (MHD) equations.
The proposed method is single-stage (\newtext{i.e.,} 
it has no internal stages to store),
single-step (\newtext{i.e.,} 
it has no time history that needs to be stored), maintains a discrete
divergence-free condition on the magnetic field, and
has the capacity to preserve the positivity of the density and pressure.  To accomplish this, 
we use a Taylor discretization of the Picard integral formulation (PIF) of the finite difference 
WENO method proposed in [{\it SINUM}, {53} (2015), pp.\ 1833--1856], where the focus is on a high-order
discretization of the fluxes (as opposed to the conserved variables).
\newtext{We use the version where fluxes are expanded to third-order accuracy in time, and 
for the fluid variables space is discretized using the classical fifth-order finite difference WENO discretization.}
We use constrained transport in order to obtain divergence-free magnetic fields, which means
that we simultaneously evolve the magnetohydrodynamic (that has an evolution equation for the magnetic field) 
and magnetic potential equations alongside each other, and set the magnetic field to be the (discrete) curl of the magnetic potential
after each time step.  \newtext{In this work, we compute these derivatives to
fourth-order accuracy.}
In order to retain a single-stage, single-step method,
we develop a novel Lax-Wendroff discretization for the evolution of the magnetic potential,
where we start with technology used for 
Hamilton-Jacobi equations in order to construct a non-oscillatory magnetic field.
\newtext{The end result is an algorithm that is similar to our previous work
[{\it JCP}, {268}, (2014), pp.\ 302--325],
but this time the time stepping is replaced through a Taylor method with the addition of a positivity-preserving limiter.}
\newtext{Finally,} positivity preservation is realized by 
introducing a parameterized flux limiter that considers a linear combination of
high and low-order numerical fluxes.  The choice of the free parameter is then given 
in such a way that the fluxes are limited towards the low-order solver until
positivity is attained.  \newtext{Given the lack of additional degrees of freedom in the system, this positivity limiter 
lacks energy conservation where the limiter turns on.  However, this ingredient can be dropped for problems where the pressure
does not become negative.}
We present two and three dimensional numerical results for several standard test problems including
a smooth Alfv\'{e}n wave (to verify formal order of accuracy), 
shock tube problems (to test the shock-capturing ability of the scheme), \newtext{Orszag-Tang,} 
and cloud shock interactions. 
These results assert the robustness and verify the high-order of accuracy of the proposed scheme.

\end{abstract}

\begin{keyword}
	magnetohydrodynamics; finite difference WENO; Lax-Wendroff; constrained transport; positivity preserving; high order
\end{keyword}

\end{frontmatter}

\section{Introduction}

The ideal magnetohydrodynamic (MHD) equations model the dynamics of a
quasi-neutral, perfectly conducting plasma~\cite{Schnack2009}.  A vast range
of areas including astrophysics and laboratory plasmas can be modeled with
this system.  \newtext{Among other methods, various high-order numerical schemes based on the
essentially non-oscillatory (ENO) \cite{LondrilloDelZanna00} as well as the 
weighted ENO (WENO) reconstruction technique \cite{Jiang1999,li2003modern,balsara2009divergence,shen2012cusp,Balsara2013,Christlieb2014,ChenTothGomb2016}
have been applied successfully to ideal MHD in the past two
decades.}
These \newtext{high order} schemes are capable
of resolving complex features such as shocks and turbulences using fewer grid
points than low-order schemes \newtext{for the same level of error}, as is common with many high-order
shock-capturing schemes.

It often happens in large-scale MHD simulations that the complex features are concentrated in a small portion of the simulation domain.  
Adaptive mesh refinement (AMR) is a technique that is designed for treating such locality of complexity in hydrodynamics 
and magnetohydrodynamics.  
\newtext{One of the chief difficulties with implementing high order schemes within an AMR framework is that boundary conditions for the refined region need to be 
specified in a consistent manner \cite{MccCol2011}.  This becomes difficult for multistage RK methods, because high order solutions cannot be found if one 
simply uses high-order interpolated values (in time) at the ghost points that are required for the intermediate stages of the method.}
Preliminary work that combines WENO spatial discretizations with 
strong stability preserving Runge-Kutta (SSP-RK) time-stepping is conducted in~\cite{Shen2011,Wang2015}, \newtext{and very recent work
makes use of curvilinear grids to extend finite difference methods to problems with geometry \cite{ChenTothGomb2016}. However, the authors in \cite{ChenTothGomb2016}  use global time steps (which precludes the possibility of introducing local time stepping), and perhaps more troublesome, they drop mass conservation for their framework
to work.}

In choosing building blocks for AMR code, it has been argued that single-stage, single-step methods are advantageous~\cite{Colella1990,Balsara2009},  partly because fewer synchronizations are needed per step than multistage RK methods.  
%
%
\newtext{The fact that single-stage, single-step methods do not have an issue with these synchronizations} 
is possibly one of the reasons \newtext{they } 
have gained much attention in the past two decades, \newtext{and one reason
we are choosing to pursue these methods.}
Broadly construed, these methods are based on Lax and Wendroff's original idea of using the Cauchy-Kovalevskaya procedure to convert temporal derivatives into spatial derivatives in order to define a numerical method~\cite{Lax1960}.  Notable high-order single-stage, single-step methods include the Arbitrary DERivative (ADER) methods~\cite{Titarev2002,Toro2001}, the \newtext{Lax-Wendroff} 
finite difference WENO methods~\cite{Qiu2003}, the Lax-Wendroff discontinuous Galerkin (DG) methods~\cite{Qiu2005} and \newtext{space-time schemes applied directly to second-order wave equations \cite{Henshaw2006, Banks2012}}.  
Of the three classes of high-order methods based upon Lax-Wendroff time stepping, only the ADER methods have been applied to magnetohydrodynamics~\cite{Balsara2009,Balsara2013,Boscheri2014}, whereas similar investigations have not been done for the other classes.  \newtext{An additional advantage that 
single-stage single-step Taylor methods offer is their \emph{low-storage} opportunities.  This requires 
care, 
because these methods can easily end up requiring the same amount of storage as their equivalent RK counterpart (e.g., if each time derivative is stored in order
to reduce coding complexity).}

The current work is based on the Taylor discretization of the
Picard integral formulation of the finite difference WENO (PIF-WENO) method~\cite{Seal2014b}.
Compared with other WENO methods that use Lax-Wendroff
time discretizations~\cite{Qiu2003}, our method has the advantage that
its focus is on constructing high-order Taylor expansions of the 
fluxes (which are used to define a conservative method through WENO reconstruction) as opposed to the conserved variables.
This allows, for example, the adaptation of a positivity-preserving limiter, which we describe in this document.

An important issue in simulations of MHD systems is the controlling of the
divergence error of the magnetic field, since numerical schemes based on the
transport equations alone will, in general, accumulate errors in the
divergence of the magnetic field. Failure to address this issue creates an unphysical force parallel
to the magnetic field~\cite{Brackbill1980}, and if this is not taken care of, it
will often lead to failure of the simulation code.  Popular techniques
used to solve this problem include (1) the non-conservative eight-wave
method~\cite{Gombosi1994}, (2) the projection method~\cite{Brackbill1980},
(3) the hyperbolic divergence cleaning method~\cite{Dedner2002}, and (4) the
various \newtext{constrained} 
transport methods~\cite{Balsara2004,Balsara1999a,Christlieb2014,Dai1998,Evans1988,Fey2003,Helzel2011,Helzel2013,Rossmanith2006}.
T\'{o}th conducts an extensive survey in~\cite{Toth2000}.

The current paper uses the unstaggered constrained transport framework proposed by Rossmanith \cite{Rossmanith2006}.  This framework evolves a vector potential \newtext{that} sits on the same mesh as the conserved quantities.  \newtext{T}his vector potential to correct the magnetic field.  Historically, the term ``constrained transport'' has been used to refer to a class of methods that incorporates the divergence-free condition into the discretization of the transport equation of the magnetic field, often done in a way that can be interpreted as maintaining an electric field on a staggered mesh~\cite{Evans1988,Toth2000}.  Some authors actually still distinguish between this type of ``constrained transport'' and the ``vector potential'' approach~\cite{Jardin2012}.  However, as is well-known, evolving a vector potential is conceptually equivalent to evolving an electric field.  The unstaggered approach has the added benefit of ease for potential embedding in an AMR framework.

An important piece in any vector-potential based constrained transport method
is the discretization of the evolution equation of the vector potential.  This
evolution equation is a nonconservative weakly hyperbolic system, and it can
be treated numerically from this viewpoint~\cite{Helzel2013}.  An alternative
approach is to view it as a modified system of Hamilton-Jacobi
equations~\cite{Christlieb2014}.  The current work adopts the latter approach,
and uses a method inspired by a \newtext{Lax-Wendroff} 
numerical scheme for
Hamilton-Jacobi equations that was proposed in~\cite{Qiu2007}.  The
artificial resistivity terms used in~\cite{Christlieb2014} are adapted
into the present single-stage, single-step scheme.

One further challenge for numerical simulations of MHD is that of retaining the positivity of the density and pressure.  This is critical when the thermal pressure takes up only a small portion of the total energy, \newtext{i.e.,} 
when the $\beta$ value is small.  Almost all positivity-preserving methods exploit the presumed positivity-preserving property of certain low-order schemes, such as the Lax-Friedrichs scheme.  Whereas earlier methods often rely on switching between high- and low-order updates~\cite{Balsara1999b,Janhunen2000}, more recent work tends to use combinations of the two.  Examples include the work of Balsara~\cite{Balsara2012} and Cheng et al.~\cite{Cheng2013}, which limit the conserved quantities at certain nodal points, and the work of Christlieb et al.~\cite{Christlieb2015}, which combines high- and low-order fluxes through a single free parameter at each flux interface.  In the current work, we adopt an approach similar to that used in~\cite{Christlieb2015}.  This approach seeks a suitable convex combination of the high- and low-order fluxes at each cell at each time step.  This flux limiter we adopt is an adaptation of the maximum principle preserving (MPP) flux limiter~\cite{Xu2014} (that has its roots in flux corrected transport schemes~\cite{Boris1997,Zalesak1979}) for the purposes of retaining positivity of the density and pressure.  We note that positivity-preserving has also been investigated for hydrodynamics.  The limiters mentioned in this paragraph can be viewed as generalizations of limiters that have been applied to Euler's equations~\cite{Zhang2010,Christlieb2015a,Seal2014c}.

The outline of the paper is as follows.  We first review the constrained transport framework in Section~\ref{sec:CT}.  Then in Section~\ref{sec:PIFWENO}, we give a brief review of the Taylor-discretization PIF-WENO method, which we use for updating the conserved quantities.  In Section~\ref{sec:DA}, we describe our method for evolving the vector potential, and the positivity-preserving limiter is described in Section~\ref{sec:PP}.  In Section~\ref{sec:NR}, we show the high-order accuracy and robustness of our method through a series of test problems.  The conclusion and future work is given in Section~\ref{sec:CFW}.

\section{A constrained transport framework}
\label{sec:CT}
In this section we review the unstaggered constrained transport framework that was proposed in~\cite{Rossmanith2006}.
In conservation form,  the ideal MHD equations are given by
\begin{gather}
    \label{eq:MHD}
    {\partial_t}
    \begin{bmatrix}
        \rho \\
        \rho \uvec \\
        \En \\
        \Bvec
    \end{bmatrix}
    + \divg
    \begin{bmatrix}
        \rho \uvec \\
        \rho \uvec \otimes \uvec + (p + \frac{1}{2}\lVert\Bvec\rVert^2)\mathbb{I} - \Bvec \otimes \Bvec \\
        \uvec(\En + p + \frac{1}{2}\lVert\Bvec\rVert^2) - \Bvec(\uvec\cdot\Bvec) \\
        \uvec \otimes \Bvec - \Bvec \otimes \uvec
    \end{bmatrix}
    = 0, \\
    \label{eq:DivergenceFree}
    \divg \Bvec = 0,
\end{gather}
where $\rho$ is the mass density, $\rho \uvec$ is the momentum, $\En$ is the
total energy, $p$ is the thermal pressure, $\left\|\cdot\right\|$ is the
Euclidean vector norm, $\gamma = 5/3$ is the ideal gas constant, and the
pressure satisfies the equation of state
\begin{equation}
    \label{eq:EquationOfState}
    \En = \frac{p}{\gamma - 1} + \frac{\rho\left\| \uvec \right\|^2}{2} + \frac{\left\|\Bvec\right\|^2}{2}.
\end{equation}
Equation~\eqref{eq:MHD} together with Equation~\eqref{eq:EquationOfState} form
a hyperbolic \newtext{system} of conservation laws. 
\newtext{The eigendecomposition for this system} 
is described in
\cite{Brio1987}.  For all the simulations in the current paper, we use
the eigen-decomposition described in~\cite{Powell1999} (other decompositions
have different scalings for the left and right eigenvectors).
Equation~\eqref{eq:DivergenceFree} is not a constraint, but rather an
involution~\cite{Dafermos2010}, in the sense that if it is satisfied for the
initial conditions, \newtext{then} it is satisfied \newtext{for} all later times.  However, 
this condition must be explicitly enforced 
in the discretization of the system 
at each time step in order to maintain numerical stability.

The unstaggered constrained transport method in~\cite{Rossmanith2006} makes use of the fact that because the magnetic field $\Bvec$ is divergence-free, it admits a vector potential $\Avec$ \newtext{that satisfies}
\begin{equation}
    \label{eq:BEqualsCurlA}
    \Bvec  = \nabla \times \Avec.
\end{equation}
It is therefore possible to maintain, at the discrete level, the divergence-free property of $\Bvec$ by evolving $\Avec$ alongside the conserved quantities, and correcting $\Bvec$ to be the curl of $\Avec$.

\newtext{
The evolution equation for the magnetic potential can be derived by starting with Maxwell's equations.  That is, if we start with Faraday's law (instead of starting with the
ideal MHD equations), we have
\begin{equation}
   \partial_t  \Bvec = - \curl \mathbf{E}.
\end{equation}
Next, we approximate the electric field by Ohm's law for a perfect conductor
 \begin{equation}
     \mathbf{E} = \Bvec \times \uvec,
 \end{equation}
which yields the induction equation used for ideal MHD:
\begin{equation}
    \label{eq:BEquationCurl}
    \partial_t  \Bvec + \curl (\Bvec \times \uvec) = 0.
\end{equation}
(After applying some appropriate vector identities,
Eqn.~\eqref{eq:BEquationCurl} can be written in a conservative form.)
By substituting~\eqref{eq:BEqualsCurlA} into~\eqref{eq:BEquationCurl}, we obtain
\begin{equation}
    \curl(\partial_t  \Avec + (\curl \Avec)\times \uvec) = 0.
\end{equation}
This implies there is a scalar function $\psi$ such that
\begin{equation}
    \label{eq:AEquationGaugeTBD}
    \partial_t  \Avec + (\curl \Avec) \times \uvec = -\nabla \psi.
\end{equation}
Different choices of $\psi$ in~\eqref{eq:AEquationGaugeTBD} correspond to
different gauge conditions.  It \newtext{is} noted in~\cite{Helzel2011} that the Weyl
gauge, which sets $\psi \equiv 0$, leads to stable numerical solutions.  With
the Weyl gauge, the evolution equation for magnetic
potential~\eqref{eq:AEquationGaugeTBD} becomes
\begin{equation}
    \label{eq:AEquationWeylGauge}
    \partial_t  \Avec + (\curl \Avec) \times \uvec = 0.
\end{equation}
}

In the constrained transport framework, both the conserved quantities ${\bf q } := (\rho, \rho\uvec, \En, \Bvec)$ and the magnetic potential $\Avec$ are evolved.  The time step from $t^n$ to $t^{n+1}$ proceeds as follows:
\begin{enumerate}[{\bf StepCT 1}]
    \item Discretize~\eqref{eq:MHD} and update the conserved quantities
        \begin{equation}
            \label{eq:qUpdate}
            (\rho^n, \rho\uvec^n, \En^n, \Bvec^n) \leadsto (\rho^{n+1}, \rho\uvec^{n+1}, \En^*, \Bvec^*),
        \end{equation}
        where $\En^*$ and $\Bvec^*$ are subject to corrections described in Steps~\ref{item:BCorrection} and~\ref{item:ECorrection} below.
    \item Discretize~\eqref{eq:AEquationWeylGauge} and update the magnetic potential:
            $\Avec^n \leadsto \Avec^{n+1}$.
    \item \label{item:BCorrection} Correct $\Bvec$ to be the curl of $\Avec$ through
        \begin{equation}
            \label{eq:BCorrection}
            \Bvec^{n+1} = \curl \Avec.
        \end{equation}
        \newtext{Note that this step modifies the pressure.}
    \item \label{item:ECorrection}  \newtext{Modify the total energy so the pressure remains unchanged (see below).} 
\end{enumerate}

In the current work, the discretization we use for~\eqref{eq:qUpdate} is the
Taylor-discretization of the PIF-WENO method~\cite{Seal2014b}.  This method is
a single-stage single-step method for \newtext{systems of} hyperbolic conservation laws.  We give a brief review of this method in Section~\ref{sec:PIFWENO}.

\newtext{It is noted} 
in~\cite{Helzel2011} that Equation~\eqref{eq:AEquationWeylGauge} is a \newtext{weakly hyperbolic system that must be} 
treated carefully in order to avoid numerical instabilities.  Along the lines of our previous work in~\cite{Christlieb2014}, we use a discretization that is inspired by numerical schemes for Hamilton-Jacobi equations, \newtext{in order to be able to define non-oscillatory derivatives of the magnetic field.}  We present this discretization in Section~\ref{sec:DA}.

For certain problems, such as those involving low-$\beta$ plasma, negative
density or pressure occur in numerical simulations, even when constrained
transport is present.  This requires the use of a positivity-preserving
limiter~\cite{Christlieb2015,Seal2014c,Xu2014}.  This limiter preserves the
positivity of density and pressure by modifying the numerical flux 
that evolves the conserved variables.  We describe this
limiter in detail in Section~\ref{sec:PP}.  
When this limiter is used, we also apply the following fix to
\newtext{the pressure} in {\bf StepCT \ref{item:ECorrection}} by \newtext{modifying the total energy through}
\begin{equation}
    \label{eq:KeepP}
    \En^{n+1} = \En^* + \frac{1}{2} (\lVert\Bvec^{n+1}\rVert^2 - \lVert\Bvec^{*}\rVert^2).
\end{equation}
This \newtext{energy ``correction''} keeps the pressure the same as before the magnetic field
correction \newtext{in~\eqref{eq:BCorrection}}.  While this clearly has the disadvantage of violating the
conservation of energy, it is nonetheless needed in order to preserve the
positivity of pressure \newtext{(otherwise the modifications made to the
magnetic field to obtain a divergence free property could potentially cause
the pressure to become negative)}.  This technique is explored in \cite{Balsara1999a,Christlieb2015,Toth2000}.  
In the results presented in the current paper, we use the energy correction~\eqref{eq:KeepP} if and only if the positivity-preserving limiter is turned on.

\section{The Taylor-discretization PIF-WENO method}
\label{sec:PIFWENO}
We now briefly review the Taylor-discretization Picard integral formulation
weighted essentially non-oscillatory (PIF-WENO) method \cite{Seal2014b}.  This method applies to
generic hyperbolic conservation laws in arbitrary dimensions, of which the
ideal MHD equation is an example.   

For the purpose of illustration, we present the method here for a 2D system.
In 2D, a hyperbolic conservation law takes the form
\begin{equation}
    \label{eq:2DHyperbolicConservationLaw}
   \partial_t \mathbf{q} + \partial_x \mathbf{f}(\mathbf{q}) + \partial_y {\mathbf{g}}(\mathbf{q}) = 0,
\end{equation}
where  $\mathbf{q}(t, x, y):\R^+ \times \R^2 \to \R^m$
is the vector of~$m$~conserved variables, and $\mathbf{f}, \mathbf{g}: \R^m \to \R^m$ are the two components of the flux function.  Formally integrating~\eqref{eq:2DHyperbolicConservationLaw} in time from $t^n$ to $t^{n+1}$, one \newtext{arrives at} the \emph{Picard integral formulation} of~\eqref{eq:2DHyperbolicConservationLaw} given by
\begin{equation}
    \mathbf{q}^{n+1} = \mathbf{q}^n - \Delta t \partial_x \mathbf{F}^n(x, y)  - \Delta t \partial_y \mathbf{G}^n(x, y),
\end{equation}
where the \emph{time-averaged fluxes} $\mathbf{F}$ and $\mathbf{G}$ are defined as
\begin{equation}
    \mathbf{F}^n(x, y) = \frac{1}{\Delta t} \int_{t^n}^{t^{n+1}}\mathbf{f}(\mathbf{q}(t, x, y))\,dt, \quad \text{and} \quad
    \mathbf{G}^n(x, y) = \frac{1}{\Delta t} \int_{t^n}^{t^{n+1}}\mathbf{g}(\mathbf{q}(t, x, y))\,dt.
\end{equation}

Given a domain~$\Omega = [a_x, b_x] \times [a_y, b_y]$, the point-wise approximations $\mathbf{q}^n_{i, j} \approx \mathbf{q}(t^n, x_i, y_j)$ are placed at
\begin{align}
    x_i &= a_x + \left(i - \frac{1}{2}\right) \Delta x, &
    \Delta x &= \frac{b_x - a_x}{m_x}, &
    i &\in \{1, \ldots, m_x\}, \\
    y_i &= a_y + \left(i - \frac{1}{2}\right) \Delta y, &
    \Delta y &= \frac{b_y - a_y}{m_y}, &
    y &\in \{1, \ldots, m_y\},
\end{align}
and time $t = t^n$\newtext{, where $m_x$ and $m_y$ are positive integers}.  The PIF-WENO scheme solves~\eqref{eq:2DHyperbolicConservationLaw} by setting
\begin{equation}
    \label{eq:2DPIFWENOUpdate}
    \mathbf{q}^{n+1}_{i, j} = \mathbf{q}^{n}_{i, j} - \frac{\Delta t}{\Delta x} \left(\hat{\mathbf{F}}^{n}_{i+1/2,j} - \hat{\mathbf{F}}^{n}_{i-1/2,j}\right) - \frac{\Delta t}{\Delta y} \left(\hat{\mathbf{G}}^{n}_{i, j+1/2} - \hat{\mathbf{G}}^{n}_{i,j-1/2}\right),
\end{equation}
where $\hat{\mathbf{F}}_{i\pm 1/2, j}$ and $\hat{\mathbf{G}}_{i, j\pm 1/2}$ are obtained by
applying the classical WENO reconstruction to the time-averaged fluxes
$\mathbf{F}^n_{i, j}$ and $\mathbf{G}^n_{i, j}$ instead of to the ``frozen-in-time'' fluxes, as
is traditional in a method of lines (MOL) formulation.  Here, $\mathbf{F}^n_{i, j}$ and
$\mathbf{G}^n_{i, j}$ are approximated using a third-order Taylor expansion in time,
followed by an application of Cauchy-Kovalevskaya procedure to replace the
temporal derivatives with spatial derivatives.   The resulting scheme on the
conserved quantities is third-order accurate in time.  We omit the details here
and refer the reader to~\cite{Seal2014b}.  \newtext{Extensions to three dimensions follow by including a third component for the flux function.}

\section{The evolution of the magnetic potential equation}
\label{sec:DA}

\newtext{Recall that the evolution equation for the magnetic potential is defined in \eqref{eq:AEquationWeylGauge}.}
If we expand the curl and cross-product operator, we see that $\Avec$ satisfies
\begin{gather}
    \label{eq:AQuasilinear}
    \partial_t \Avec + N^x \partial_x \Avec + N^y \partial_y \Avec + N^z \partial_z \Avec = 0, \\
\intertext{where}
    \label{eq:NMatrices}
    N^x =
        \begin{bmatrix}
            0 &    -u^y &    -u^z \\
            0 &    u^x &    0 \\
            0 &    0    &    u^x
        \end{bmatrix}, \quad
    N^y =
        \begin{bmatrix}
            u^y  &     0 &    0 \\
            -u^x &     0 &    -u^z \\
            0    &     0 &    u^y
        \end{bmatrix}, \quad
    N^z =
        \begin{bmatrix}
            u^z  &    0   &    0 \\
            0    &    u^z &    0 \\
            -u^x &    -u^y &    0
        \end{bmatrix}
\end{gather}
\newtext{are matrices defined by components of the velocity field $\mathbf{u} = \left( u^x, u^y, u^z \right)$.}
The system~{\eqref{eq:AQuasilinear}--\eqref{eq:NMatrices}} has the following properties that we take into account when making a discretization:
\begin{enumerate}
    \item This system is weakly hyperbolic. \newtext{This means that all of the eigenvalues of \eqref{eq:AQuasilinear} are real, but in contrast to being strictly hyperbolic, the system is not necessarily diagonalizable.}  While this weak hyperbolicity is an artifact of how the magnetic potential is evolved, it must be treated carefully to avoid \newtext{creating} numerical instabilities \cite{Helzel2011};
    \item Each equation of this system can be viewed as a Hamilton-Jacobi equation with a source term~\cite{Christlieb2014}.
\end{enumerate}
\newtext{One consequence of this second observation is that we can use Hamilton-Jacobi technology in order to define magnetic fields that have 
non-oscillatory derivatives.  That is,
a single derivative of ${\bf B}$ is a second derivative of ${\bf A}$, in which case care need be taken in order to retain a non-oscillatory property.}

In the next two subsections, we will describe how we discretize the
system~{\eqref{eq:AQuasilinear}--\eqref{eq:NMatrices}}, with these two
properties in mind.  Before we do that, we make several remarks.  The first is
that in the discretization, $\Avec$ sits on the same mesh as the conserved
quantities, hence the name ``\emph{unstaggered} constrained transport''.  The
second is that the curl operator $\nabla\times$ is discretized in the same way
as in~\cite{Christlieb2014}, \newtext{which defines a divergence free magnetic field (at the discrete level).}  \newtext{A} final remark is that the discretization we
present here is third-order accurate \newtext{in time}, which is in accordance with the
discretization we use for the conserved quantities.

\subsection{The 2D magnetic potential equation}
\label{sec:2DAEquation}
We begin with the two-dimensional case, where the conserved quantities and the magnetic potential do not depend on~$z$.  In this case, the divergence-free condition is
\begin{equation}
    \divg \Bvec = \partial_x B^x + \partial_y B^y = 0.
\end{equation}
Thus $B^z$ has no impact on the divergence of~$\Bvec$.  It therefore suffices to correct only the first two components of the magnetic field.  A computationally cost-efficient way of doing this is to discard the first two components of the magnetic potential, evolve only the third component according to the evolution equation
\begin{equation}
    \label{eq:AEquation2D}
    \partial_t  A^z + u^x \partial_x A^z + u^y \partial_y A^z = 0,
\end{equation}
and for \newtext{the} magnetic field correction, \newtext{only correct} $B^x$ and $B^y$ \newtext{through}
\begin{equation}
\label{eq:BCorrection2D}
    B^x = \partial_y A^z \quad \text{and} \quad B^y = -\partial_x A^z.
\end{equation}
This is equivalent to imposing $\partial_z A^x = 0$ and $ \partial_z A^y = 0$ on the system~{\eqref{eq:AQuasilinear}--\eqref{eq:NMatrices}}, and 
skipping $B^z$ in the magnetic field correction in Eqn.~\eqref{eq:BCorrection}.

Equation~\eqref{eq:AEquation2D} has the favorable property of being \emph{strongly hyperbolic}~\cite{Rossmanith2006}, in contrast to the weak hyperbolicity in the 3D case.  This equation is also a Hamilton-Jacobi equation, with Hamilton principal function $A^z$, and Hamiltonian
\begin{equation}
    H(t, x, y, \partial_x A^z, \partial_y A^z ) = u^x(t, x, y) \partial_x A^z + u^y(t, x, y) \partial_y A^z.
\end{equation}
It is therefore possible to discretize Equation~\eqref{eq:AEquation2D} using a numerical scheme for Hamilton-Jacobi equations \cite{Christlieb2014}.  In line with the single-stage single-step theme of the proposed scheme, we use a method that is inspired by the \newtext{Lax-Wendroff} 
WENO schemes for Hamilton-Jacobi equations from \cite{Qiu2007}.

A \newtext{Lax-Wendroff} 
WENO scheme for Hamilton-Jacobi
equations is based on the Taylor expansion in time of the Hamilton principal
function.  In order to retain third-order accuracy in time, 
we require a total of three time derivatives of $A^z_{i,j}$:
\begin{equation}
    \label{eq:A3Taylor}
    A^{z\; n+1}_{i, j} = A^{z\; n}_{i, j} + \Delta t \, \partial_t  A^{z\; n}_{i, j} + \frac{\Delta t^2}{2!} \, \partial_{t}^2 A^{z\; n}_{i, j}
    + \frac{\Delta t^3}{3!} \, \partial_{t}^3 A^{z\; n}_{i, j},
\end{equation}
where the temporal derivatives on the right-hand-side are computed in the following manner:
\begin{enumerate}
    \item The first derivative is approximated by the Lax-Friedrichs type numerical Hamiltonian, with high-order reconstruction of $\partial_{x} A^{z}$ and $\partial_{y} A^{z}$ \cite{Osher1991}.  Namely, we define
    \begin{equation}
    \label{eq:A3t}
    \begin{aligned}
        \partial_t  A^{z}_{i, j} & := -\hat{H}( \partial_x A^{z-}_{i, j},  \partial_x A^{z+}_{i, j}, 
        						\partial_y A^{z-}_{i, j},  \partial_y A^{z+}_{i, j}) \\
                         & = -u^x_{i, j}\left(\frac{\partial_x A^{z-}_{i, j} + \partial_x A^{z+}_{i, j}}{2}\right)
                             -u^y_{i, j}\left(\frac{\partial_y A^{z-}_{i, j} + \partial_y A^{z+}_{i, j}}{2}\right) \\
                         & \quad +\alpha^x \left(\frac{\partial_x A^{z+}_{i, j} - \partial_x A^{z-}_{i, j}}{2}\right)
                             +\alpha^y \left(\frac{\partial_y A^{z+}_{i, j} - \partial_y A^{z-}_{i, j}}{2}\right),
    \end{aligned}
    \end{equation}
where
    \begin{equation}
        \alpha^x = \max_{i, j} \lvert u^x_{i, j} \rvert
        \quad\text{and}\quad
        \alpha^y = \max_{i, j} \lvert u^y_{i, j} \rvert
    \end{equation}
are the maximum velocities taken over the entire grid, (i.e., the numerical flux is the global Lax-Friedrichs flux), 
and $\partial_{x}  A^{z \pm}_{ i, j}$, $\partial_{y} A^{z\pm}_{i, j}$ are defined by WENO reconstructions through
    \begin{equation}
        \label{eq:ReconstructA3SpatialDerivatives}
        \begin{aligned}
       \partial_x A^{z-}_{i, j}
        &=
        \PhiWENO\left(
            \frac{\Delta^{+}_{x}A^z_{i-3, j}}{\Delta x},
            \frac{\Delta^{+}_{x}A^z_{i-2, j}}{\Delta x},
            \frac{\Delta^{+}_{x}A^z_{i-1, j}}{\Delta x},
            \frac{\Delta^{+}_{x}A^z_{i, j}}{\Delta x},
            \frac{\Delta^{+}_{x}A^z_{i+1, j}}{\Delta x},
            \right), \\
         \partial_x  A^{z+}_{i, j}
        &=
        \PhiWENO\left(
            \frac{\Delta^{+}_{x}A^z_{i+2, j}}{\Delta x},
            \frac{\Delta^{+}_{x}A^z_{i+1, j}}{\Delta x},
            \frac{\Delta^{+}_{x}A^z_{i, j}}{\Delta x},
            \frac{\Delta^{+}_{x}A^z_{i-1, j}}{\Delta x},
            \frac{\Delta^{+}_{x}A^z_{i-2, j}}{\Delta x},
            \right),  \\
       \partial_y  A^{z-}_{i, j}
        &=
        \PhiWENO\left(
            \frac{\Delta^{+}_{y}A^z_{i, j-3}}{\Delta y},
            \frac{\Delta^{+}_{y}A^z_{i, j-2}}{\Delta y},
            \frac{\Delta^{+}_{y}A^z_{i, j-1}}{\Delta y},
            \frac{\Delta^{+}_{y}A^z_{i, j}}{\Delta y},
            \frac{\Delta^{+}_{y}A^z_{i, j+1}}{\Delta y},
            \right), \\
        \partial_y A^{z+}_{i, j}
        &=
        \PhiWENO\left(
            \frac{\Delta^{+}_{y}A^z_{i, j+2}}{\Delta y},
            \frac{\Delta^{+}_{y}A^z_{i, j+1}}{\Delta y},
            \frac{\Delta^{+}_{y}A^z_{i, j}}{\Delta y},
            \frac{\Delta^{+}_{y}A^z_{i, j-1}}{\Delta y},
            \frac{\Delta^{+}_{y}A^z_{i, j-2}}{\Delta y},
            \right),
        \end{aligned}
    \end{equation}
and $\Delta^{+}_{x}A^z_{i, j} := A^z_{i+1, j} - A^z_{i, j}$ and
$\Delta^{+}_{y}A^z_{i, j} := A^z_{i, j+1} - A^z_{i, j}$.  
The function $\PhiWENO$ is the classical fifth-order WENO reconstruction whose coefficients can be found in many sources (e.g., ~\cite{Jiang1996,Shu1998, Christlieb2014}).
\newtext{Note that this difference operator is designed for Hamilton-Jacobi
problems, and does not produce the typical flux difference form that most
hyperbolic solvers produce.}

    \item The higher derivatives $\partial_{t}^2 A^{z}$ and $\partial_{t}^3 A^{z}$ are converted into spatial derivatives by way of the Cauchy-Kovalevskaya procedure, and the resulting spatial derivatives are approximated using central differences.  For example, $\partial_{t}^2 A^{z}$ is converted into spatial derivatives by
    \begin{align} 
    	\nonumber  
         \partial_{t}^2  A^z & =\partial_{t}  (\partial_{t} A^z) \\ \nonumber
                 & = \partial_t (-u^x \partial_{x} A^z -u^y \partial_{y} A^z) \\ \nonumber
                 & = -\partial_t u^x \, \partial_{x} A^z - u^x \partial_t
                 \partial_{x} A^z - \partial_t u^y \, \partial_{y} A^z - u^y \partial_t \partial_{y} A^z \\
        \label{eq:StoreTemp}
                 & = -\partial_t u^x \, \partial_{x} A^z - u^x \partial_{x} (\partial_{t} A^z) - \partial_t u^y \, \partial_{y} A^z - u^y \partial_{y} (\partial_{t} A^z) \\
        \nonumber
                 & = -\partial_t u^x \, \partial_{x} A^z - u^x \, \partial_x (-u^x \partial_{x} A^z - u^y \partial_{y} A^z)
                     -\partial_t u^y \, \partial_{y} A^z - u^y \, \partial_y (-u^x \partial_{x} A^z - u^y \partial_{y} A^z) \\
        \nonumber
                 & = -\partial_t u^x \, \partial_{x} A^z - u^x (-\partial_{x}
                 u^x \, \partial_{x} A^z - u^x \partial_{x}^2 A^z  - \partial_{x} u^y \, \partial_{y} A^z - u^y \partial_x \partial_{y} A^z ) \\
        \nonumber
                 & \quad -\partial_t u^y \, \partial_{y} A^z - u^y (-\partial_{y} u^x \, \partial_{x} A^z - u^x \partial_x \partial_{y} A^z  - \partial_{y} u^y \, \partial_{y} A^z - u^y \partial_{y}^2 A^z ),
    \end{align}
    where
    \begin{equation}
        \label{eq:u1t}
        \partial_t u^x =  \partial_t \left(\frac{\rho u^x}{\rho}\right)
              = \frac{ \partial_t (\rho u^x) \rho - (\rho u^x) \partial_t \rho}{\rho^2},
    \end{equation}
    and
    \begin{equation}
        \label{eq:u2t}
        \partial_t u^y  = \partial_t \left(\frac{\rho u^y}{\rho}\right)
              = \frac{ \partial_t (\rho u^y) \rho - (\rho u^y)  \partial_t \rho}{\rho^2},
    \end{equation}
    with $\partial_t (\rho u^x)$, $\partial_t (\rho u^y)$, and $\partial_t \rho$ converted into spatial derivatives by way of~\eqref{eq:MHD}. 
\newtext{Note that we do not find it necessary to use a FD WENO discretization for $\rho_t$.}
    There are a total of 49 distinct spatial derivatives that need to be
    approximated in {\eqref{eq:StoreTemp}--\eqref{eq:u2t}}.  Similar expressions exist for $\partial_t^3 A^z$.  
    \newtext{These} are done by using the central differencing formulae
    \begin{align}
	    \label{eq:FirstOfSpatialDerivatives}
       \partial_x U_{i, j} &= \frac{- U_{i-1, j} + U_{i+1, j} }{2\Delta x},    \\
       \partial_{x}^2 U_{i, j} &= \frac{U_{i-1, j} - 2 U_{i, j} + U_{i+1, j}}{\Delta x^2}, \\
       \partial_{x} \partial_{y} U_{i, j} &= \frac{U_{i-1, j-1} - U_{i-1, j+1} - U_{i+1, j-1} + U_{i+1, j+1}}{4\Delta x \Delta y}, \\
       \partial_{x}^3 U_{ i, j} &= \frac{-U_{i-2, j} + 2 U_{i-1, j} - 2 U_{i+1, j} + U_{i+2, j}}{2\Delta x^3}, \\
       \label{eq:LastOfSpatialDerivatives}
       \partial_{x}^2 \partial_y U_{i, j} &= \frac{2 U_{i, j-1} - 2 U_{i, j+1} - U_{i-1, j-1} + U_{i-1, j+1} - U_{i+1, j-1} + U_{i+1, j+1}}{2\Delta x^2 \Delta y},
    \end{align}
    and similar ones for the $y$-, $yy$-, $xyy$- and $yyy$-derivatives, where $U$ is any of $\rho$, $\rho u^x$, $\rho u^y$, $\rho u^z$, $\En$, $B^x$, $B^y$, $B^z$, or $A^z$.
 \end{enumerate}
  
\newtext{  
\begin{remark}
In the current work, the magnetic field correction in Eqn. \eqref{eq:BCorrection2D} is discretized to fourth-order accuracy with central difference formulas.  For example,
we define the $y$-derivative of $A^z$ to be
\begin{equation}
	B^x_{i,j} = \partial_y A^z \approx \frac{ A^z_{i,j-2} - 8 A^z_{i,j-1} + 8 A^z_{i,j+1} - A^z_{i, j+2 } }{12 \Delta y},
\end{equation}
and others similarly.
Therefore in the discretization of equations~\eqref{eq:MHD} and~\eqref{eq:AQuasilinear}, we aim for fourth-order accuracy in space.  For the fluid 
equations in~\eqref{eq:MHD}, we use the fifth-order accurate spatial
discretization defined in ~\cite{Seal2014b}.  In the case
of~\eqref{eq:AQuasilinear}, Eqn.~\eqref{eq:A3Taylor} is a fifth-order
approximation to $\partial_t  A^z(t^n, x_i, y_j)$ since the
reconstructions~\eqref{eq:ReconstructA3SpatialDerivatives} give, for example, 
\begin{equation}
	\partial_x A^{z+}_{ i,j} =  \partial_x A^{z}(t^n, x_i, y_j) + \bigO(\Delta x^5),
\end{equation}
whereas Eqns.~{\eqref{eq:FirstOfSpatialDerivatives}--\eqref{eq:LastOfSpatialDerivatives} are chosen so that Eqn.~\eqref{eq:A3Taylor}} is fourth-order accurate in space when $\Delta t = \bigO(\Delta x)$.  This fourth-order accuracy is verified in our numerical results.
\end{remark}
}

\newtext{
\begin{remark}
In each dimension, the stencil needed to compute $\mathbf{B}$ from $\mathbf{A}$ is 5 points wide, and an inspection of~ \eqref{eq:ReconstructA3SpatialDerivatives} and~{\eqref{eq:FirstOfSpatialDerivatives}--\eqref{eq:LastOfSpatialDerivatives}} indicates that the stencil needed to compute $\mathbf{A}$ is 7 points wide.  This results in a stencil that is 11 points wide in each dimension.  A more careful analysis of~{\eqref{eq:FirstOfSpatialDerivatives}--\eqref{eq:LastOfSpatialDerivatives}} shows that this stencil is indeed contained within the stencil given by the PIF-WENO discretization of Eqn.~\eqref{eq:2DHyperbolicConservationLaw} (which is sketched in Fig.~2 of~\cite{Seal2014b}).  We present a sketch of the stencil required to discretize the magnetic potential in Fig.~\ref{fig:stencil}.
\end{remark}
}

    
\begin{figure}
\begin{center}
                \includegraphics[width=0.8\textwidth]{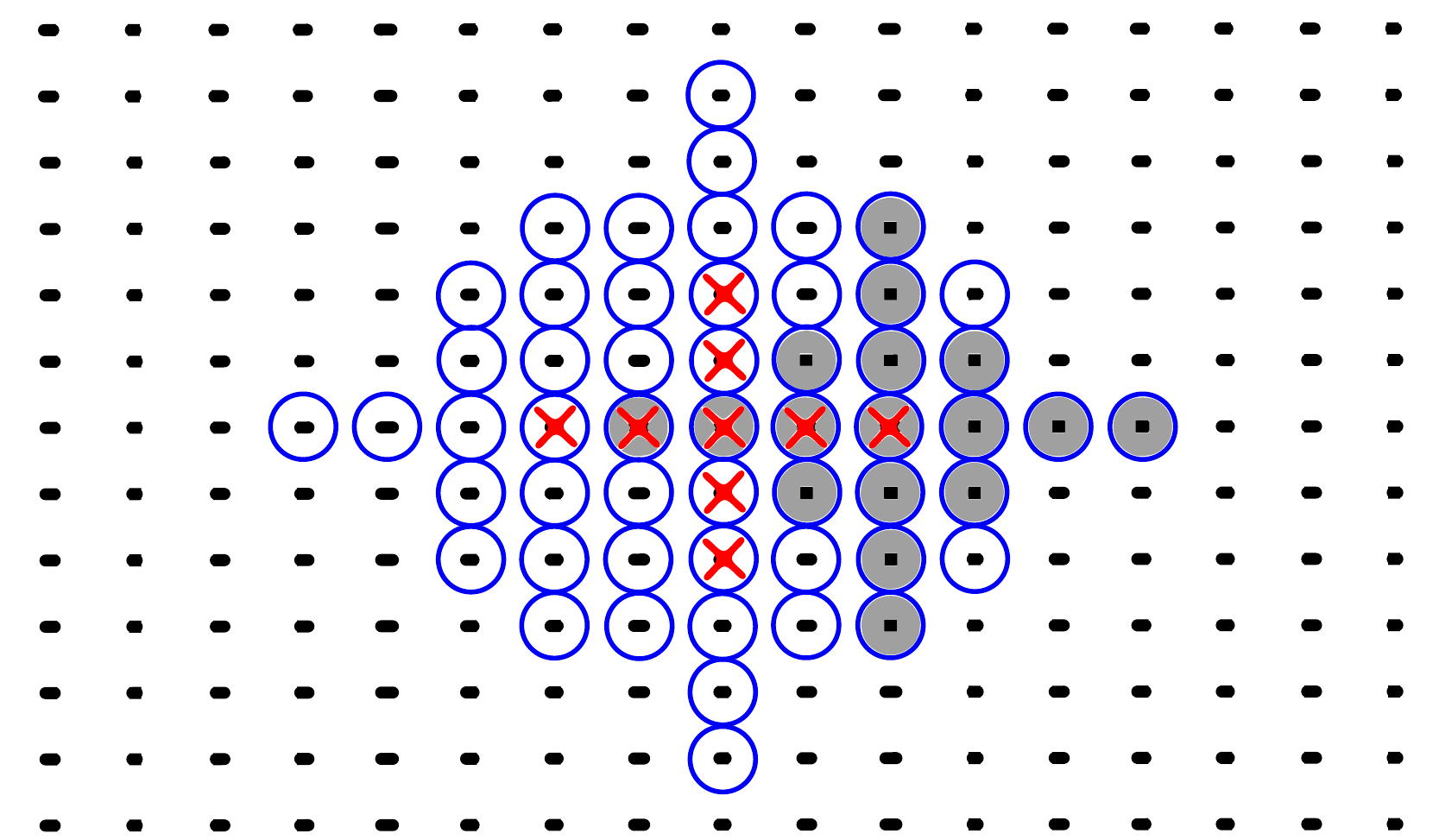}
\caption{\newtext{
                    Stencil needed for magnetic potential.  
                    The values of $A^z$ at each ``$\times$'' (save the centermost value) is needed to correct the value of $\Bvec$ at the center.  The shaded points are needed in 
                    Eqns.~\eqref{eq:ReconstructA3SpatialDerivatives} and~{\eqref{eq:FirstOfSpatialDerivatives}--\eqref{eq:LastOfSpatialDerivatives}} to compute the value of $A^z$ at the rightmost 
                    ``$\times$'' point.  Note that this stencil is contained in the stencil shown in~\cite{Seal2014b}, Fig.~2. }
\label{fig:stencil} }
\end{center}
\end{figure}
    
We note that our discretization differs from that found in~\cite{Qiu2007} in
that we do not store any lower temporal derivatives for use in computation of
higher temporal derivatives.  That is, the method proposed in~\cite{Qiu2007} stores the values of $\partial_t  A^z$ computed in Equation~\eqref{eq:StoreTemp}, and makes use of the central differences of $\partial_t  A^z$. \newtext{Although} that approach saves the trouble of expanding Eqn. \eqref{eq:LastOfSpatialDerivatives}, 
our approach has the following advantages that are in line with our goals:
\begin{description}
        \item[Minimal storage.]  Our approach avoids the necessity of storing
        the lower temporal derivatives \newtext{(at the expense of a more
        complicated code).}  Note here that it would be not only $\partial_{t}
        A^{z}$ and $\partial_{tt} A^{z}$, but also the temporal derivatives of
        $\rho$, $\rho u^x$, $\rho u^y$, $\rho u^z$, $\En$, $B^x$, $B^y$, and
        $B^z$ that must be stored if the approach in~\cite{Qiu2007} were
        \newtext{to be} taken.  This would lead to a formidable amount of temporary storage
        when applied to the magnetic potential evolution equation in 3D;
        \item[Smaller stencils.]  \newtext{The method in~\cite{Qiu2007} would
        require a stencil that is 15 points wide in each dimension in order to
        provide the necessary $\Avec$ values for correcting the magnetic
        field.   Our method only needs a stencil that is 11 points wide.}
        This \newtext{somewhat} simplifies the future work of embedding the
        scheme into an AMR framework\newtext{, but the biggest improvement is
        the reduction of temporary storage.}
    \end{description}

\subsection{The 3D magnetic potential equation}
The 3D magnetic potential equation~{\eqref{eq:AQuasilinear}--\eqref{eq:NMatrices}} is weakly hyperbolic, and therefore special attention is needed.  This can be treated by introducing artificial resistivity terms into the system~\cite{Christlieb2014,Helzel2011,Helzel2013}.  To illustrate the technique, we consider the first row of the system which is
\begin{equation}
    \label{eq:A1Equation}
    \partial_{t} A^x - u^y \partial_{x} A^y - u^z \partial_{x} A^z + u^y \partial_{y} A^x + u^z \partial_{z} A^x = 0.
\end{equation}
A discretization of $\partial_{t} A^{x}$ in the spirit of~\eqref{eq:A3t} would give the following numerically problematic formulation
\begin{align}
    \nonumber
    \partial_t A^{x}_{i, j, k}     
        & = -u^y_{i, j, k}\left(\frac{\partial_y A^{x-}_{i, j, k}  + \partial_y A^{x+}_{i, j, k} }{2}\right)
        -u^z_{i, j, k}\left(\frac{\partial_z A^{x-}_{i, j, k} + \partial_z A^{x+}_{i, j, k} }{2}\right) \\
    \nonumber
        & \quad +\alpha^y \left(\frac{\partial_y A^{x+}_{i, j, k}  - \partial_y A^{x-}_{i, j, k} }{2}\right)
            +\alpha^z \left(\frac{\partial_z A^{x+}_{i, j, k} - \partial_z A^{x-}_{i, j, k}}{2}\right) \\
    \label{eq:A1tNoArtificialViscosity}
        & \quad +u^y_{i, j, k}\left(\frac{\partial_{x} A^{y-}_{i, j, k} + \partial_{x} A^{y+}_{i, j, k}}{2}\right)
                +u^z_{i, j, k}\left(\frac{\partial_{x} A^{z-}_{i, j, k} + \partial_{x} A^{z+}_{i, j, k}}{2}\right)              
\end{align}
where
\begin{equation}
    \alpha^y = \max_{i, j, k} \lvert u^y_{i, j, k} \rvert
    \quad\text{and}\quad
    \alpha^z = \max_{i, j, k} \lvert u^z_{i, j, k} \rvert,
\end{equation}
and $\partial_{y} A^{x-}_{i, j, k}$, $\partial_{y} A^{x+}_{i, j, k}$, $\partial_{z} A^{x-}_{i, j, k}$, $\partial_{z} A^{x+}_{i, j, k}$, $A^{2-}_{x\; i, j, k}$, $A^{2+}_{x\; i, j, k}$, $A^{3-}_{x\; i, j, k}$, and $A^{3+}_{x\; i, j, k}$ are reconstructed in a manner similar to~\eqref{eq:ReconstructA3SpatialDerivatives}.  The problem with~\eqref{eq:A1tNoArtificialViscosity} is that this formulation lacks numerical resistivity in the $x$-direction.  

\newtext{As is the case in~\cite{Christlieb2014}, we find that the
\newtext{addition} of an artificial resistivity term
to~\eqref{eq:A1tNoArtificialViscosity} yields satisfactory numerical results.}
With the artificial resistivity term, the evolution equation~\eqref{eq:A1Equation} becomes
\begin{equation}
    \partial_{t} A^x - u^y \partial_{x} A^y - u^z \partial_{x} A^z + u^y \partial_{y} A^x + u^z \partial_{z} A^x
    = \epsilon^x \partial_{x}^2 A^x,
\end{equation}
where $\epsilon^x$ ideally satisfies
\begin{equation}
    \epsilon^x = \bigO(\Delta x^6), \text{when $\partial_{x} A^x$ is smooth},
\end{equation}
and
\begin{equation}
    \epsilon^x = \bigO(\Delta x), \text{when $\partial_{x} A^x$ is non-smooth}.
\end{equation}

We define the artificial resistivity $\epsilon^x$ in the following manner.  
Define a smoothness indicator~$\gamma^x$ for~$\partial_{x} A^x$ as follows:
\begin{equation}
    \gamma^x_{i, j, k} = \left| \frac{a^-}{a^- + a^+} - \frac{1}{2} \right|,
\end{equation}
where
\begin{equation}
    a^- = \left(\epsilon + (\Delta x \partial_{x} A^{x-}_{i, j, k})^2\right)^{-2}
    \quad \text{and} \quad
    a^+ = \left(\epsilon + (\Delta x \partial_{x} A^{x+}_{i, j, k})^2\right)^{-2}.
\end{equation}
\newtext{Here, $\epsilon$ is a small positive number introduced to avoid dividing by 
	a number close to 0 when the potential is smooth
	($\epsilon = 10^{-8}$ in all our numerical simulations).}
Now we define $\epsilon^x$ to be
\begin{equation}
    \label{eq:epsilon1}
    \epsilon^x = 2\nu \gamma^x \frac{\Delta x^2}{\Delta t},
\end{equation}
where $\nu$ is a positive constant that controls the magnitude of the
artificial resistivity.   
For a more detailed discussion of the weak hyperbolicity and the reasoning 
leading up to~\eqref{eq:epsilon1}, \newtext{we refer the reader to~\cite{Helzel2011,Christlieb2014}}.
\newtext{In \cite{Helzel2011}, it is shown that 
$\nu$ has to be in the range of $[0, 0.5]$ to maintain the stability up to CFL
one for their finite volume method.
Through numerical experimentation, we find that $\nu$ in the vicinity of
$0.01$ is sufficient to control potential oscillations in the magnetic field.}

We thus use the following formulation as our discretization of~\eqref{eq:A1Equation}
\begin{align}
    \nonumber
    \partial_t A^{x}_{i, j, k}     
        & = -u^y_{i, j, k}\left(\frac{\partial_y A^{x-}_{i, j, k} + \partial_y A^{x+}_{i, j, k}}{2}\right)
        -u^z_{i, j, k}\left(\frac{\partial_z A^{x-}_{i, j, k} + \partial_z A^{x+}_{i, j, k} }{2}\right) \\
    \nonumber
        & \quad +\alpha^y \left(\frac{\partial_y A^{x+}_{i, j, k}  - \partial_y A^{x-}_{i, j, k} }{2}\right)
            +\alpha^z \left(\frac{\partial_z A^{x+}_{i, j, k} - \partial_z A^{x-}_{i, j, k}}{2}\right) \\
        \nonumber
        & \quad +u^y_{i, j, k}\left(\frac{\partial_x A^{y-}_{i, j, k} + \partial_x A^{y+}_{i, j, k}}{2}\right)
                +u^z_{i, j, k}\left(\frac{\partial_x A^{z-}_{i, j, k} + \partial_x A^{z+}_{i, j, k}}{2}\right)  \\      
        & \quad +2 \nu \gamma^x_{i, j, k} \left(\frac{A^x_{i-1, j, k} - 2 A^x_{i, j, k} + A^x_{i+1, j, k}}{\Delta t}\right). 
\end{align}
The discretization of $\partial_{t} A^{y}$ and $\partial_t  A^z$ are similar and we omit them for brevity.

For our Lax-Wendroff formulation, it remains to discretize the higher temporal
derivatives of the components of $\Avec$.  We find it suffices to apply the
same techniques as in Section~\ref{sec:2DAEquation} for $\partial_{t}^2 A^{z}$
and $\partial_t^3 A^z$.  Namely, we convert all the temporal derivatives to
spatial derivatives via the Cauchy-Kovalevskaya procedure, and approximate the
\newtext{resulting}
spatial derivatives with central differences.  We note that the artificial
resistivity term is only added to the first temporal derivative.

\section{Positivity preservation}
\label{sec:PP}

\newtext{The scheme presented thus far can be applied to a large class of problems.  However, for problems where the plasma density or pressure
are near zero, Gibb's phenomenon can cause these values to become negative, and hence the numerical simulation will instantly fail.  As a final ingredient to the solver,
we introduce an additional option for retaining positivity of the solution.  Given the lack of number of degrees of freedom, this limiter comes at the expense of energy conservation, but these regions only occur in small areas where the density or pressure become negative.}

A positivity-preserving scheme can be \newtext{constructed} 
by modifying the fluxes $\hat{\mathbf{F}}$ and $\hat{\mathbf{G}}$ in~\eqref{eq:2DPIFWENOUpdate}.  
%
Let $\hat{\mathbf{f}}_{i+1/2,j}$ and $\hat{\mathbf{g}}_{i,j+1/2}$ be the (global) Lax-Friedrichs fluxes defined by
\begin{equation}
    \label{eq:LaxFriedrichsFluxes}
    \begin{aligned}
        \hat{\mathbf{f}}_{i+1/2,j}
        &= \frac{1}{2} (\mathbf{f}(\mathbf{q}^n_{i+1, j} ) + \mathbf{f}(\mathbf{q}^n_{i, j}) - \alpha^x (\mathbf{q}^n_{i+1, j} - \mathbf{q}^n_{i, j})), \\
        \hat{\mathbf{g}}_{i, j+1/2}
        &= \frac{1}{2} (\mathbf{g}(\mathbf{q}^n_{i, j+1} ) + \mathbf{g}(\mathbf{q}^n_{i, j}) - \alpha^y (\mathbf{q}^n_{i, j+1} - \mathbf{q}^n_{i, j})),
    \end{aligned}
\end{equation}
where $\alpha^x$ and $\alpha^y$ are the maximal wave speeds in the $x$ and $y$ directions, respectively.   
\newtext{This type of formulation is commonly referred to as 
Lax-Friedrich's flux splitting in the ENO/WENO literature \cite{Shu1988,Shu1989,Jiang1999}.}  

The update of $\mathbf{q}$ using these Lax-Friedrichs fluxes is 
\begin{equation}
	\label{eq:qLF}
	\mathbf{q}^{\text{LF}}_{i, j} = 
    \mathbf{q}^{n}_{i, j} - \frac{\Delta t}{\Delta x} \left(\hat{\mathbf{f}}^{n}_{i+1/2,j} - \hat{\mathbf{f}}^{n}_{i-1/2,j}\right) - \frac{\Delta t}{\Delta y} \left(\hat{\mathbf{g}}^{n}_{i, j+1/2} - \hat{\mathbf{g}}^{n}_{i,j-1/2}\right).
\end{equation}
For an 8-component state vector $\mathbf{q} = (\rho, \rho \uvec, \En, \Bvec)$, we also introduce the notation $\rho(\mathbf{q})$ and $p(\mathbf{q})$ to represent the density and the thermal pressure of $\mathbf{q}$.  Also, let $\epsilon_{\rho}$ and $\epsilon_{p}$ be small positive numbers.
  
The following claim was conjectured in \cite{Cheng2013}.
\begin{claim}
    \label{claim:LaxFriedrichsPreservesPositivity}
    If $\rho(\mathbf{q}^{n}_{i, j}) > \epsilon_{\rho}$ and $p(\mathbf{q}^{n}_{i, j}) > \epsilon_p$ for all $i$, $j$,
    and the CFL number is less than or equal to $0.5$, we then have
    $\rho(\mathbf{q}^{\text{LF}}_{i, j}) > \epsilon_{\rho}$ and 
    $p(\mathbf{q}^{\text{LF}}_{i, j}) > \epsilon_{p}$ for all $i$, $j$.
\end{claim}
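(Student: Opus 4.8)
The plan is to write the first-order global Lax--Friedrichs update \eqref{eq:qLF} as a convex combination of elementary ``half-step'' states and then invoke convexity of the set of admissible states. Abbreviating $\lambda_x = \Delta t/\Delta x$ and $\lambda_y = \Delta t/\Delta y$, substituting \eqref{eq:LaxFriedrichsFluxes} into \eqref{eq:qLF} and regrouping gives
\begin{align*}
    \mathbf{q}^{\text{LF}}_{i,j}
    &= \left(1 - \lambda_x\alpha^x - \lambda_y\alpha^y\right)\mathbf{q}^n_{i,j}
     + \frac{\lambda_x\alpha^x}{2}\left(\mathbf{q}^n_{i+1,j} - \tfrac{1}{\alpha^x}\mathbf{f}(\mathbf{q}^n_{i+1,j})\right)
     + \frac{\lambda_x\alpha^x}{2}\left(\mathbf{q}^n_{i-1,j} + \tfrac{1}{\alpha^x}\mathbf{f}(\mathbf{q}^n_{i-1,j})\right) \\
    &\quad
     + \frac{\lambda_y\alpha^y}{2}\left(\mathbf{q}^n_{i,j+1} - \tfrac{1}{\alpha^y}\mathbf{g}(\mathbf{q}^n_{i,j+1})\right)
     + \frac{\lambda_y\alpha^y}{2}\left(\mathbf{q}^n_{i,j-1} + \tfrac{1}{\alpha^y}\mathbf{g}(\mathbf{q}^n_{i,j-1})\right).
\end{align*}
The five coefficients sum to one and are all nonnegative exactly when $\lambda_x\alpha^x + \lambda_y\alpha^y \le 1$; since the CFL number is $\max(\lambda_x\alpha^x, \lambda_y\alpha^y)$, the hypothesis that it is at most $0.5$ guarantees this. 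Hence $\mathbf{q}^{\text{LF}}_{i,j}$ is a genuine convex combination of $\mathbf{q}^n_{i,j}$ and the four half-step states $\mathbf{q} \mp \tfrac{1}{\alpha^x}\mathbf{f}(\mathbf{q})$, $\mathbf{q} \mp \tfrac{1}{\alpha^y}\mathbf{g}(\mathbf{q})$ formed from the neighboring grid values.

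Next I would establish that the admissible region
\[
    G = \left\{\mathbf{q} = (\rho, \rho\uvec, \En, \Bvec) : \rho > \epsilon_\rho,\ p(\mathbf{q}) > \epsilon_p\right\}
\]
is convex. The density is linear in $\mathbf{q}$, so $\{\rho > \epsilon_\rho\}$ is a half-space. Writing the momentum as $\mathbf{m} = \rho\uvec$ and using \eqref{eq:EquationOfState},
\[
    p(\mathbf{q}) = (\gamma-1)\left(\En - \frac{\lVert \mathbf{m}\rVert^2}{2\rho} - \frac{\lVert\Bvec\rVert^2}{2}\right),
\]
which is concave in $(\rho, \mathbf{m}, \En, \Bvec)$ on $\{\rho > 0\}$, since $(\rho,\mathbf{m}) \mapsto \lVert\mathbf{m}\rVert^2/(2\rho)$ is jointly convex (it is the perspective of $\tfrac12\lVert\cdot\rVert^2$) and $\lVert\Bvec\rVert^2/2$ is convex. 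Thus $\{p > \epsilon_p\}$ is a superlevel set of a concave function and $G$ is convex. Consequently, it suffices to show that the center state and each of the four half-step states belong to $G$, after which the conclusion $\mathbf{q}^{\text{LF}}_{i,j} \in G$ is immediate.

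The density bound for the half-step states is elementary: the density component of $\mathbf{f}$ is the momentum $m^x = \rho u^x$, so $\rho(\mathbf{q} \mp \tfrac{1}{\alpha^x}\mathbf{f}(\mathbf{q})) = \rho\,(1 \mp u^x/\alpha^x) > 0$ because $\alpha^x$, the maximal $x$-wave speed, strictly dominates $\lvert u^x\rvert$ (and similarly in $y$). The substantive and conjectural ingredient is positivity of the pressure of the half-step states, namely
\[
    p\!\left(\mathbf{q} \mp \tfrac{1}{\alpha^x}\mathbf{f}(\mathbf{q})\right) > \epsilon_p
    \quad\text{and}\quad
    p\!\left(\mathbf{q} \mp \tfrac{1}{\alpha^y}\mathbf{g}(\mathbf{q})\right) > \epsilon_p,
\]
for every admissible $\mathbf{q}$.

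The hard part will be this pressure estimate. In contrast with the Euler case, the MHD flux carries the magnetic stress $-\Bvec\otimes\Bvec$ and the magnetic pressure $\tfrac12\lVert\Bvec\rVert^2$ in the momentum rows and the term $-\Bvec(\uvec\cdot\Bvec)$ in the energy row, so after inserting $\mathbf{q} \mp \tfrac{1}{\alpha^x}\mathbf{f}(\mathbf{q})$ into the concave pressure functional one is left with a quadratic-over-linear expression in the velocity and magnetic components whose sign is not obvious. My plan would be to expand this expression and bound it from below using the defining property of $\alpha^x$, namely that it dominates the fast magnetosonic speed $\lvert u^x\rvert + c_f$; the target inequality should reduce, via completion of squares, to showing that this choice of $\alpha^x$ renders the induced ``numerical'' internal energy nonnegative, exactly as in the Euler estimate $p(\mathbf{q}\mp\tfrac{1}{\alpha}\mathbf{f}(\mathbf{q})) \ge 0$ for $\alpha \ge \lvert u\rvert + c$. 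Verifying that the fast speed is large enough to absorb all of the magnetic cross terms is the genuine obstruction, and is presumably why~\cite{Cheng2013} left the statement as a conjecture.
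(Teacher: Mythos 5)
The first thing to note is that the paper does not prove this claim at all: it is stated explicitly as a conjecture from Cheng et al., ``verified'' there only by evaluating the inequality on a large number of random states, and the positivity limiter of Section~\ref{sec:PP} simply \emph{assumes} it. So there is no paper proof to compare against, and any complete proof you produced would go beyond what the authors provide. Your proposal does not close that gap: you correctly carry out the standard reduction (write $\mathbf{q}^{\text{LF}}_{i,j}$ as a convex combination of the center state and the four ``half-step'' states $\mathbf{q} \mp \mathbf{f}(\mathbf{q})/\alpha^x$, $\mathbf{q} \mp \mathbf{g}(\mathbf{q})/\alpha^y$, and use convexity of the admissible set), but the step you defer --- positivity of the pressure of those half-step states for the ideal MHD flux --- is precisely the unproven content of the conjecture. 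For Euler this estimate is classical; for MHD the magnetic stress terms in the momentum and energy fluxes prevent the completion-of-squares argument from going through with $\alpha^x \ge |u^x| + c_f$, and no proof is known. What you have is therefore an honest reduction of the claim to an open sub-claim, not a proof, and you should say so rather than present the expansion-and-bounding step as a routine calculation still to be written out.

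Two smaller points. First, the claim asserts the \emph{thresholded} bounds $\rho > \epsilon_\rho$ and $p > \epsilon_p$, not mere positivity. A convex combination of states each satisfying $\rho > \epsilon_\rho$ does satisfy $\rho > \epsilon_\rho$, but your half-step states have density $\rho\,(1 \mp u^x/\alpha^x)$, which can fall below $\epsilon_\rho$ even when $\rho > \epsilon_\rho$; the same issue affects the pressure. So even granting positivity of the intermediate states, the argument as structured delivers $\rho(\mathbf{q}^{\text{LF}}_{i,j}) > 0$ rather than $> \epsilon_\rho$ without a further accounting of how the weights on the neighboring densities sum. Second, your convexity argument for the admissible set (density linear in $\mathbf{q}$, pressure concave because $\lVert \mathbf{m}\rVert^2/(2\rho)$ is the perspective of a convex function) is sound and is essentially the same observation the paper records in its second Claim, so that portion is consistent with the paper.
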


Though not proven, this claim is verified in \cite{Cheng2013} using a fairly large number of random values of~$\mathbf{q}$.  Our positivity limiter assumes this claim is true.  However, as is noted in \cite{Cheng2013}, if a different flux can be found such that it satisfies a property similar to that of the Lax-Friedrichs fluxes stated in Claim~\ref{claim:LaxFriedrichsPreservesPositivity}, we can then use \newtext{these} different fluxes in place of $\hat{\mathbf{f}}_{i+1/2,j}$ and $\hat{\mathbf{g}}_{i,j+1/2}$ in the construction of our positivity limiter.

The modified fluxes take the form
\begin{align}
    \label{eq:ModifiedFluxesFirst}
    \tilde{\mathbf{F}}_{i+1/2,j} &= \theta_{i+1/2,j} (\hat{\mathbf{F}}_{i+1/2, j} - \hat{\mathbf{f}}_{i+1/2, j}) + \hat{\mathbf{f}}_{i+1/2,j}, \\
    \label{eq:ModifiedFluxesLast}
    \tilde{\mathbf{G}}_{i,j+1/2} &= \theta_{i,j+1/2} (\hat{\mathbf{G}}_{i, j+1/2} - \hat{\mathbf{g}}_{i, j+1/2}) + \hat{\mathbf{g}}_{i,j+1/2}, 
\end{align}
where $\theta_{i+1/2,j}$ and $\theta_{i,j+1/2}$ are chosen such that
\begin{itemize}
    \item $0 \leq \theta_{i+1/2, j} \leq 1$, $0 \leq \theta_{i,j+1/2} \leq 1$,
    \item the update \eqref{eq:2DPIFWENOUpdate} with $\hat{\mathbf{F}}$ and $\hat{\mathbf{G}}$ replaced by the modified fluxes $\tilde{\mathbf{F}}$ and $\tilde{\mathbf{G}}$ defined in {\eqref{eq:ModifiedFluxesFirst}--\eqref{eq:ModifiedFluxesLast}} yields positive density and pressure, and
    \item while subject to the positivity requirement just stated, $\theta_{i+1/2, j}$ and $\theta_{i,j+1/2}$ should be as close to $1$ as possible, 
    so that the high-order fluxes are used in regions where violation of positivity is unlikely to happen.
\end{itemize}

Following \cite{Christlieb2015}, we choose each $\theta$ in a series of two steps:
\begin{enumerate}[\StepTheta (i)]
    \item \label{steptheta:one} For each $i$, $j$, find ``large'' candidate limiting parameters $\Lambda_{L,\,I_{i,j}}$, $\Lambda_{R,\,I_{i,j}}$, $\Lambda_{D,\,I_{i,j}}$, and 
    $\Lambda_{U,\,I_{i,j}} \in [0,1]$ such that for all 
\begin{equation}
    	(\theta_L, \theta_R, \theta_D, \theta_U) \in [0, \Lambda_{L,\,I_{i,j}}] \times [0, \Lambda_{R,\,I_{i,j}}] \times [0, \Lambda_{D,\,I_{i,j}}] \times [0, \Lambda_{U,\,I_{i,j}}],
\end{equation}
	the update defined by
    \begin{equation}
    \label{eq:qAsFunctionOfTheta}
    \begin{aligned}
        \mathbf{q}
        & := \mathbf{q}^n_{i,j}
        -\frac{\Delta t}{\Delta x} \left((\theta_R (\hat{\mathbf{F}}_{i+1/2, j} - \hat{\mathbf{f}}_{i+1/2, j}) + \hat{\mathbf{f}}_{i+1/2,j})
        -(\theta_L (\hat{\mathbf{F}}_{i-1/2, j} - \hat{\mathbf{f}}_{i-1/2, j}) + \hat{\mathbf{f}}_{i-1/2,j})
        \right) \\
        & \quad -\frac{\Delta t}{\Delta y} \left((\theta_U (\hat{\mathbf{G}}_{i, j+1/2} - \hat{\mathbf{f}}_{i, j+1/2}) + \hat{\mathbf{f}}_{i,j+1/2})
                -(\theta_D (\hat{\mathbf{F}}_{i, j-1/2} - \hat{\mathbf{f}}_{i, j-1/2}) + \hat{\mathbf{f}}_{i,j-1/2})
                \right)
    \end{aligned}
    \end{equation}
    satisfies $\rho(\mathbf{q}) > \epsilon_{\rho}$ and $p(\mathbf{q}) > \epsilon_p$;
    
    \item For each $i$, $j$, set $\theta_{i+1/2,j} := \min\{\Lambda_{R, I_{i,j}}, \Lambda_{L, I_{i+1,j}}\}$ and $\theta_{i, j+1/2} := \min\{\Lambda_{U, I_{i,j}}, \Lambda_{D, I_{i, j+1}}\}$.
\end{enumerate}
We note that with $\mathbf{q}^n_{i,j}$, $\hat{\mathbf{F}}$, $\hat{\mathbf{f}}$, $\hat{\mathbf{G}}$, and $\hat{\mathbf{g}}$ already computed, Equation~\eqref{eq:qAsFunctionOfTheta} expresses the update $\mathbf{q}$ as an affine function of $\theta_L$, $\theta_R$, $\theta_D$, $\theta_U$, which, by abuse of notation, is denoted by $\mathbf{q}(\theta_L, \theta_R, \theta_D, \theta_U)$.  The coefficients of the $\theta$'s in $\mathbf{q}(\theta_L, \theta_R, \theta_D, \theta_U)$ are denoted by $\mathbf{C}_L$, $\mathbf{C}_R$, $\mathbf{C}_D$, and $\mathbf{C}_U$.  Thus we have
\begin{equation}
\begin{aligned}
     &\mathbf{C}_L = \frac{\Delta t}{\Delta x} (\hat{\mathbf{F}}_{i-1/2, j} - \hat{\mathbf{f}}_{i-1/2, j}),
    &\mathbf{C}_R = -\frac{\Delta t}{\Delta x} (\hat{\mathbf{F}}_{i+1/2, j} - \hat{\mathbf{f}}_{i+1/2, j}), \\
    &\mathbf{C}_D = \frac{\Delta t}{\Delta y} (\hat{\mathbf{G}}_{i, j-1/2} - \hat{\mathbf{g}}_{i, j-1/2}),
    &\mathbf{C}_U = -\frac{\Delta t}{\Delta y} (\hat{\mathbf{G}}_{i, j+1/2} - \hat{\mathbf{g}}_{i, j+1/2}),
\end{aligned}
\end{equation}
and, with \eqref{eq:qLF} in mind, \newtext{the limited solution is}
\begin{equation}
	\mathbf{q}(\theta_L, \theta_R, \theta_D, \theta_U) = \mathbf{q}^{\text{LF}}_{i, j} + \mathbf{C}_L \theta_L + \mathbf{C}_R \theta_R + \mathbf{C}_D \theta_D + \mathbf{C}_U \theta_U.
\end{equation}
With this notation, \StepTheta(\ref{steptheta:one}) reduces to solving the following problem for each $i$, $j$:
\begin{problem}
\label{pr:Optimization}
Given constant real vectors $\mathbf{C}_L$, $\mathbf{C}_R$, $\mathbf{C}_D$ and $\mathbf{C}_U$, and constant 8-component state vector $\mathbf{q}^{\text{LF}}_{i,j}$ such that $\rho(\mathbf{q}^{\text{LF}}_{i,j}) > \epsilon_{\rho}$ and $p(\mathbf{q}^{\text{LF}}_{i,j}) > \epsilon_p$, find $(\Lambda_{L,\,I_{i,j}}, \Lambda_{R,\,I_{i,j}}, \Lambda_{D,\,I_{i,j}}, \Lambda_{U,\,I_{i,j}})$ in $[0,1]\times [0,1] \times [0,1]\times [0,1]$ such that for all $(\theta_L, \theta_R, \theta_D, \theta_U)$ in $[0, \Lambda_{L,\,I_{i,j}}] \times [0, \Lambda_{R,\,I_{i,j}}] \times [0, \Lambda_{D,\,I_{i,j}}] \times [0, \Lambda_{U,\,I_{i,j}}]$, the expression
\begin{equation}
    \mathbf{q}(\theta_L, \theta_R, \theta_D, \theta_U) = \mathbf{q}^{\text{LF}}_{i, j} + \mathbf{C}_L \theta_L + \mathbf{C}_R \theta_R + \mathbf{C}_D \theta_D + \mathbf{C}_U \theta_U
\end{equation}
satisfies
\begin{gather}
    \rho(\mathbf{q}(\theta_L, \theta_R, \theta_D, \theta_U)) > \epsilon_{\rho}\\
    \intertext{and}
    p(\mathbf{q}(\theta_L, \theta_R, \theta_D, \theta_U)) > \epsilon_{p}.
\end{gather}
The region $[0, \Lambda_{L,\,I_{i,j}}] \times [0, \Lambda_{R,\,I_{i,j}}] \times [0, \Lambda_{D,\,I_{i,j}}] \times [0, \Lambda_{U,\,I_{i,j}}]$ should be ``as big as possible''.
\end{problem}

We note the following fact.
\begin{claim}
	The sets $S_{\rho}$ and $S$ defined by
	\begin{align}
		S_{\rho} &= \{ (\theta_L, \theta_R, \theta_D, \theta_U) \in [0,1]^4
						\, \vert \,
						\rho(\mathbf{q}(\theta_L, \theta_R, \theta_D, \theta_U)) > \epsilon_{\rho} \} \\
		\intertext{and}
		S &= \{ (\theta_L, \theta_R, \theta_D, \theta_U) \in [0,1]^4
				\, \vert \,
				\rho(\mathbf{q}(\theta_L, \theta_R, \theta_D, \theta_U)) > \epsilon_{\rho}
				\text{ and }
				p(\mathbf{q}(\theta_L, \theta_R, \theta_D, \theta_U)) > \epsilon_{p}	 \}
	\end{align}
	are both convex.
\end{claim}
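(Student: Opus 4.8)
The plan is to exploit the fact that $\mathbf{q}(\theta_L,\theta_R,\theta_D,\theta_U)$ is an \emph{affine} function of the limiting parameters, so that convexity of both sets follows once we know that $\rho$ is a linear functional of the conserved variables and that the pressure $p$ is a \emph{concave} function of the conserved variables on the region where the density is positive. Writing $\boldsymbol{\theta} = (\theta_L,\theta_R,\theta_D,\theta_U)$, the map $\boldsymbol{\theta} \mapsto \mathbf{q}(\boldsymbol{\theta}) = \mathbf{q}^{\text{LF}}_{i, j} + \mathbf{C}_L \theta_L + \mathbf{C}_R \theta_R + \mathbf{C}_D \theta_D + \mathbf{C}_U \theta_U$ is affine, and $\rho(\mathbf{q})$ simply extracts the first component of $\mathbf{q}$, so $\boldsymbol{\theta}\mapsto \rho(\mathbf{q}(\boldsymbol{\theta}))$ is affine. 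Hence $S_{\rho}$ is the intersection of the open half-space $\{\rho(\mathbf{q}(\boldsymbol{\theta})) > \epsilon_{\rho}\}$ with the cube $[0,1]^4$, and is convex as an intersection of two convex sets.

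The substantive step is to show that the pressure is concave in the conserved variables. Letting $\mathbf{m} := \rho\uvec$ denote the momentum, the equation of state~\eqref{eq:EquationOfState} gives
\begin{equation*}
    p(\mathbf{q}) = (\gamma - 1)\left(\En - \frac{\lVert \mathbf{m}\rVert^2}{2\rho} - \frac{\lVert\Bvec\rVert^2}{2}\right).
\end{equation*}
Here $\En$ is a linear functional of $\mathbf{q}$ and $\tfrac{1}{2}\lVert\Bvec\rVert^2$ is convex, so the only real work is to show that the kinetic-energy term $(\mathbf{m},\rho)\mapsto \lVert\mathbf{m}\rVert^2/(2\rho)$ is convex on $\{\rho > 0\}$. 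This is the classical quadratic-over-linear (perspective) function; I would verify convexity either by noting that each scalar summand $m_k^2/(2\rho)$ has positive semidefinite Hessian $\begin{pmatrix} 1/\rho & -m_k/\rho^2 \\ -m_k/\rho^2 & m_k^2/\rho^3 \end{pmatrix}$ (trace $>0$, determinant $0$) and summing, or by invoking the standard convexity of the perspective function. It then follows that $p$ is, up to the positive factor $\gamma-1$, a linear term minus two convex terms, hence concave on $\{\rho > 0\}$. Composing with the affine map $\boldsymbol{\theta}\mapsto\mathbf{q}(\boldsymbol{\theta})$ preserves concavity, so $P(\boldsymbol{\theta}) := p(\mathbf{q}(\boldsymbol{\theta}))$ is concave on the convex set $\{\boldsymbol{\theta}\in[0,1]^4 : \rho(\mathbf{q}(\boldsymbol{\theta})) > 0\}$.

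Finally I would assemble the pieces directly. Given $\boldsymbol{\theta}^{(0)},\boldsymbol{\theta}^{(1)}\in S$ and $\lambda\in[0,1]$, the convex combination $\boldsymbol{\theta}^{(\lambda)} = (1-\lambda)\boldsymbol{\theta}^{(0)} + \lambda\boldsymbol{\theta}^{(1)}$ lies in $[0,1]^4$; affineness of $\rho\circ\mathbf{q}$ gives $\rho(\mathbf{q}(\boldsymbol{\theta}^{(\lambda)})) = (1-\lambda)\rho(\mathbf{q}(\boldsymbol{\theta}^{(0)})) + \lambda\rho(\mathbf{q}(\boldsymbol{\theta}^{(1)})) > \epsilon_{\rho} > 0$, so $\boldsymbol{\theta}^{(\lambda)}\in S_{\rho}$ and the whole segment stays in the domain where $P$ is concave. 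Concavity then yields $P(\boldsymbol{\theta}^{(\lambda)}) \ge (1-\lambda)P(\boldsymbol{\theta}^{(0)}) + \lambda P(\boldsymbol{\theta}^{(1)}) > \epsilon_{p}$, whence $\boldsymbol{\theta}^{(\lambda)}\in S$. The main obstacle is the concavity of $p$, which reduces entirely to the convexity of the kinetic-energy term $\lVert\mathbf{m}\rVert^2/(2\rho)$; the one subtlety to handle with care is that $p$ is only concave where $\rho > 0$, which is exactly why the density constraint must be carried along the segment before the pressure estimate can be applied.
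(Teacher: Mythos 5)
Your proposal is correct and follows essentially the same route as the paper: $S_{\rho}$ is the intersection of the cube with an open half-space because $\rho\circ\mathbf{q}$ is affine in the $\theta$'s, and $S$ is convex because $p$ is concave in the conserved variables wherever $\rho>0$, so $p\circ\mathbf{q}$ is concave along any segment that stays in $S_{\rho}$. The only difference is that the paper simply asserts the concavity of $p$ from the equation of state, whereas you supply the missing detail by verifying the convexity of the quadratic-over-linear term $\lVert\mathbf{m}\rVert^{2}/(2\rho)$ via its positive semidefinite Hessian; this makes your write-up slightly more complete than the paper's.
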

\begin{proof}
	Since $\rho$ is a linear function of $\mathbf{q}$ and $\mathbf{q}$ is a affine function of $(\theta_L, \theta_R, \theta_D, \theta_U)$, we see that $\rho(\mathbf{q}(\theta_L, \theta_R, \theta_D, \theta_U))$ is an affine function of $(\theta_L, \theta_R, \theta_D, \theta_U)$.  Thus $S_{\rho}$ is the part of $[0,1]^4$ that lies on one side of a hyperplane.  This shows $S_{\rho}$ is convex.
	
	To see the convexity of $S$, note that by the equation of state \ref{eq:EquationOfState}, the pressure $p$ is a concave function of the components of $\mathbf{q}$, whenever $\rho > 0$.  Combined with the fact that $\mathbf{q}(\theta_L, \theta_R, \theta_D, \theta_U)$ is an affine function of $(\theta_L, \theta_R, \theta_D, \theta_U)$,  we see that $p(\mathbf{q}(\theta_L, \theta_R, \theta_D, \theta_U))$ is a concave function of $(\theta_L, \theta_R, \theta_D, \theta_U)$, if $\rho(\mathbf{q}(\theta_L, \theta_R, \theta_D, \theta_U)) > 0$.  This shows the convexity of $S_p$.
\end{proof}

Note that Problem \ref{pr:Optimization} is not well-defined, since the notion of ``big'' is not defined.  The algorithm we are about to describe gives a solution that is satisfactory, in the sense that this algorithm yields a positivity-preserving limiter that behaves well in our numerical tests.

We now describe this algorithm.
The first step of this algorithm is to find a ``big'' rectangular subset $R_{\rho} := [0, \Lambda^{\rho}_{L}] \times [0, \Lambda^{\rho}_{R}] \times [0, \Lambda^{\rho}_{D}] \times [0, \Lambda^{\rho}_{U}]$ of $S_{\rho}$.  The $\Lambda^{\rho}$'s are computed by
\begin{equation}
	\Lambda^{\rho}_{\weirdI} =
	\begin{cases}
		\min \left\lbrace 1, \; \frac{\rho(\mathbf{q}^{\text{LF}}_{i, j}) - \epsilon_{\rho}}{\epsilon + \mathlarger{\mathlarger{\sum}}\limits_{\substack{\weirdJ, \\ C^{(1)}_{\weirdJ} < 0}}{\left| C^{(1)}_{\weirdJ} \right|}} \right\rbrace
		&\text{if $C^{(1)}_{\weirdI} < 0$,} \\
		1
		&\text{if $C^{(1)}_{\weirdI} \geq 0$,}
	\end{cases}
\end{equation}
where $\epsilon$ is a small fixed positive number ($10^{-12}$ in all our simulations) and the subscript letters $\weirdI$ and $\weirdJ$ take values in $L$, $R$, $D$, and $U$. \newtext{The value $C^{(1)}_\weirdI$ denotes the first component of the $\mathbf{C}_\weirdI$ vector.}

The second step is to shrink this rectangular subset $R_{\rho}$ to fit into $S_p$.  The vertices of $R_{\rho}$ are denoted by $\mathbf{A}^{k_L, k_R, k_D, k_U}$, where $k_{\weirdI} = 0\text{ or } 1$, such that the $\weirdI$-th component of $\mathbf{A}^{k_L, k_R, k_D, k_U}$ is
\begin{equation}
    A^{k_L, k_R, k_D, k_U}_{\weirdI} = 
    \begin{cases}
        \Lambda^{\rho}_{\weirdI} & \text{if $k_{\weirdI} = 1$,} \\
        0    & \text{if $k_{\weirdI} = 0$.}
    \end{cases}
\end{equation}
Now for each $(k_L, k_R, k_D, k_U)$, shrink $\mathbf{A}^{k_L, k_R, k_D, k_U}$
to get $\mathbf{B}^{k_L, k_R, k_D, k_U}$ in the following way.  If
$p(\mathbf{A}^{k_L, k_R, k_D, k_U}) \geq \epsilon_p$, set $\mathbf{B}^{k_L,
k_R, k_D, k_U} = \mathbf{A}^{k_L, k_R, k_D, k_U}$.  Otherwise, we solve for
the smallest positive $r$ such that $p(r \mathbf{A}^{k_L, k_R, k_D, k_U}) \geq
\epsilon_p$, and set $\mathbf{B}^{k_L, k_R, k_D, k_U} = r \mathbf{A}^{k_L,
k_R, k_D, k_U}$.  \newtext{In order to solve for this value, we apply a total
of 10 iterations of the bisection method. A more efficient (approximate) solver could 
easily replace this step (e.g., a single step of the method of false
position).}  Note that the different
vertices $\mathbf{A}^{k_L, k_R, k_D, k_U}$ are in general shrunk by different
factors $r$.  Now we set
\begin{equation}
    \label{eq:RectangularInsidePolygon}
    \Lambda_{\weirdI, \, I_{i,j}} = \min\limits_{\substack{(k_L, k_R, k_U, k_D), \\ k_{\weirdI = 1}}}{B}^{k_L, k_R, k_U, k_D}_{\weirdI},
\end{equation}
where the subscript $\weirdI$ indicates the $\weirdI$-th component.  Note that this is equivalent to finding a rectangular subset inside the convex polygon with vertices $\mathbf{B}^{k_L, k_R, k_D, k_U}$, $k_{\weirdI} = 0, 1$.

This completes the description of our positivity-preserving limiter in 2D.  The 3D case is similar.

\section{Numerical results}
\label{sec:NR}

In this section, we present the results of numerical simulations using the proposed method.  Unless otherwise specified, constrained transport is turned on, the gas constant $\gamma = 5/3$, the CFL number is $0.5$, and for 3D simulations the artificial viscosity coefficient is \newtext{$\nu = 0.01$}.

\subsection{Smooth \Alfven wave}
\label{sec:Alfven}

The smooth \Alfven wave problem is often used for convergence studies of
numerical schemes for ideal MHD
equations~\cite{Helzel2011,Rossmanith2006,Toth2000}.  This problem is a
one-dimensional problem (computed in multiple dimensions) that has a known
smooth solution.  In 1D, the initial conditions for this problem are
\begin{equation}
    \label{eq:1DAlfvenInit}
    \begin{split}
        &(\rho, u^x, u^y, u^z, u^z, p, B^x, B^y, B^z)(0, x) \\
        &\quad = (1, 0, 0.1\sin(2\pi x), 0.1\cos(2\pi x), 0.1, 1, 0.1\sin(2\pi x), 0.1\cos(2\pi x)).
    \end{split}
\end{equation}
\newtext{The exact solution to~\eqref{eq:1DAlfvenInit} propagates with the \Alfven speed that is unity (i.e., $\mathbf{q}(t,x) = \mathbf{q}(0,x+t)$).}
The 2D and 3D smooth \Alfven wave problems are obtained from the 1D problem by rotating the direction of wave propagation.

\subsubsection{Smooth \Alfven wave\newtext{: The 2D problem}}

The 2D version of the smooth \Alfven wave problem is obtained by rotating the
direction of propagation by an angle of $\phi$, so that the wave now
propagates in direction $\mathbf{n} = \langle -\cos \phi, -\sin \phi, 0
\rangle$.  Identical to ~\cite{Helzel2011,Christlieb2014}, the computational
domain we use is $[0, 1/\cos \phi] \times [0, 1/\sin \phi]$, \newtext{where $\phi = \tan^{-1}(0.5)$}. 
\newtext{Periodic boundary conditions are applied on all four sides.}

\newtext{For this problem, we present numerical results with and without the
energy correction.  In Table \ref{tab:2DAlfvenThird}, we observe the overall
third-order accuracy of the method, and in Table \ref{tab:2DAlfvenFour}, we
refine $\Delta t$ faster than the mesh spacing as well as run the solution to
a shorter final time in order to extract the spatial order of accuracy.  
For these test cases, we observe the predicted fourth-order accuracy in space. 
of convergence in space, third-order in time, and little difference between
the results obtained with and without the energy correction turned on.  When
the flag for the positivity-preserving limiter is turned on in the code, we
see identical results as without it, because this problem does not have
density or pressure that is near zero. 
The choice of $m_y = 2 m_x$ allows for $\Delta x = \Delta y$.  
}

\begin{table}
\begin{center}
\caption{2D smooth \Alfven wave.  Here, we show $L^{\infty}$-errors at a final
time of $t=1.0$  for the solution with and without the energy ``correction'' step.  
Left two columns have the energy correction turned off, and the rightmost
columns have the energy correction turned on. Because time is only discretized
to third-order accuracy, we observe the predicted third-order accuracy of the
solver here. \label{tab:2DAlfvenThird} }
{\footnotesize
\begin{tabular}{|l|l||l|l|l|l||l|l|l|l|}
\hline
Mesh & CFL & Error in $\mathbf{B}$ & Order & Error in $A^z$ & Order & Error in $\mathbf{B}$ & Order & Error in $A^z$ & Order\\
\hline\hline
$32 \times 64$ & 0.5 & $3.842 \times 10^{-5}$ & --- & $5.356 \times 10^{-6}$ & --- 
                     & $3.848 \times 10^{-5}$ & --- & $5.320 \times 10^{-6}$ & ---\\
$64 \times 128$ & 0.5 & $4.940 \times 10^{-6}$ & $2.96$ & $7.530 \times 10^{-7}$ & $2.83$
                      & $4.938 \times 10^{-6}$ & $2.96$ & $7.469 \times 10^{-7}$ & $2.83$\\
$128 \times 256$ & 0.5 & $6.324 \times 10^{-7}$ & $2.97$ & $9.697 \times 10^{-8}$ & $2.96$ 
                      & $6.318 \times 10^{-7}$ & $2.97$ & $9.628 \times 10^{-8}$ & $2.96$\\
$256 \times 512$ & 0.5 & $8.020 \times 10^{-8}$ & $2.98$ & $1.218 \times 10^{-8}$ & $2.99$
                       & $8.009 \times 10^{-8}$ & $2.98$ & $1.210 \times 10^{-8}$ & $2.99$
\\
\hline\hline
\end{tabular}
}
\end{center}
\end{table}

\begin{table}
\begin{center}
\caption{2D smooth \Alfven wave.  Here, we show $L^{\infty}$-errors at a short final
time of $t=0.01$  for the solution with and without the energy ``correction'' step.  
Left two columns have the energy correction turned off, and the rightmost
columns have the energy correction turned on.  Here, we refine $\Delta t$
faster than $\Delta x$ in order to expose the spatial order of accuracy of the
solver.  Because we only use a fourth-order accurate spatial discretization
for $\nabla \times {\bf A}$, we only observe fourth-order accuracy despite the
fact that the fluid variables are discretized to fifth-order accuracy.
\label{tab:2DAlfvenFour} }
{\footnotesize
\begin{tabular}{|l|l||l|l|l|l||l|l|l|l|}
\hline
Mesh & CFL & Error in $\mathbf{B}$ & Order & Error in $A^z$ & Order & Error in $\mathbf{B}$ & Order & Error in $A^z$ & Order\\
\hline\hline
$32 \times 64$ & 0.5 & $3.852 \times 10^{-6}$ & --- & $1.078 \times 10^{-7}$ & ---
                     & $3.852 \times 10^{-6}$ & --- & $1.078 \times 10^{-7}$ & ---\\
$64 \times 128$ & 0.25 & $2.356 \times 10^{-7}$ & $4.03$ & $8.121 \times 10^{-9}$ & $3.73$
                       & $2.356 \times 10^{-7}$ & $4.03$ & $8.121 \times 10^{-9}$ & $3.73$\\
$128 \times 256$ & 0.125 & $1.466 \times 10^{-8}$ & $4.01$ & $5.190 \times 10^{-10}$ & $3.97$
                         & $1.466 \times 10^{-8}$ & $4.01$ & $5.190 \times 10^{-10}$ & $3.97$\\
$256 \times 512$ & 0.0625 & $9.117 \times 10^{-10}$ & $4.01$ & $3.291 \times 10^{-11}$ & $3.98$
                          & $9.117 \times 10^{-10}$ & $4.01$ & $3.291 \times 10^{-11}$ & $3.98$
\\
\hline\hline
\end{tabular}
}
\end{center}
\end{table}

\subsubsection{Smooth \Alfven wave\newtext{: The 3D problem}}

The setup we use here is the same as that used in~\cite{Helzel2011}, Section 6.2.1.  The direction of propagation is
\begin{equation}
\mathbf{n} = \langle-\cos\phi\cos\theta, -\sin\phi\cos\theta, \sin\theta\rangle,
\end{equation}
and the computational domain is
\begin{equation}
\left[0, \frac{1}{\cos\phi\cos\theta}\right] \times
\left[0, \frac{1}{\sin\phi\cos\theta}\right] \times
\left[0, \frac{1}{\sin\theta}\right],
\end{equation}
\newtext{where $\phi = \theta = \tan^{-1} (0.5)$.}
\newtext{Periodic boundary conditions are imposed on all directions.}

\newtext{We again seek to numerically investigate the spatial and temporal
orders of accuracy, with and without the energy correction step.  The errors in $\Bvec$ and $\Avec$ are presented in {Tables
\ref{tab:3DAlfvenLong}}--\ref{tab:3DAlfvenShort}.  Here we choose 
$m_y = m_z = 2 m_x$ so that $\Delta x = \Delta y = \Delta z / \cos \theta$.  Similar to the 2D
case, we observe fourth-order of convergence in space, third-order in time,
and little difference between the results obtained with and without
the energy correction step turned on.}

\begin{table}
\begin{center}
\caption{3D smooth \Alfven wave.  In this table we show the
$L^{\infty}$-errors at a moderate time of $t=1.0$.  In the left
columns the positivity-preserving limiter is off, and the
the positivity-preserving limiter (and the energy correction step) is turned on
for the results in the right columns.  Because time is discretized to
third-order accuracy, the final method is formally only third-order accurate
in time.  In Table \ref{tab:3DAlfvenShort} we run the solver to a short final
time in order to expose the spatial order of accuracy.
\label{tab:3DAlfvenLong}}
{\footnotesize
\begin{tabular}{|l|l||l|l|l|l||l|l|l|l|}
\hline
Mesh & CFL & Error in $\Bvec$ & Order & Error in $\Avec$ & Order & Error in $\Bvec$ & Order & Error in $\Avec$ & Order\\
\hline\hline
$16 \times 32 \times 32$ & 0.5 & $4.784 \times 10^{-4}$ & --- & $5.116 \times 10^{-5}$ & ---
                               & $4.882 \times 10^{-4}$ & --- & $5.176 \times 10^{-5}$ & ---\\
$32 \times 64 \times 64$ & 0.5 & $2.452 \times 10^{-5}$ & $4.29$ & $3.181 \times 10^{-6}$ & $4.01$
                               & $2.485 \times 10^{-5}$ & $4.30$ & $3.191 \times 10^{-6}$ & $4.02$\\
$64 \times 128 \times 128$ & 0.5 & $3.093 \times 10^{-6}$ & $2.99$ & $4.612 \times 10^{-7}$ & $2.79$
                                 & $3.105 \times 10^{-6}$ & $3.00$ & $4.621 \times 10^{-7}$ & $2.79$\\
$128 \times 256 \times 256$ & 0.5 & $3.969 \times 10^{-7}$ & $2.96$ & $6.133 \times 10^{-8}$ & $2.91$
                                  & $3.977 \times 10^{-7}$ & $2.96$ & $6.147 \times 10^{-8}$ & $2.91$
\\
\hline\hline
\end{tabular}
}
\end{center}
\end{table}

\begin{table}
\begin{center}
\caption{3D smooth \Alfven wave.  Here, we show the $L^{\infty}$-errors at a
short final time of $t=0.01$.  In addition, we refine $\Delta t$ faster than
the mesh spacing in order to extract the spatial order of accuracy.  Left two
columns have the turned off, and the right two columns have the
positivity-preserving limiter (as well as the correction step) turned on. 
The results are almost identical.
\label{tab:3DAlfvenShort}}
{\footnotesize
\begin{tabular}{|l|l||l|l|l|l||l|l|l|l|}
\hline
Mesh & CFL & Error in $\Bvec$ & Order & Error in $\Avec$ & Order & Error in $\Bvec$ & Order & Error in $\Avec$ & Order\\
\hline\hline
$16 \times 32 \times 32$ & 0.5 & $6.752 \times 10^{-5}$ & --- & $5.715 \times 10^{-7}$ & ---
                               & $6.752 \times 10^{-5}$ & --- & $5.715 \times 10^{-7}$ & ---\\
$32 \times 64 \times 64$ & 0.25 & $4.280 \times 10^{-6}$ & $3.98$ & $3.856 \times 10^{-8}$ & $3.89$
                                & $4.280 \times 10^{-6}$ & $3.98$ & $3.856 \times 10^{-8}$ & $3.89$\\
$64 \times 128 \times 128$ & 0.125 & $2.666 \times 10^{-7}$ & $4.00$ & $2.613 \times 10^{-9}$ & $3.88$
                                   & $2.666 \times 10^{-7}$ & $4.00$ & $2.613 \times 10^{-9}$ & $3.88$\\
$128 \times 256 \times 256$ & 0.0625 & $1.652 \times 10^{-8}$ & $4.01$ & $1.711 \times 10^{-10}$ & $3.93$
                                     & $1.652 \times 10^{-8}$ & $4.01$ & $1.712 \times 10^{-10}$ & $3.93$
\\
\hline\hline
\end{tabular}
}
\end{center}
\end{table}

\subsection{2D rotated shock tube problem}

Similar to the smooth \Alfven problems, the rotated shock tube problem is a 1D
problem, with direction of wave propagation rotated a certain angle.  The
setup we use in the current work is the same as that in~\cite{Christlieb2014},
Section 7.2, which we repeat here \newtext{for completeness}.

The initial conditions consist of a shock
\begin{equation}
(\rho, u_{\perp}, u_{\parallel}, u^z, p, B_{\perp}, B_{\parallel}, B^z)
=
    \begin{cases}
        (1, -0.4, 0, 0, 1, 0.75, 1, 0) &
        \text{if $\xi<0$,} \\
        (0.2, -0.4, 0, 0, 0.1, 0.75, -1, 0) &
        \text{if $\xi\geq 0$,}
    \end{cases}
\end{equation}
where $\xi = x \cos \phi + y \sin \phi$, 
and $u_{\perp}$ and $B_{\perp}$ are vector components perpendicular to the shock interface, and $u_{\parallel}$ and $B_{\parallel}$  are the vector components parallel to the shock interface.  Namely
\begin{gather}
u^x = u_{\perp} \cos \phi - u_{\parallel} \sin \phi, \quad u^y = u_{\perp} \sin \phi + u_{\parallel} \cos \phi , \\
B^x = B_{\perp} \cos \phi - B_{\parallel} \sin \phi, \quad B^y = B_{\perp} \sin \phi + B_{\parallel} \cos \phi .
\end{gather}
The initial condition for magnetic potential is
\begin{equation}
A^z(0, x, y) = 
\begin{cases}
    0.75 \eta - \xi & \text{if $\xi\leq 0$, } \\
    0.75 \eta + \xi & \text{if $\xi > 0$, }
\end{cases}
\end{equation}
where $\eta = -x \sin \phi + y \cos \phi$.

The computational domain is $[-1.2, 1.2]\times[-1, 1]$ with a $180\times 150$
mesh.  The boundary conditions used are zeroth order extrapolation on the
conserved quantities and first-order extrapolation on the magnetic potential.
\newtext{That is, we set the conserved quantities at the ghost points to be
identical to the last value on the interior of the domain, and we define
values for the magnetic potential at ghost points through repeated
extrapolation of two point stencils starting with two interior points.}
On the top and bottom boundaries, the direction of extrapolation is parallel
to the shock interface.

\begin{figure}
\begin{center}
    \begin{tabular}{cc}
        \includegraphics[width=0.4\textwidth]{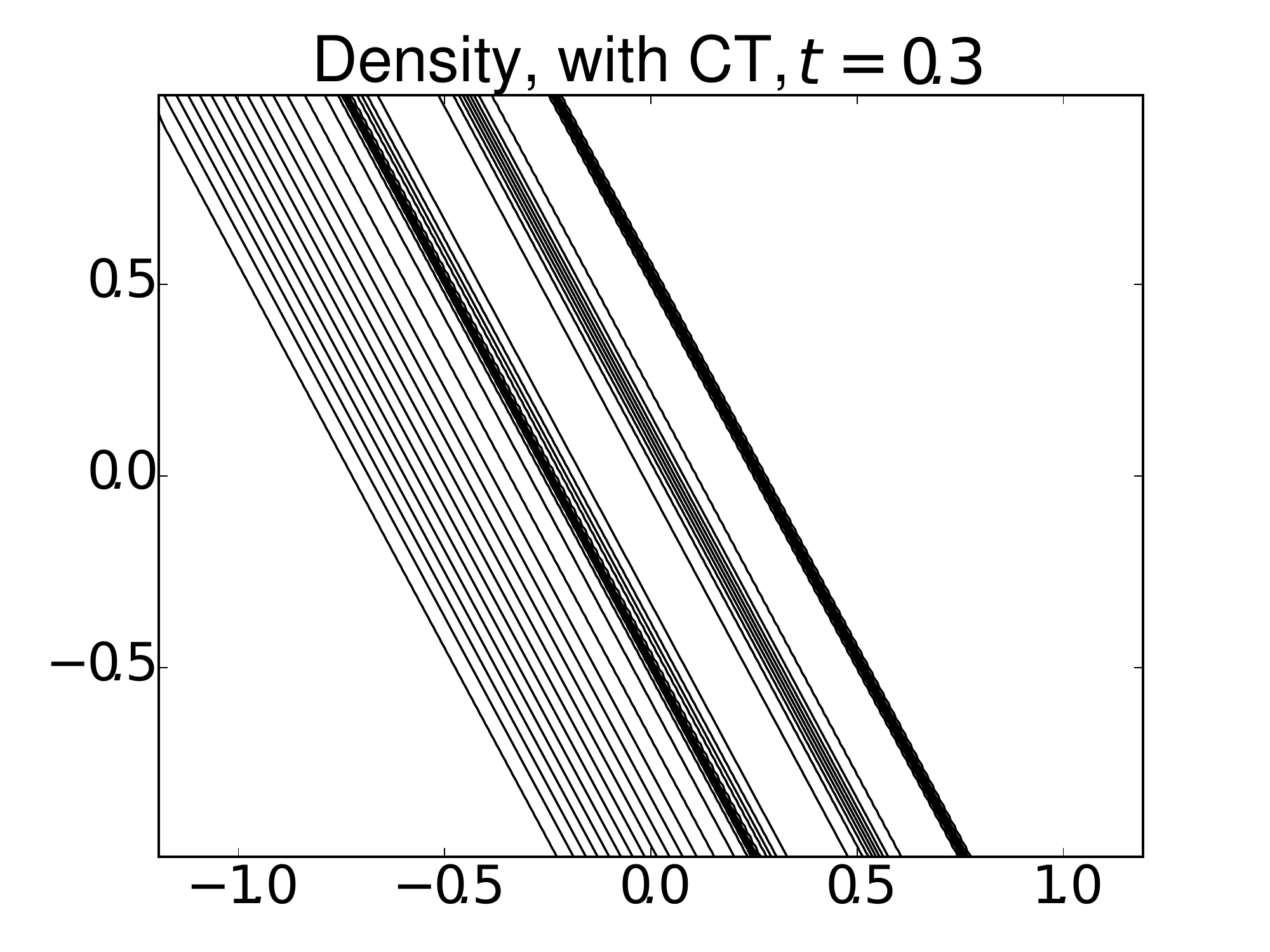} &
        \includegraphics[width=0.4\textwidth]{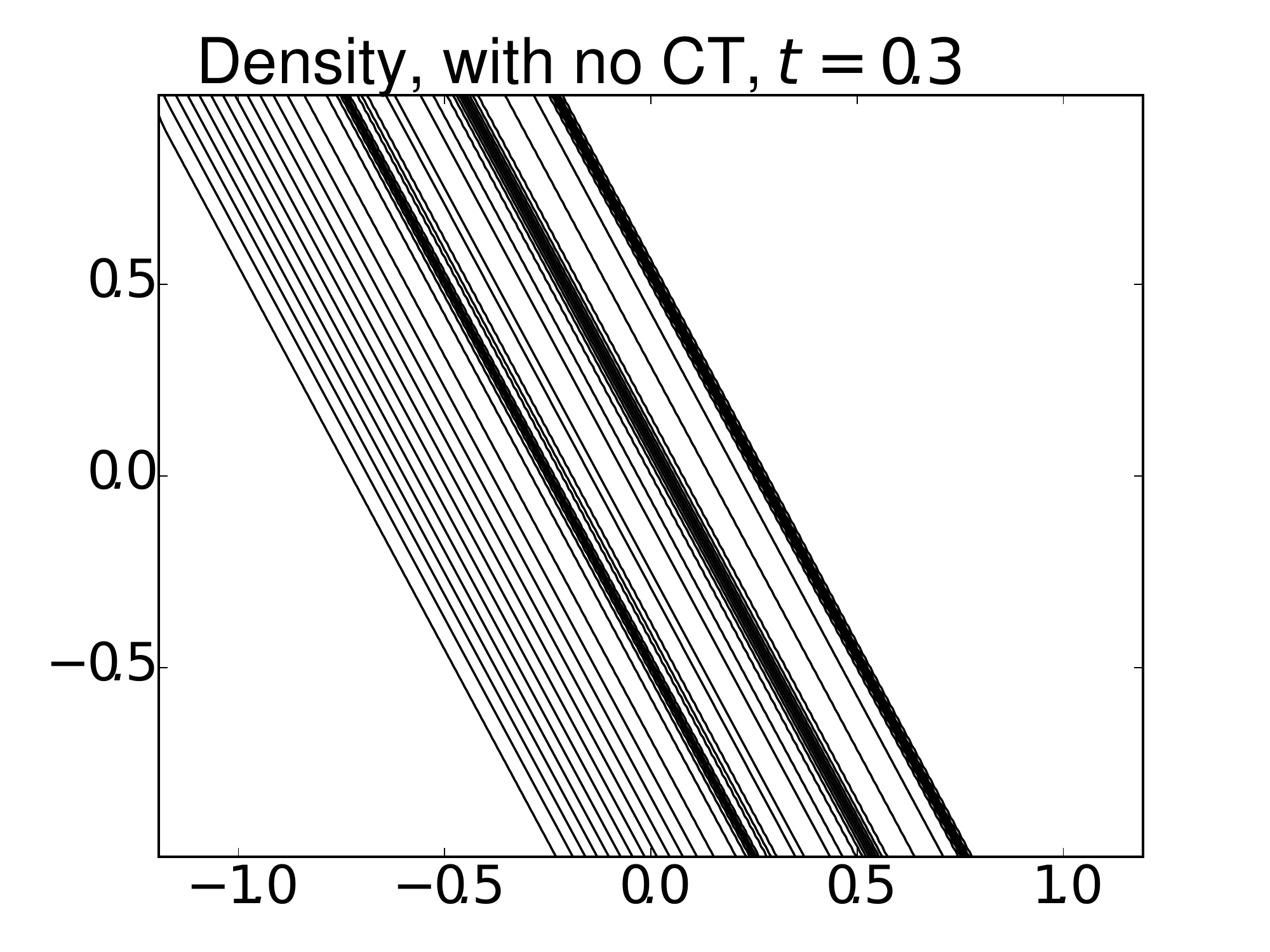} \\
        \includegraphics[width=0.4\textwidth]{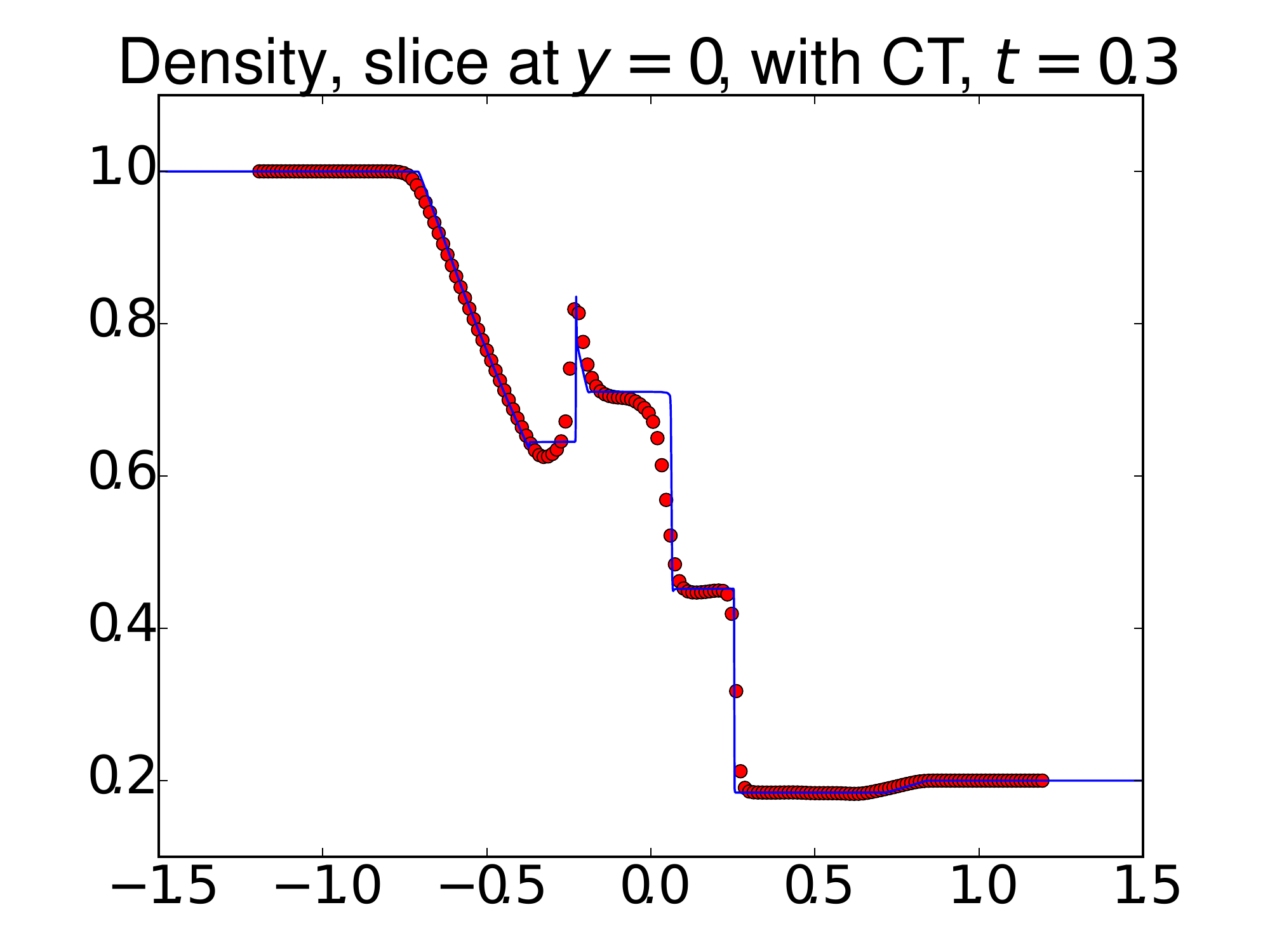} &
        \includegraphics[width=0.4\textwidth]{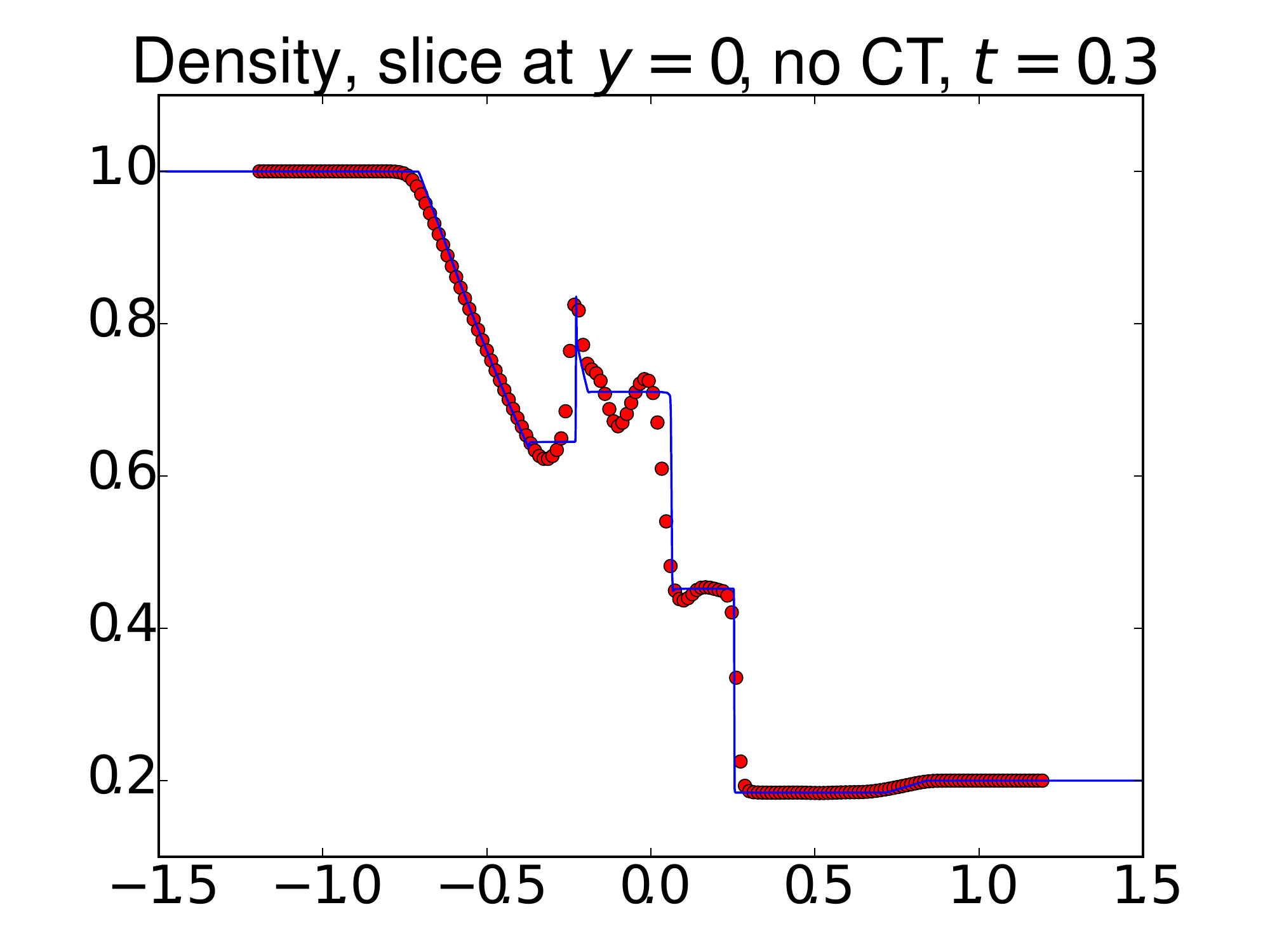} \\
    \end{tabular}
\caption{The rotated shock tube problem.  Here, we compare the solver
with (left panels) and without (right panels) constrained transport turned on.
A uniform mesh of size $180\times 150$ is used for both simulations.
The angle of rotation for the initial conditions is $\phi = \tan^{-1}(0.5)$,
and 30 equally spaced contours \newtext{ranging from the minumum to the maximum of each function} are used for the top two panels.
The contour plot for the solution without CT contains small wiggles that are
much more pronounced when slices of the solution are sampled.  To this end,
a slice of the solution along $y=0$ is presented in the bottom two panels.
Further evolution produces a solution that causes the code to fail in the case where CT is turned off.
The positivity-preserving limiter is turned off in order to exercise the code.
The solid lines in the bottom images are reference solutions that are computed
by solving the equivalent 1D shock problem on a uniform mesh with 50,000
points with the fifth-order finite difference WENO method.
\label{fig:2DShockTubeDensity}}
\end{center}
\end{figure}

\begin{figure}
\begin{center}
    \begin{tabular}{cc}
        \includegraphics[width=0.4\textwidth]{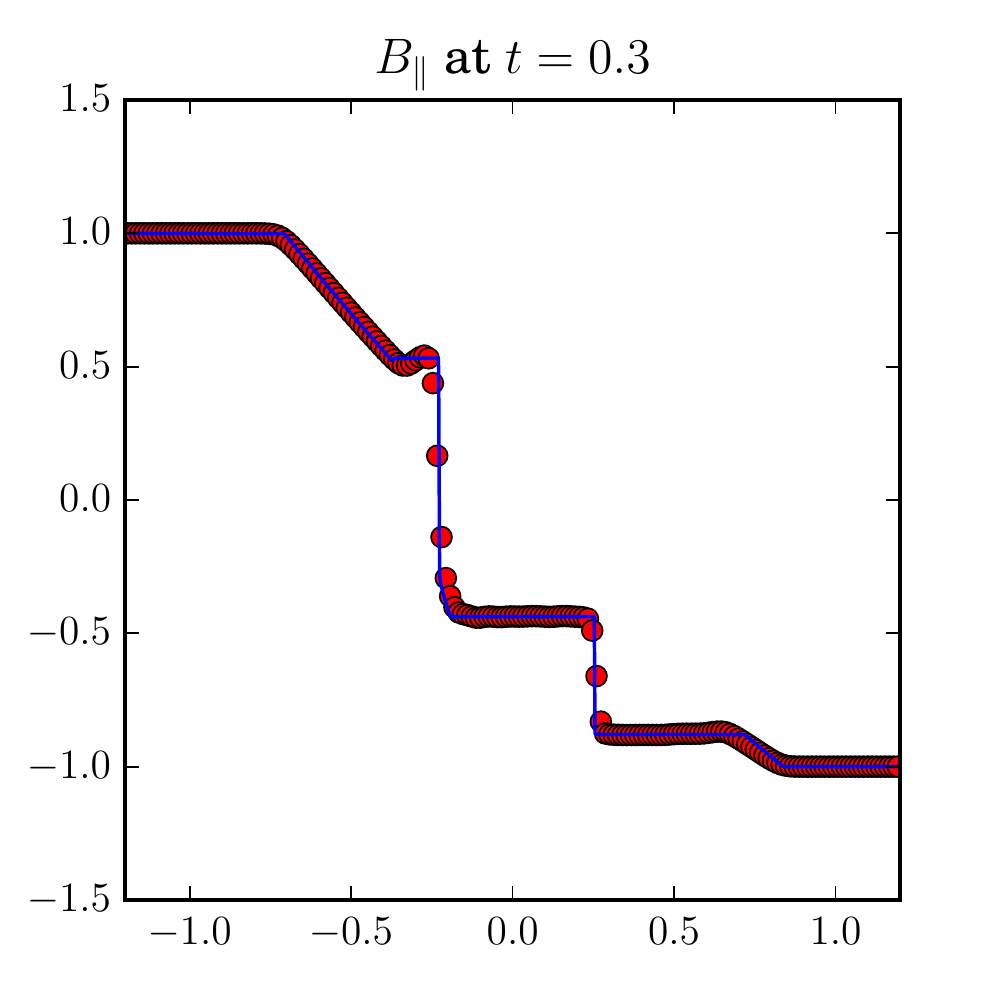} &
        \includegraphics[width=0.4\textwidth]{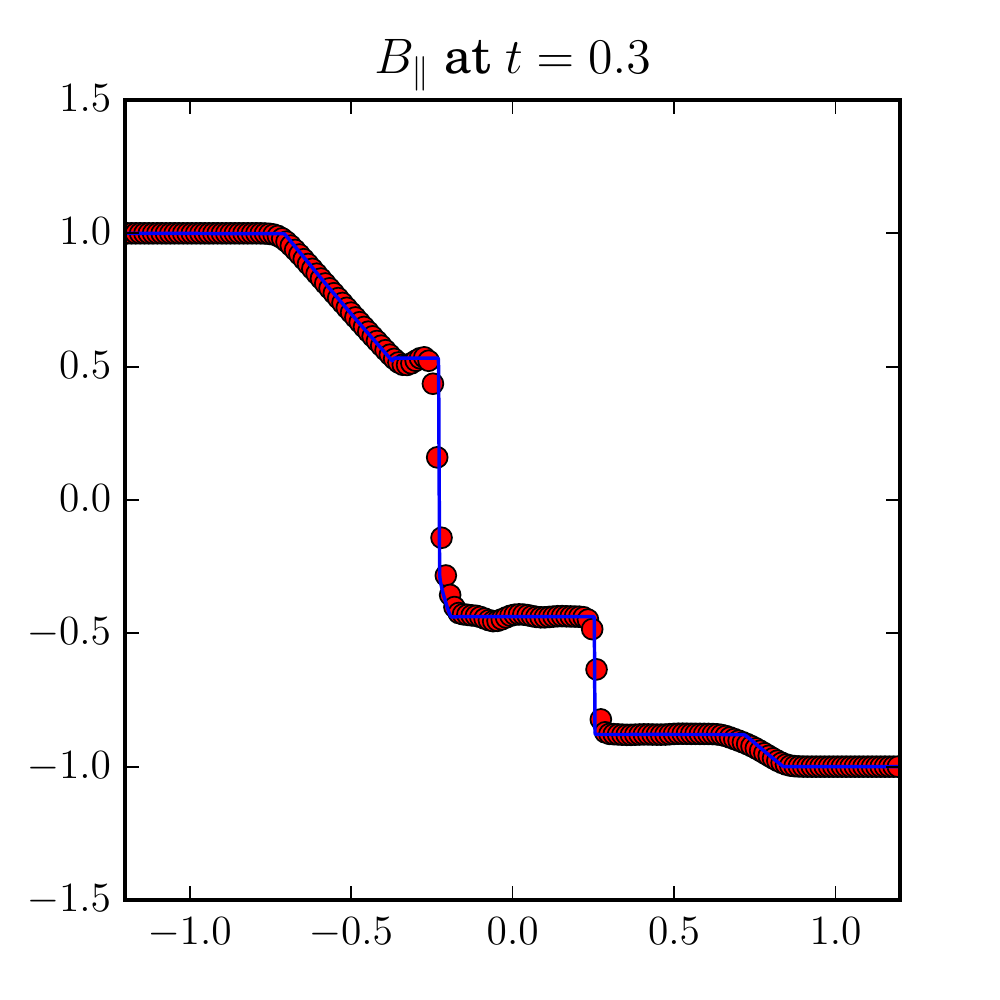} \\
        \includegraphics[width=0.4\textwidth]{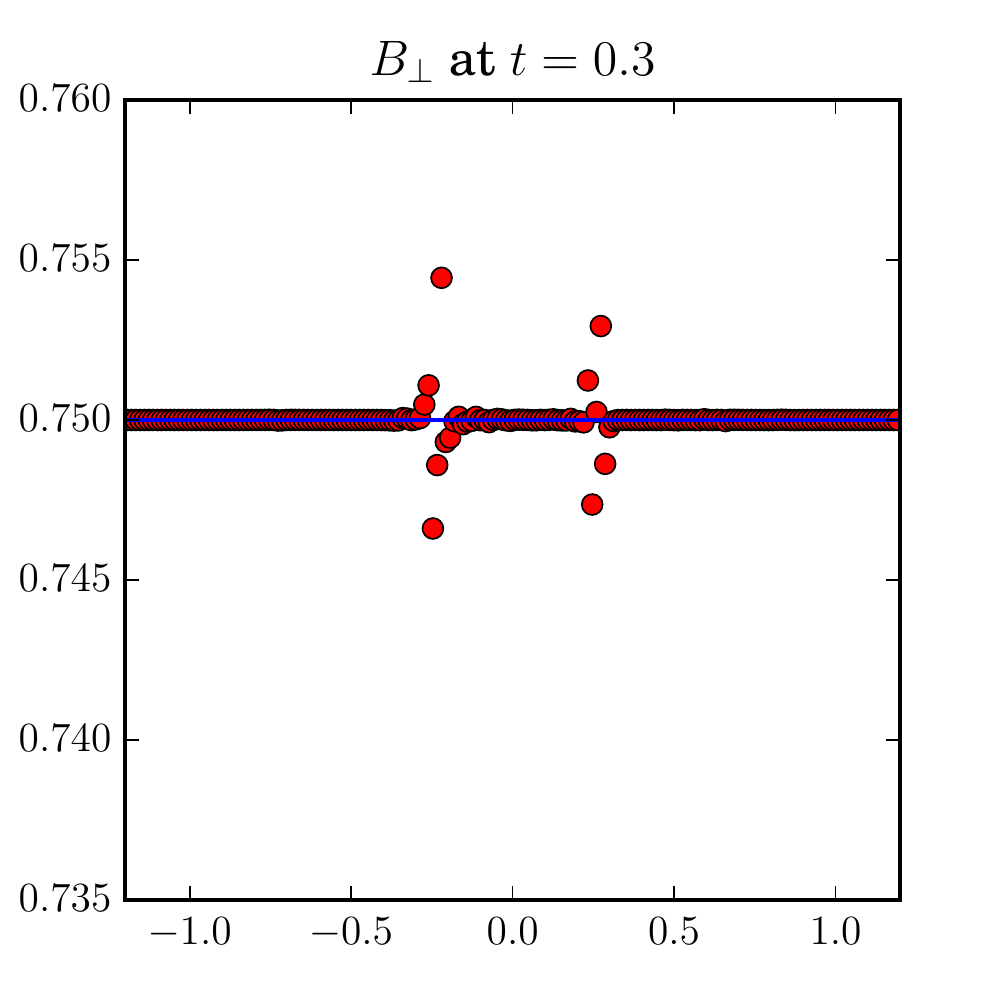} &
        \includegraphics[width=0.4\textwidth]{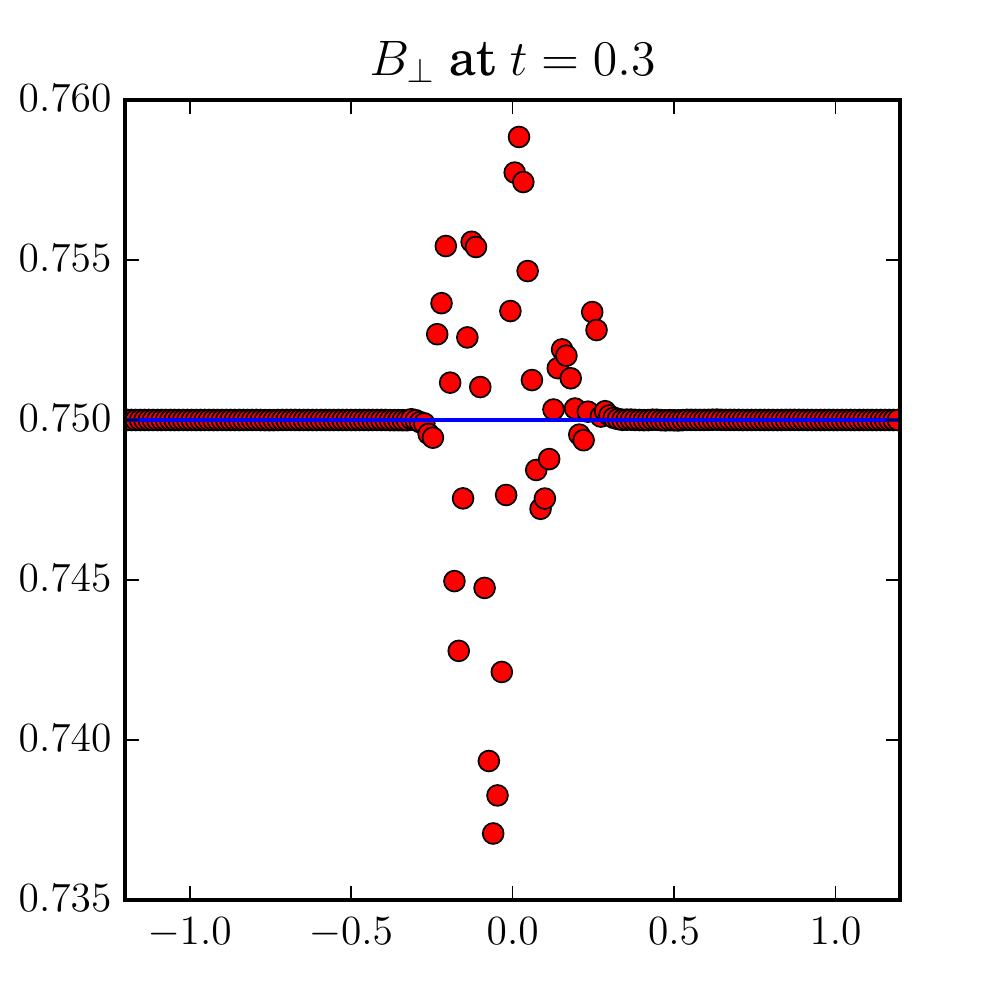} \\
        With Constrained Transport & No Constrained Transport
    \end{tabular}
\caption{2D rotated shock tube problem. 
The left panels have constrained transport turned on, and the right panels have constrained transport turned off.
Components of the magnetic field at $t = 0.3$ along the slice $y=0$.  The
mesh size is $180\times 150$.
The positivity-preserving limiter is turned off in order to exercise the code.
Each solid line is the reference solution described in Fig.  \ref{fig:2DShockTubeDensity}.
We observe that constrained transport (with the Hamilton-Jacobi solver)
allows us to numerically compute magnetic fields with far fewer
oscillations than would otherwise be obtainable.
    \label{fig:2DShockTubeBSlice}}
\end{center}
\end{figure}

In Figure~\ref{fig:2DShockTubeDensity} we present results for the density of
solutions computed using PIF-WENO with and without constrained-transport.  We
note that the contour plot of the solution obtained without constrained
transport does not exhibit the unphysical \newtext{wiggles} 
as is the case
in Fig 2(b) of~\cite{Christlieb2014}.  However, as can be seen from the plots
of the slice at $y=0$, 
unphysical oscillations appear in the PIF-WENO scheme that has constrained
transport turned off.  It is also clear from these plots that the constrained
transport method we propose in the current work is able to suppress the
unphysical oscillations satisfactorily.  As a side note, we find it helps to
use a global, as opposed to a local value for $\alpha$ in the Lax-Friedrichs
flux splitting for the high-order WENO reconstruction in order to further
reduce undesirable spurious oscillations.

\subsection{2D Orszag-Tang vortex}

In this section, we investigate the Orszag-Tang vortex problem.  A notable
feature of this problem is that shocks and cortices emerge from smooth initial
conditions as time evolves.  This is a standard test problem for numerical
schemes for MHD
equations~\cite{Christlieb2014,Dai1998a,Rossmanith2006,Toth2000, Zachary1994}.
The initial conditions are
\begin{equation}
    \rho = \gamma^2, \quad {\bf u} = \left( -\sin(y), \sin(x), 0 \right),
    \quad {\bf B} = \left( 
    -\sin(y), \sin(2x), 0 \right), \quad 
    p = \gamma,
\end{equation}
with an initial magnetic potential of 
\begin{equation}
    A^z(0, x, y) = 0.5\cos(2x) + \cos(y).
\end{equation}
The computational domain is $[0, 2\pi] \times [0, 2\pi]$, with double-periodic boundary conditions.

\begin{figure}
\begin{center}
    \begin{tabular}{cc}
        \includegraphics[width=0.4\textwidth]{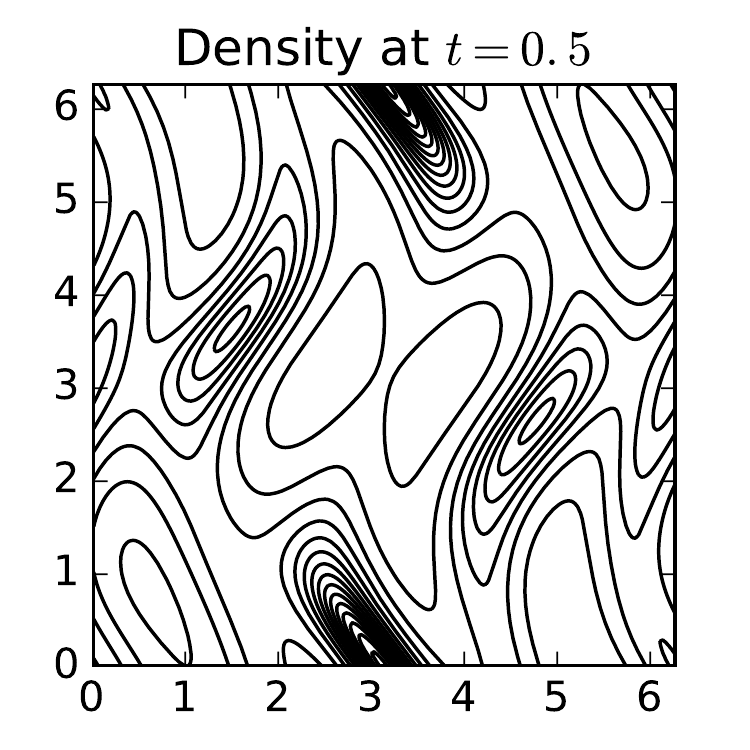} &
        \includegraphics[width=0.4\textwidth]{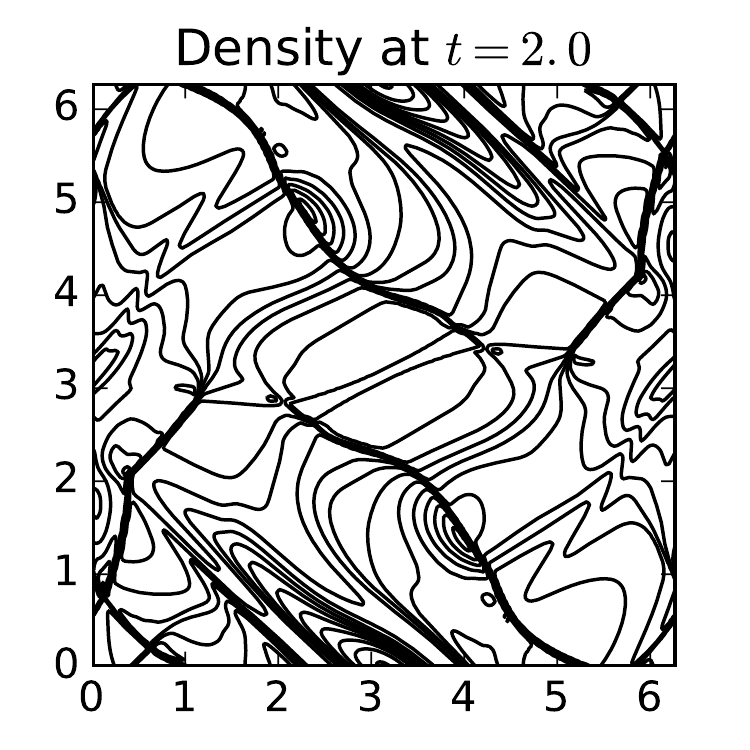} \\
        (a) & (b) \\
        \includegraphics[width=0.4\textwidth]{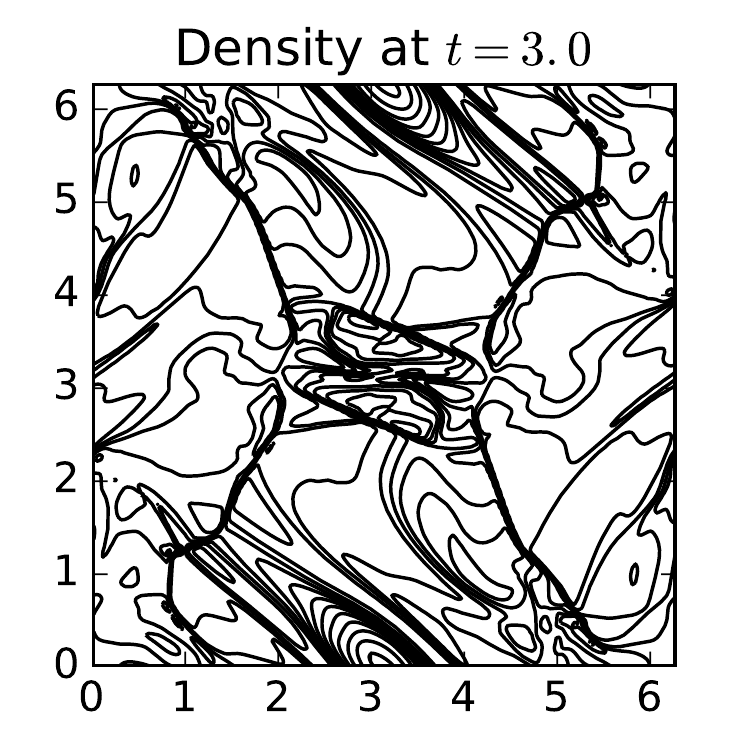} &
        \includegraphics[width=0.4\textwidth]{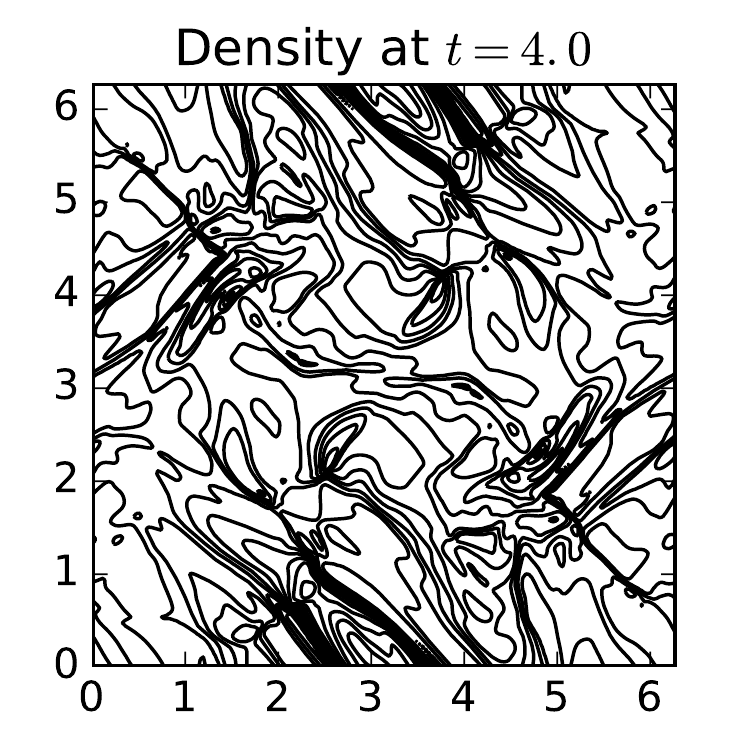} \\
        (c) & (d)
    \end{tabular}
    \caption{The Orszag-Tang problem. We show density contour plots at (a) $t = 0.5$, (b) $t=2$, (c) $t=3$, (d) $t=4$.  The solution is computed using PIF-WENO with constrained transport on and positivity-preserving limiter \newtext{on}, on a $192\times 192$ mesh.  A total of 15 equally spaced contours are used for each graph. \label{fig:2DOrszagTang}}
\end{center}
\end{figure}

We present in Figure~\ref{fig:2DOrszagTang} the density contour plots at
$t=0.5$, $t=2$, $t=3$, and $t=4$, computed by PIF-WENO with constrained
transport on and positivity-preserving limiter \newtext{on}.  

Control of divergence error of the magnetic field is critical for this test
problem.  If we turn off the constrained transport, the simulation crashes at
$t = 1.67$.  In Figure~\ref{fig:2DOrszagTangNoCTvsNoPPvsPP} we present plots
of the density at time $t = 1.5$ obtained \newtext{with different
configurations of the numerical schemes}.  Note the development of the
nonphysical features in the solution obtained without constrained transport.

\newtext{We also note that in this problem, where the positivity-preserving
limiter is not needed for the simulation, the limiter leads to little
difference in the solution.   We present in {Figures
\ref{fig:2DOrszagTangNoCTvsNoPPvsPP} and \ref{fig:2DOrszagTangPPvsNoPP}} plots
that demonstrate this.}
\newtext{The plots of the type in {Figures \ref{fig:2DOrszagTang} and
\ref{fig:2DOrszagTangPPvsNoPP}} are presented in many sources.  Our plots
agree with results from the literature~\cite{Christlieb2014,Dai1998a,Rossmanith2006,Toth2000,Zachary1994}.
}

\begin{figure}
\begin{center}
    \begin{tabular}{ccc}
        \includegraphics[width=0.32\textwidth]{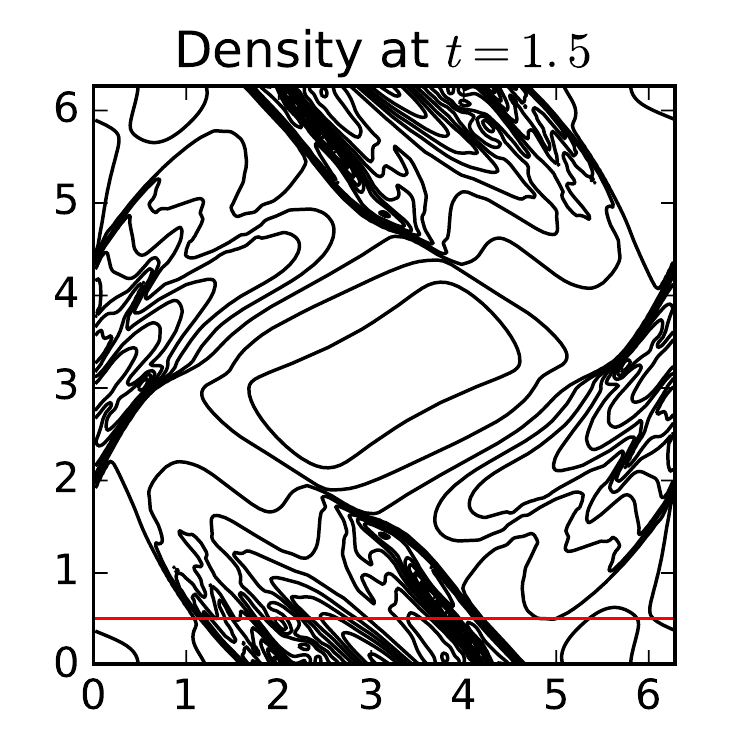} &
        \includegraphics[width=0.32\textwidth]{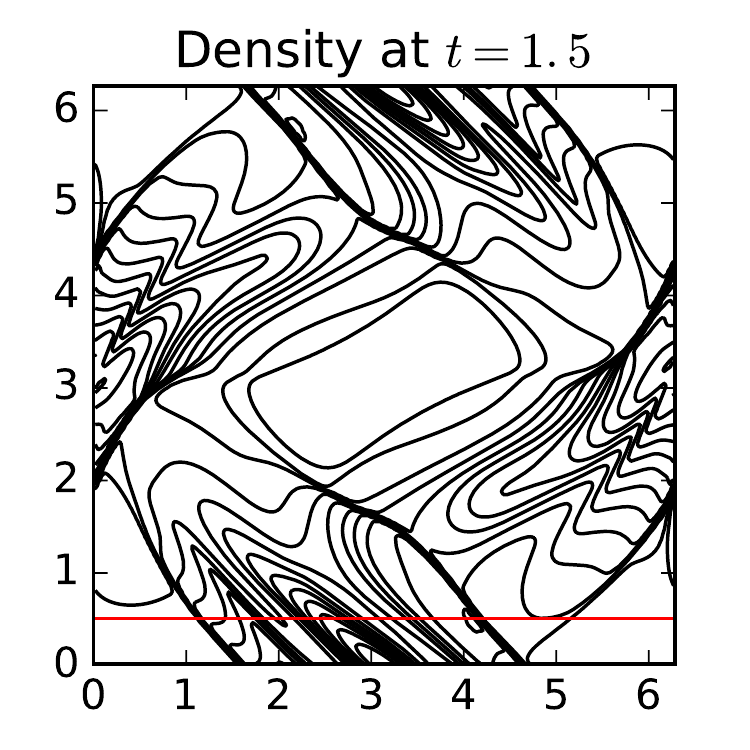} &
        \includegraphics[width=0.32\textwidth]{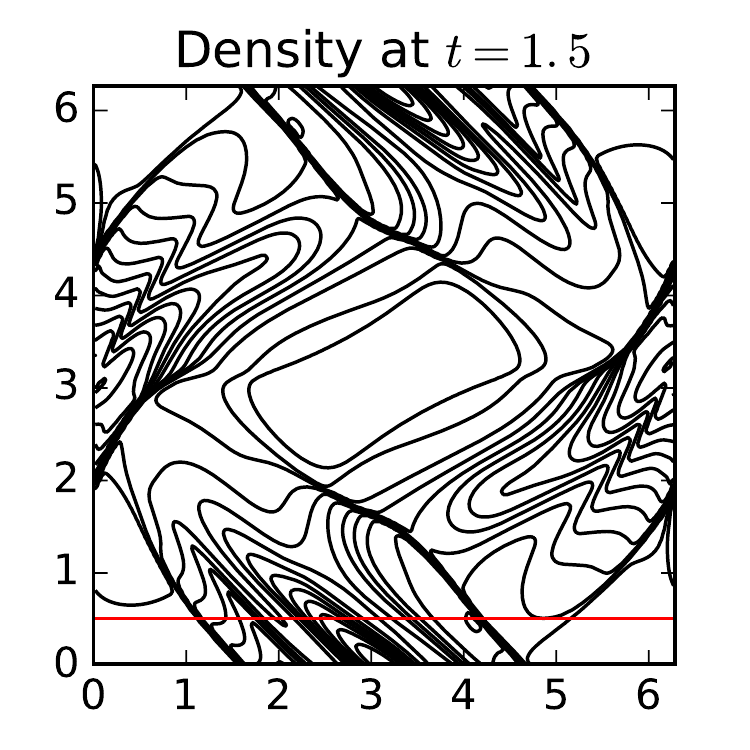} \\
        (a) & (b) & (c)\\
        \multicolumn{3}{c}{\includegraphics[width=0.9\textwidth]{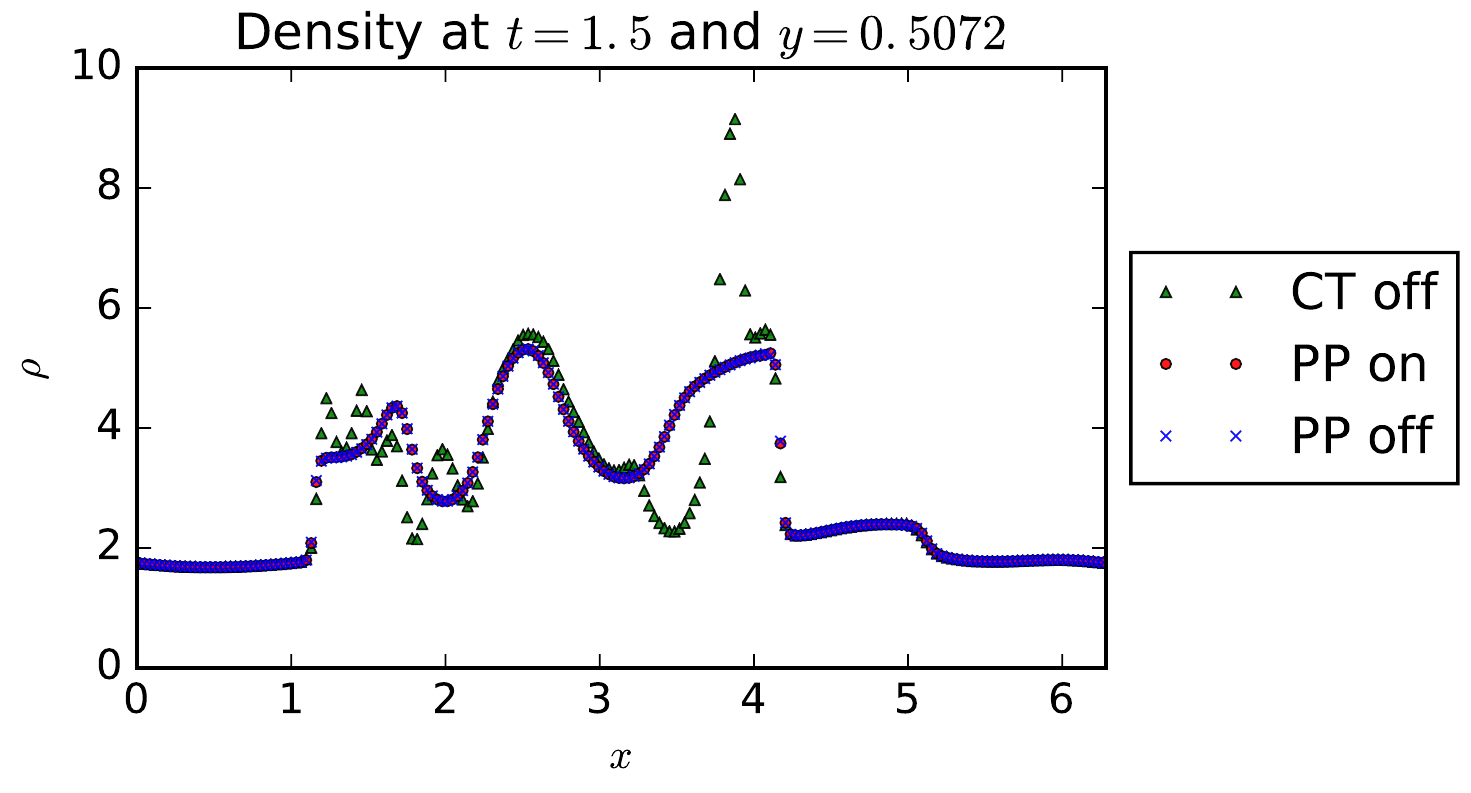}} \\
        \multicolumn{3}{c}{(d)}        
    \end{tabular}
    \caption{\newtext{The Orszag-Tang problem. Density plots at $t=1.5$ for solutions computed using different configurations in the numerical scheme.  (a) PIF-WENO, with constrained transport turned off; (b) PIF-WENO with constrained transport and positivity preserving limiter turned on; (c) PIF-WENO with constrained transport turned on and positivity preserving limiter turned off; (d) The slice along $y = 0.5072$.  The solutions are computed with a $192\times 192$ mesh.  A total of 15 equally spaced contours are used for each of (a), (b), and (c).} \label{fig:2DOrszagTangNoCTvsNoPPvsPP}}
\end{center}
\end{figure}

\begin{figure}
\begin{center}
    \begin{tabular}{cc}
        \includegraphics[width=0.45\textwidth]{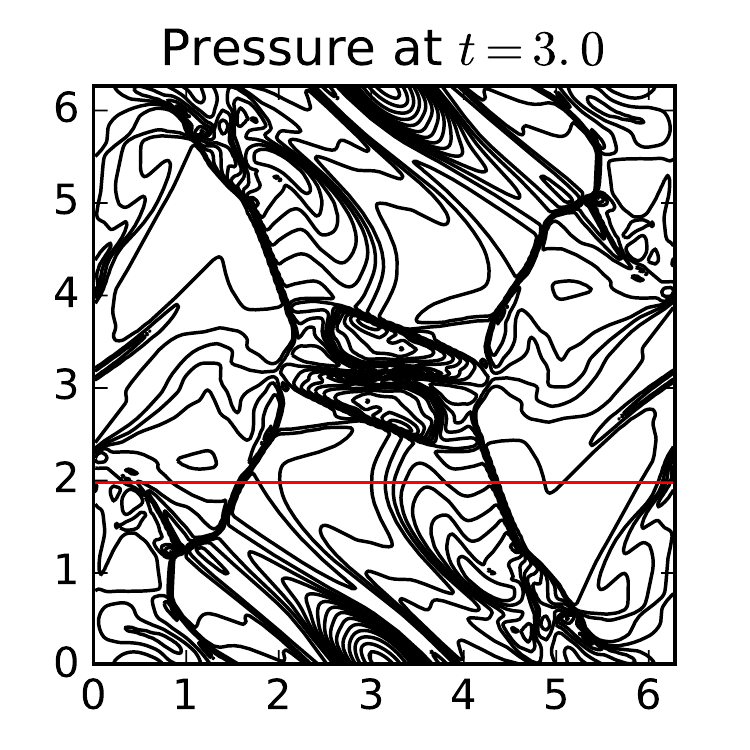} &
        \includegraphics[width=0.45\textwidth]{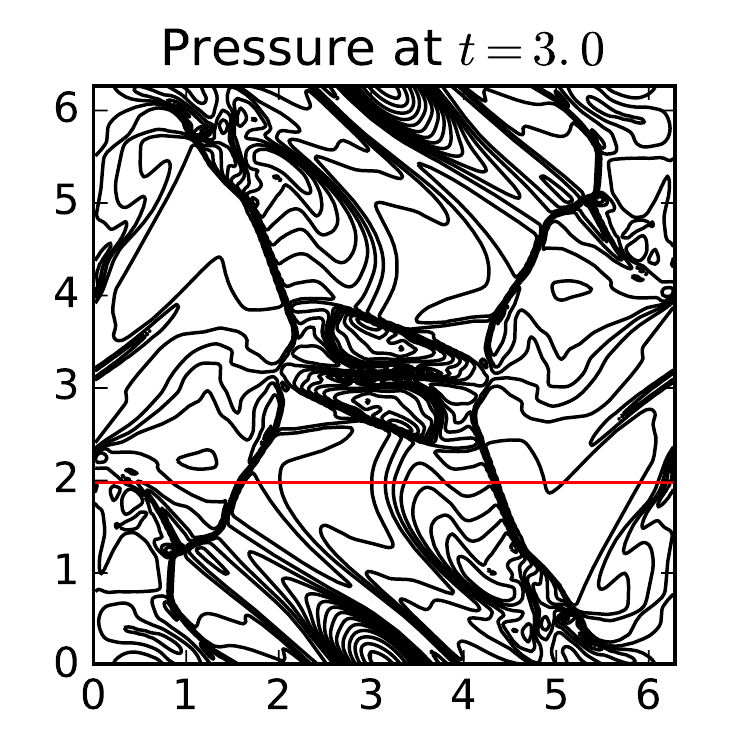} \\
        (a) & (b) \\
        \multicolumn{2}{c}{\includegraphics[width=0.9\textwidth]{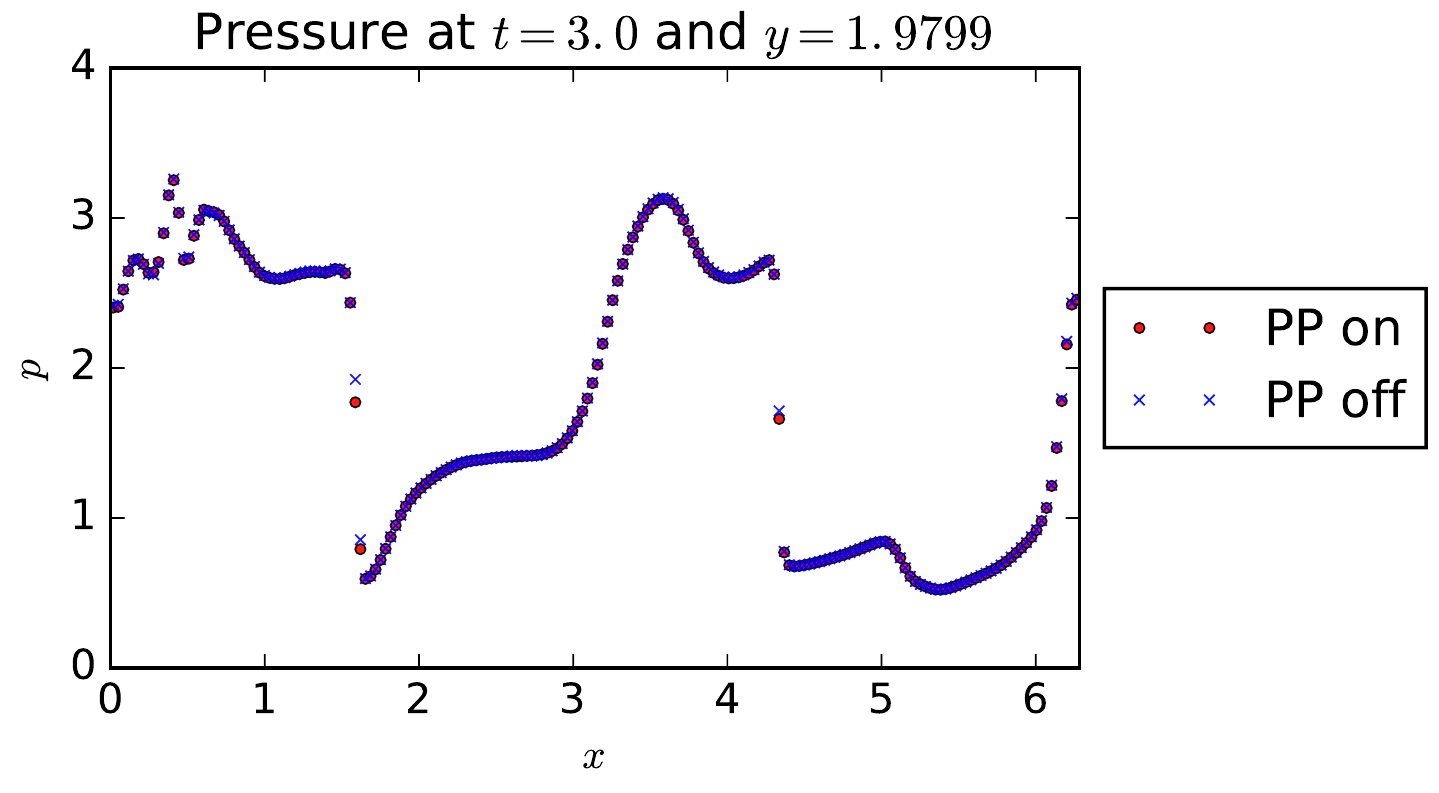}} \\
        \multicolumn{2}{c}{(c)}        
    \end{tabular}
    \caption{\newtext{The Orszag-Tang problem. Pressure plots at $t=3.0$ for solutions computed using different configurations in the numerical scheme.  (a) PIF-WENO with constrained transport turned on and positivity preserving limiter turned off; (b) PIF-WENO with constrained transport and positivity preserving limiter turned on;  (c) The slice along $y = 1.9799$.  The solutions are computed with a $192\times 192$ mesh.  A total of 15 equally spaced contours are used for each of (a) and (b).} \label{fig:2DOrszagTangPPvsNoPP}}
\end{center}
\end{figure}

\subsection{2D rotor problem}

This is a two-dimensional test problem that involves low pressure values
\cite{Balsara1999a}.  
The setup we use here is the same as
the very low $\beta$ version found in~\cite{Waagan2009a}.  These initial
conditions are
\begin{gather}
	\rho = 
		\begin{cases}
			10 & \text{if $r \leq 0.1$,} \\
			1 + 9 \tilde{{f}}(r) & \text{if $r\in (0.1, 0.115)$,} \\
			1 & \text{if $r\geq 0.115$,}
		\end{cases} \\
	u^x = 
		\begin{cases}
			-10y + 5 & \text{if $r \leq 0.1$,} \\
			(-10y + 5)\tilde{f}(r) & \text{if $r \in (0.1, 0.115)$,} \\
			0 & \text{if $r \geq 0.115$,}
		\end{cases} \\
	u^y =
		\begin{cases}
			10x - 5 & \text{if $r \leq 0.1$,} \\
			(10x - 5)\tilde{f}(r) & \text{if $r \in (0.1, 0.115)$,} \\
			0 & \text{if $r \geq 0.115$,}
		\end{cases} \\
	u^z = 0, \quad B^x = \frac{2.5}{\sqrt{4\pi}}, \quad B^y = 0,
	\quad B^z = 0, \quad p = 10^{-8}, \qquad A^z = \frac{2.5}{\sqrt{4\pi}} y,
\end{gather}
where
\begin{equation}
	r = \sqrt{(x-0.5)^2 + (y - 0.5)^2}, \quad \tilde{{f}}(r) = \frac{1}{3}(23 - 200r).
\end{equation}

The computation domain we use is $[0, 1]\times [0, 1]$, \newtext{with}
zeroth order extrapolation on the conserved quantities and
first order extrapolation on the magnetic potential as the boundary conditions
on all four sides 
\newtext{(i.e., conserved quantities at the ghost points are set equal to the
last interior point, and values for the magnetic potential are defined through repeated
extrapolation of two point stencils).}

We compute the solution \newtext{to a final time of} $t = 0.27$ using a $400\times
400$ mesh and present the plots of the magnetic pressure and the magnetic
field line in Figure~\ref{fig:2DRotor}.  The magnetic field line plot
presented here is the contour plot of $A^z$.  A total of $50$ levels are used
in this contour plot. Our result is consistent with the one presented
in~\cite{Waagan2009a}.   We note that the positivity-preserving limiter is
necessary to complete this test problem, because otherwise the code fails.

\begin{figure}
	\begin{center}
		\begin{tabular}{cc}
			\includegraphics[width=0.45\textwidth]{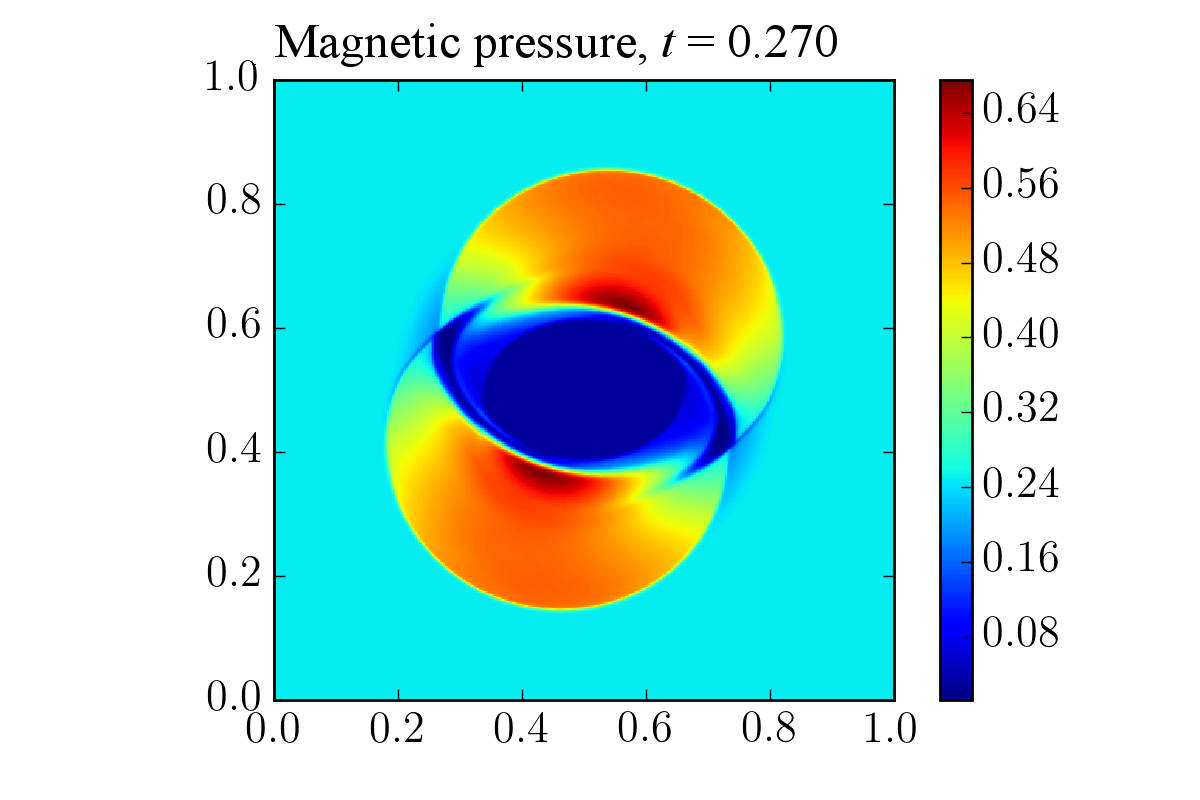} &
			\includegraphics[width=0.45\textwidth]{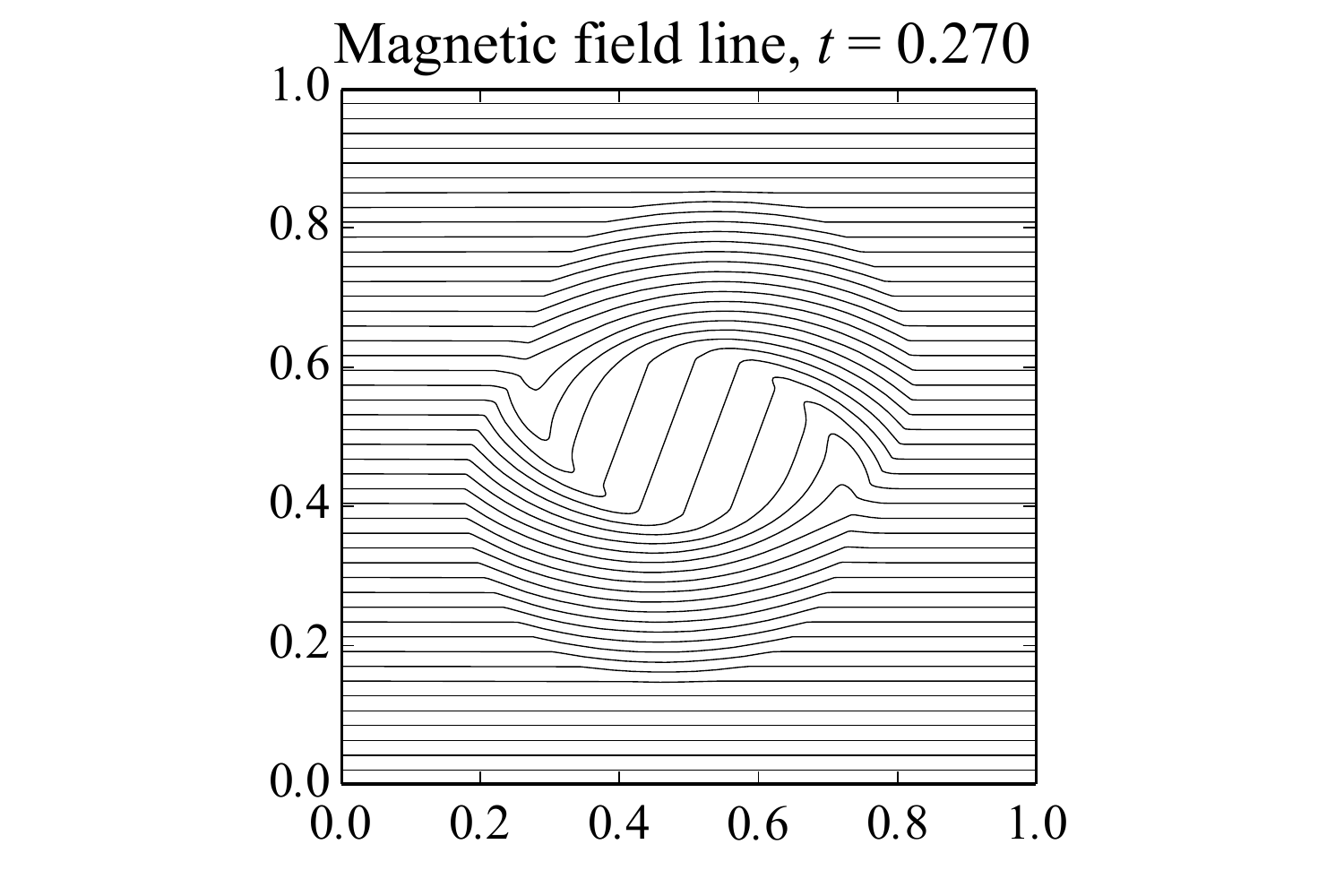} \\
			(a) & (b)
		\end{tabular} 
		\caption{The 2D rotor problem.  Shown here are (a) the pseudocolor plot of the magnetic pressure and (b) the magnetic field line.  This solution is computed on a mesh of size $400 \times 400$.  Constrained transport and positivity-preserving limiters are turned on. \label{fig:2DRotor}}
	\end{center}
\end{figure}

\subsection{Cloud-shock interaction}

The cloud-shock interaction problem is a standard test problem for MHD 
\cite{Christlieb2014,Dai1998a,Helzel2011,Helzel2013,Rossmanith2006}.  The initial conditions are
\begin{align}
    &(\rho, u^x, u^y, u^z, p, B^x, B^y, B^z) \\ \nonumber
    &\quad =
        \begin{cases}
            (3.86859, 11.2536, 0, 0, 167.345, 0, 2.1826182, -2.1826182) &
                \text{if $x < 0.05$}, \\
            (10, 0, 0, 0, 1, 0, 0.56418958, 0.56418958) &
                \text{if $x > 0.05$ and $r < 0.15$,} \\
            (1, 0, 0, 0, 1, 0, 0.56418958, 0.56418958) &
                \text{otherwise,}
        \end{cases}
\end{align}
where $r = \sqrt{(x-0.25)^2 + (y-0.5)^2}$ in 2D, and $r = \sqrt{(x-0.25)^2 + (y-0.5)^2 + (z-0.5)^2}$ in 3D denotes the distance to the center of the stationary cloud.
For both problems, we use the initial magnetic potential
\begin{equation}
    A^x = 0, \qquad
    A^y = 0, \qquad
    A^z =
    \begin{cases}
        -2.1826182 x + 0.080921431 &
            \text{if $x \leq 0.05$}, \\
        -0.56418958 x &
            \text{if $x \geq 0.05$}.
    \end{cases}
\end{equation}
In the 2D case, we only keep track of $A^z$.

\subsubsection{Cloud-shock interaction\newtext{: The 2D problem}}

The computational domain we use is $[0, 1]\times [0, 1]$, with zeroth order
extrapolation on the conserved quantities and first order extrapolation on the
magnetic potential as the boundary conditions on all four sides
\newtext{(i.e., conserved quantities at the ghost points are set equal to the
last interior point, and values for the magnetic potential are defined through repeated
extrapolation of two point stencils).}

\begin{figure}
\begin{center}
    \begin{tabular}{ccc}
        \includegraphics[width=0.30\textwidth]{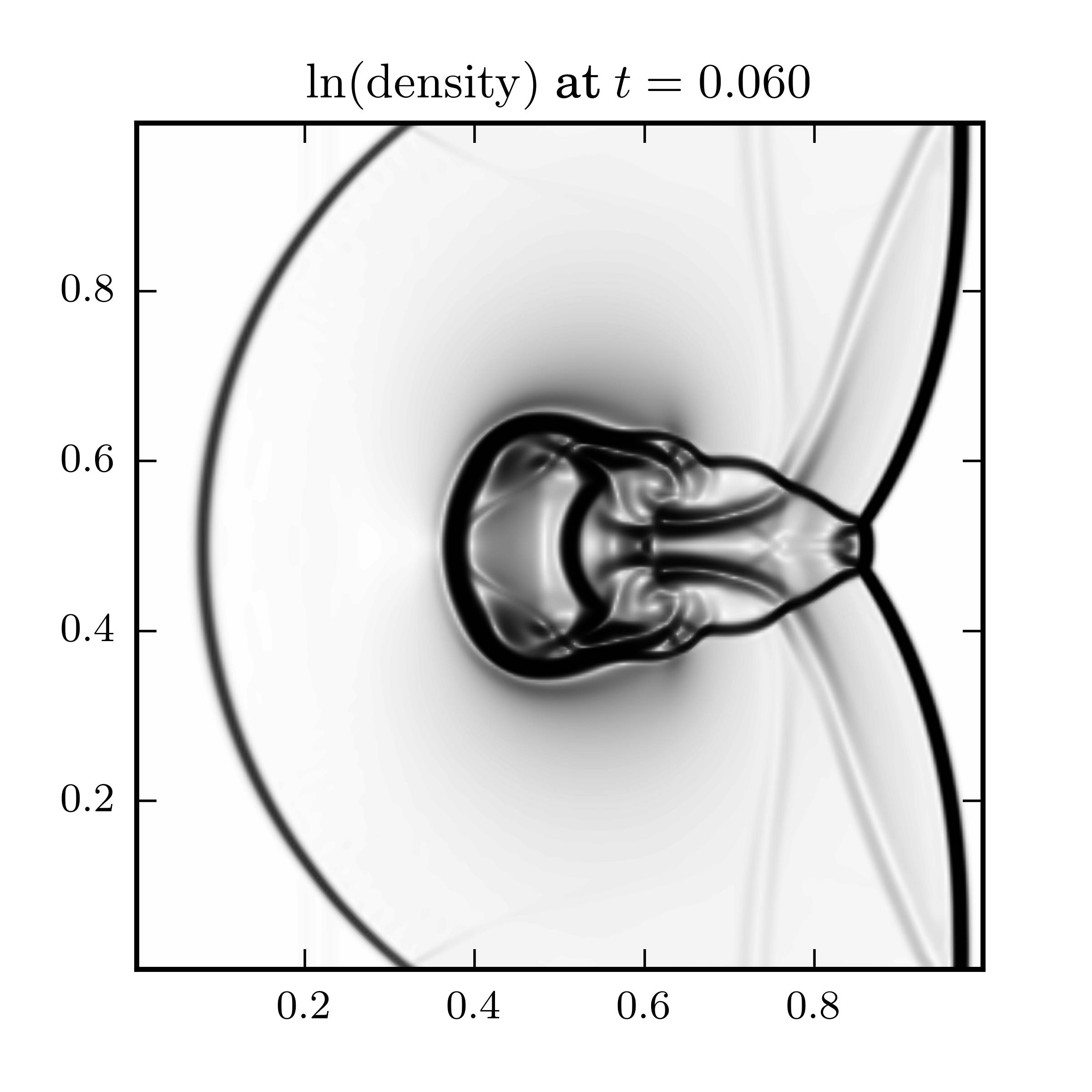} &
        \includegraphics[width=0.30\textwidth]{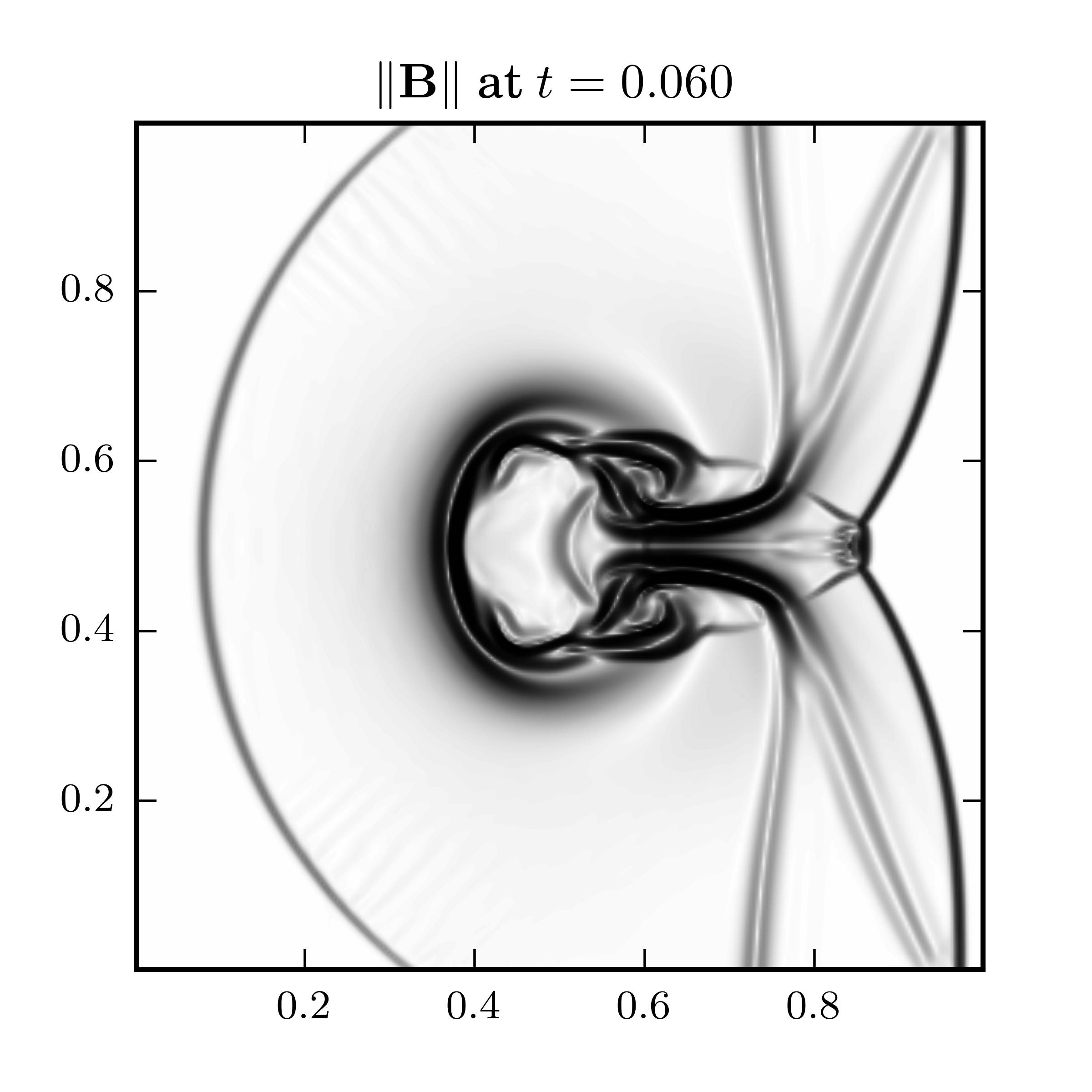} &
        \includegraphics[width=0.30\textwidth]{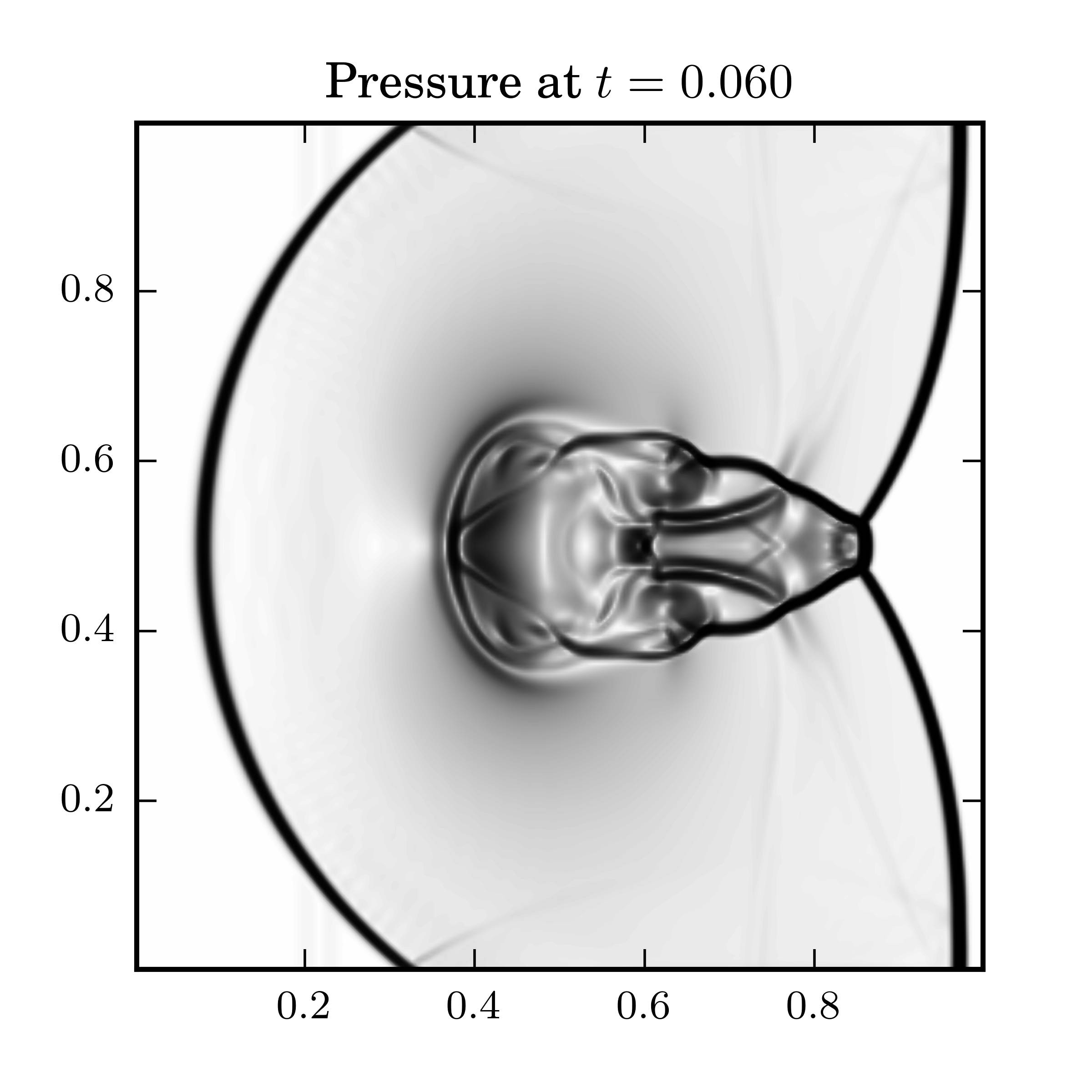} 
        \\
        (a) & (b) & (c) \\
        \includegraphics[width=0.30\textwidth]{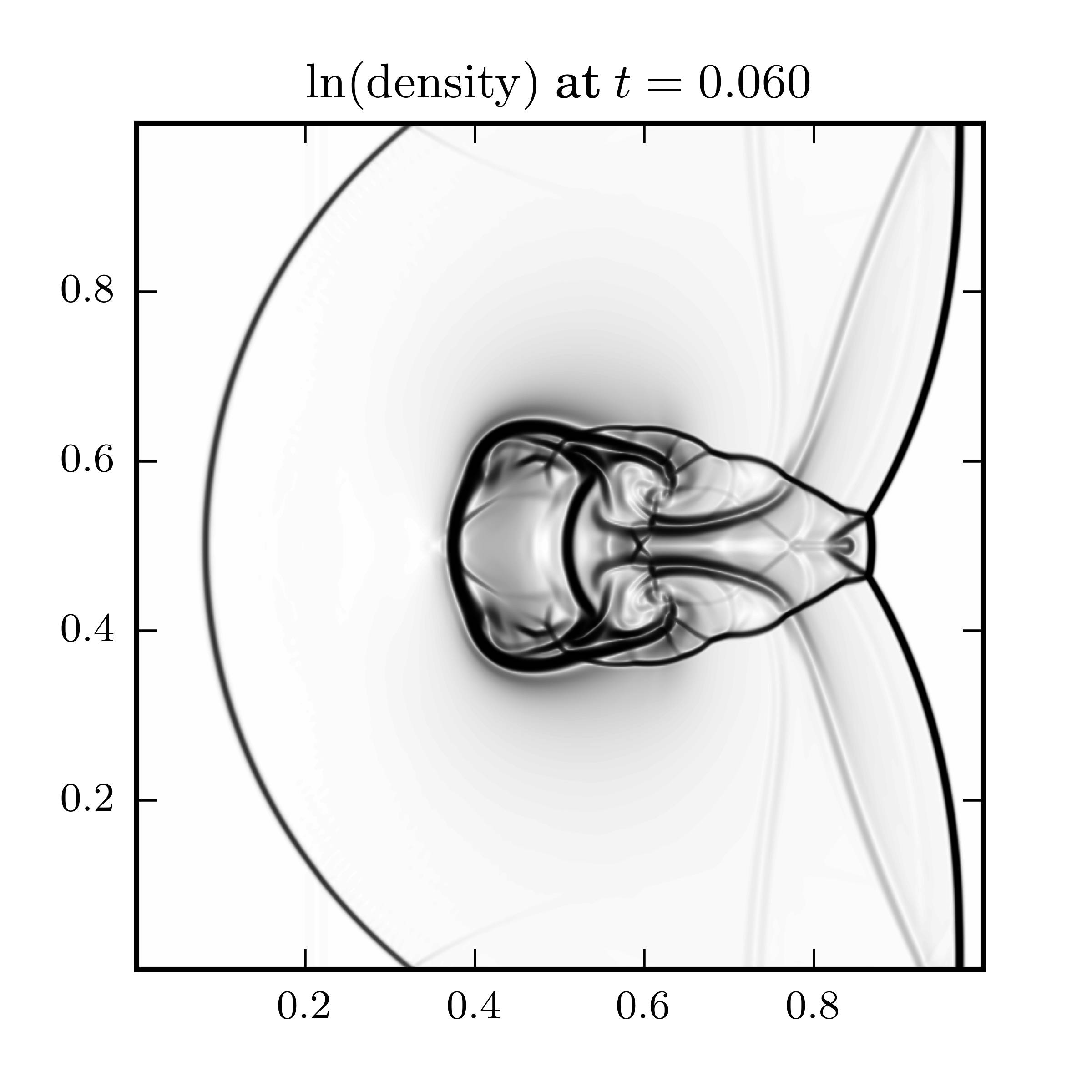} &
        \includegraphics[width=0.30\textwidth]{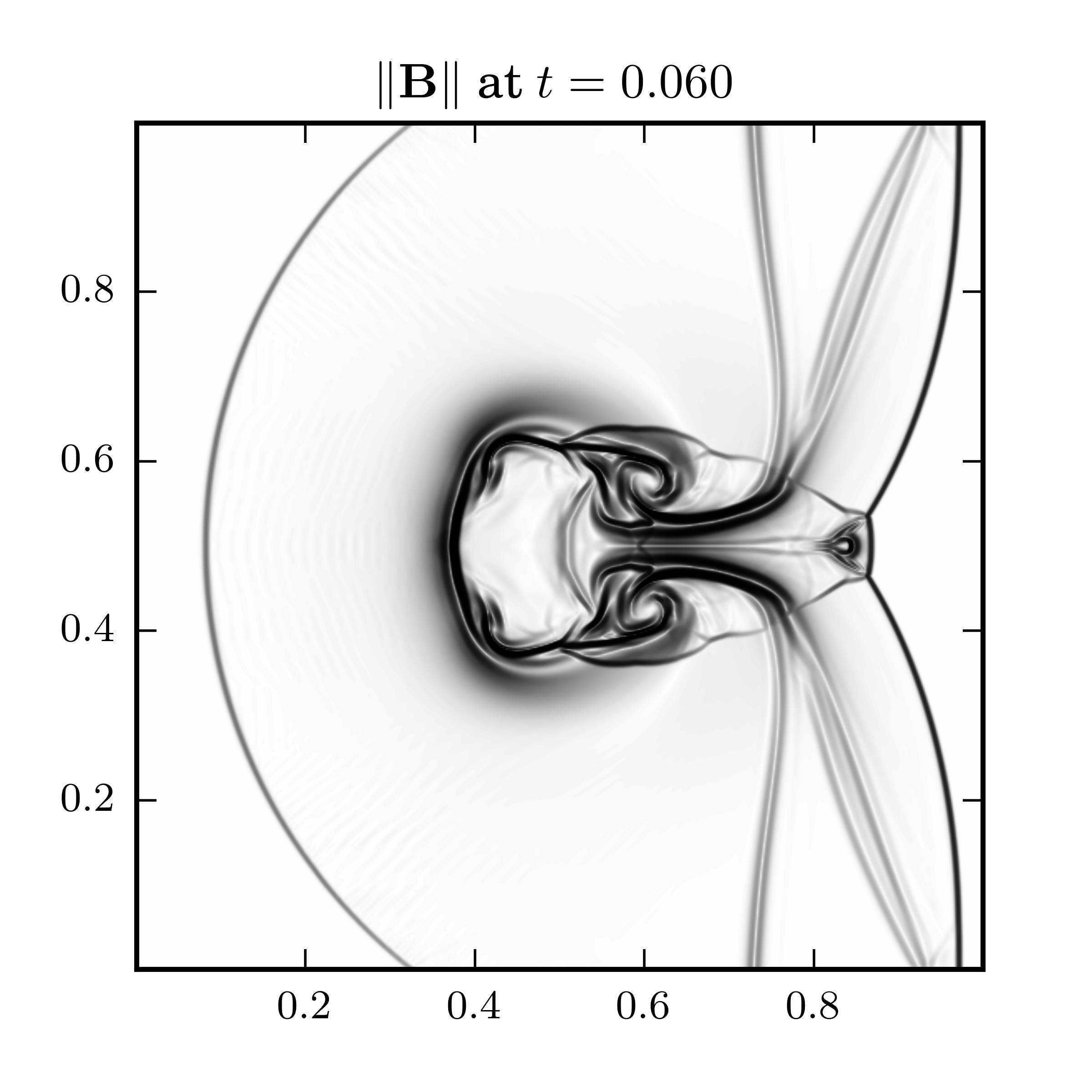} &
        \includegraphics[width=0.30\textwidth]{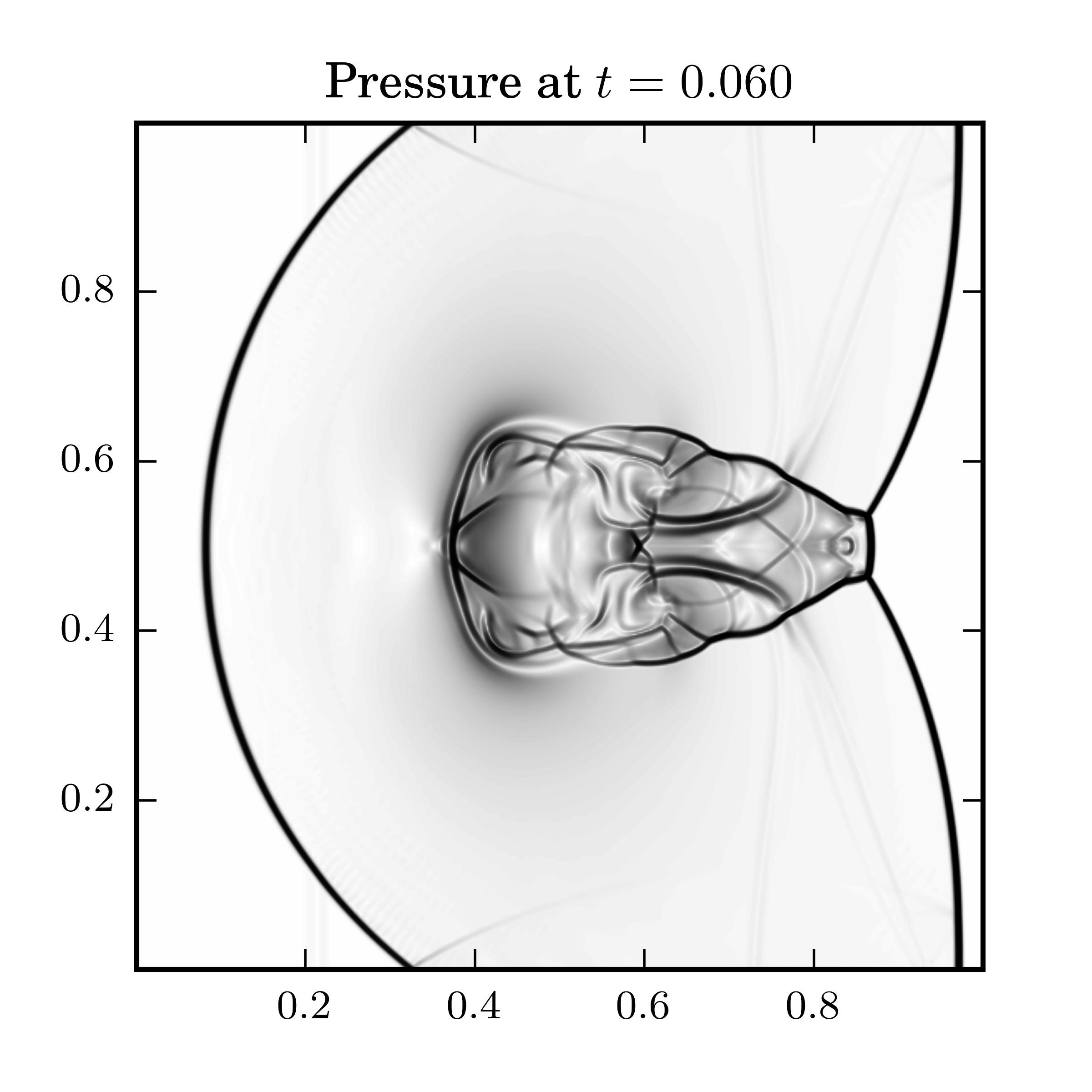}
        \\
        (d) & (e) & (f)
    \end{tabular} 
    \caption{\newtext{The 2D cloud-shock interaction problem.  Here,
    we run the solver to a final time of $t=0.06$.  In the first
    three panels, we show Schlieren plots for (a) the natural log of the
    density, (b) the norm of the magnetic field, and (c) the pressure
    for a mesh of size $256\times256$.  The same results for a mesh
    of size $512\times512$ are presented in panels (d)-(f), where we observe
    much higher resolution for the problem.  Constrained
    transport and positivity-preserving limiter are turned on for both
    simulations.
    \label{fig:2DCloudShock}}}
\end{center}
\end{figure}

We compute the solution at $t = 0.06$ using a $256\times 256$ mesh.  The Schlieren plots of $\ln \rho$ and of $\left|\Bvec\right|$ are presented in Figure~\ref{fig:2DCloudShock}.  We note here that the current scheme is able to capture the shock-wave-like structure near $x=0.75$.  This is consistent with our previous result in~\cite{Christlieb2014}, and an improvement over earlier results in~\cite{Dai1998a,Rossmanith2006}.  We also note that the positivity-preserving limiter is not required for this simulation.  Nonetheless, we present the result here to demonstrate the high resolution of our method, even when the limiter is turned on.

\subsubsection{Cloud-shock interaction\newtext{: The 3D problem}}

The computational domain \newtext{for this problem} is $[0, 1]\times [0, 1] \times [0, 1]$, with zeroth
order extrapolation on the conserved quantities and first order extrapolation
on the magnetic potential as the boundary conditions on all six faces
\newtext{(i.e., conserved quantities at the ghost points are set equal to the
last interior point, and values for the magnetic potential are defined through repeated
extrapolation of two point stencils).}

We compute the solution to a final time of time $t = 0.06$ on a
	$256\times 256 \times 256$ mesh.  In Figure~\ref{fig:3DCloudShock}, we show
the evolution of the density of the solution.
We have two remarks on this result.  The first is that the shock-wave-like
structure near $x = 0.75$ at the final time is also visible when we use a
$128\times 128 \times 128$ mesh.  The second is that the positivity-preserving
limiter is required to run this simulation with $256\times 256 \times 256$
mesh.  The reason is that extra structure that contains very low pressure
shows up in this mesh at time $t = 0.0378$.  This extra structure cannot be
observed on the coarser mesh with the method proposed in
the current work, nor do we observe it with our previous SSP-RK
solver~\cite{Christlieb2014}.  Therefore, it does
not cause trouble for simulations using a $128\times 128 \times 128$ mesh.
\begin{figure}
\begin{center}
\begin{tabular}{ccc}
    \includegraphics[width=0.30\textwidth]{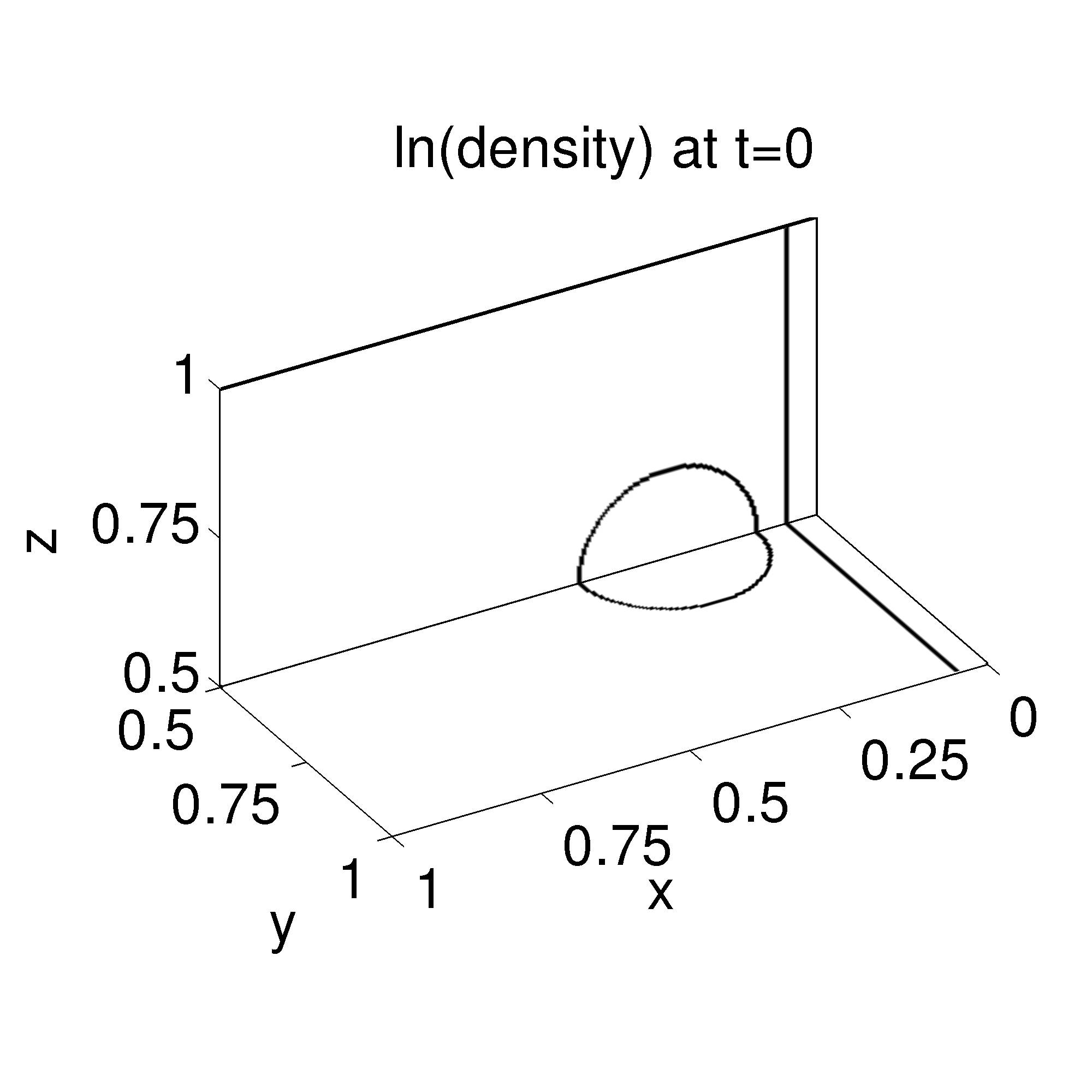} &
    \includegraphics[width=0.30\textwidth]{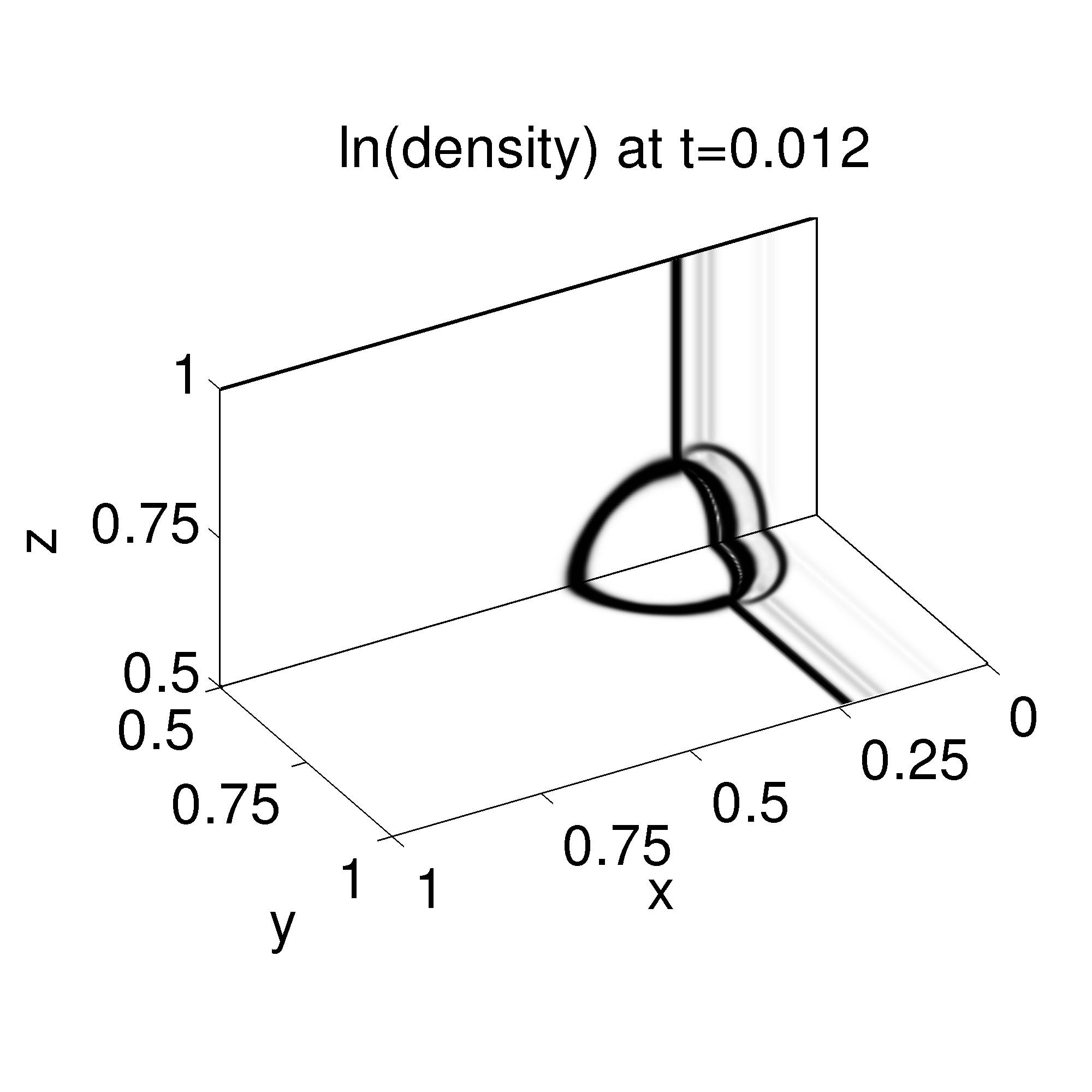} &
    \includegraphics[width=0.30\textwidth]{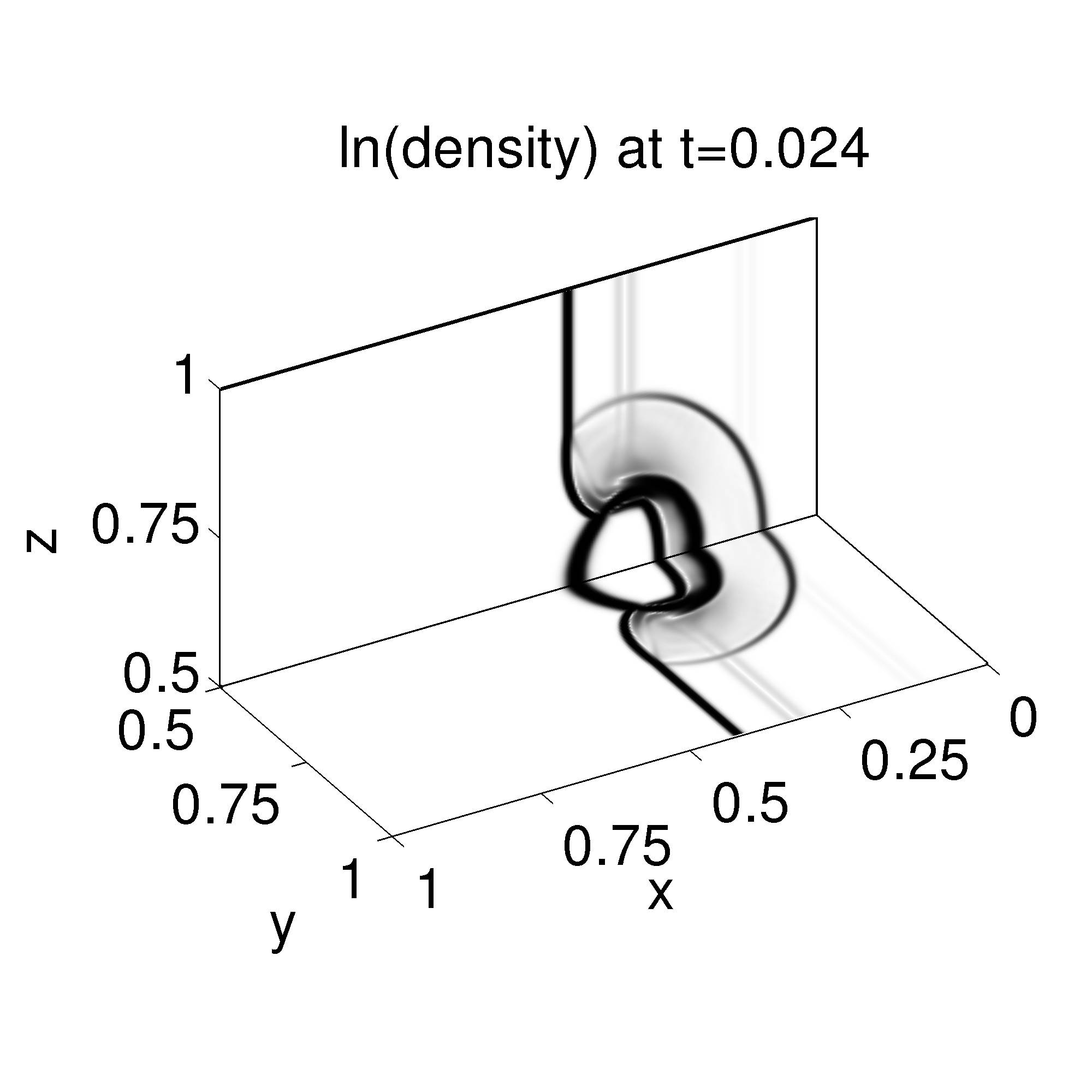} \\
    \includegraphics[width=0.30\textwidth]{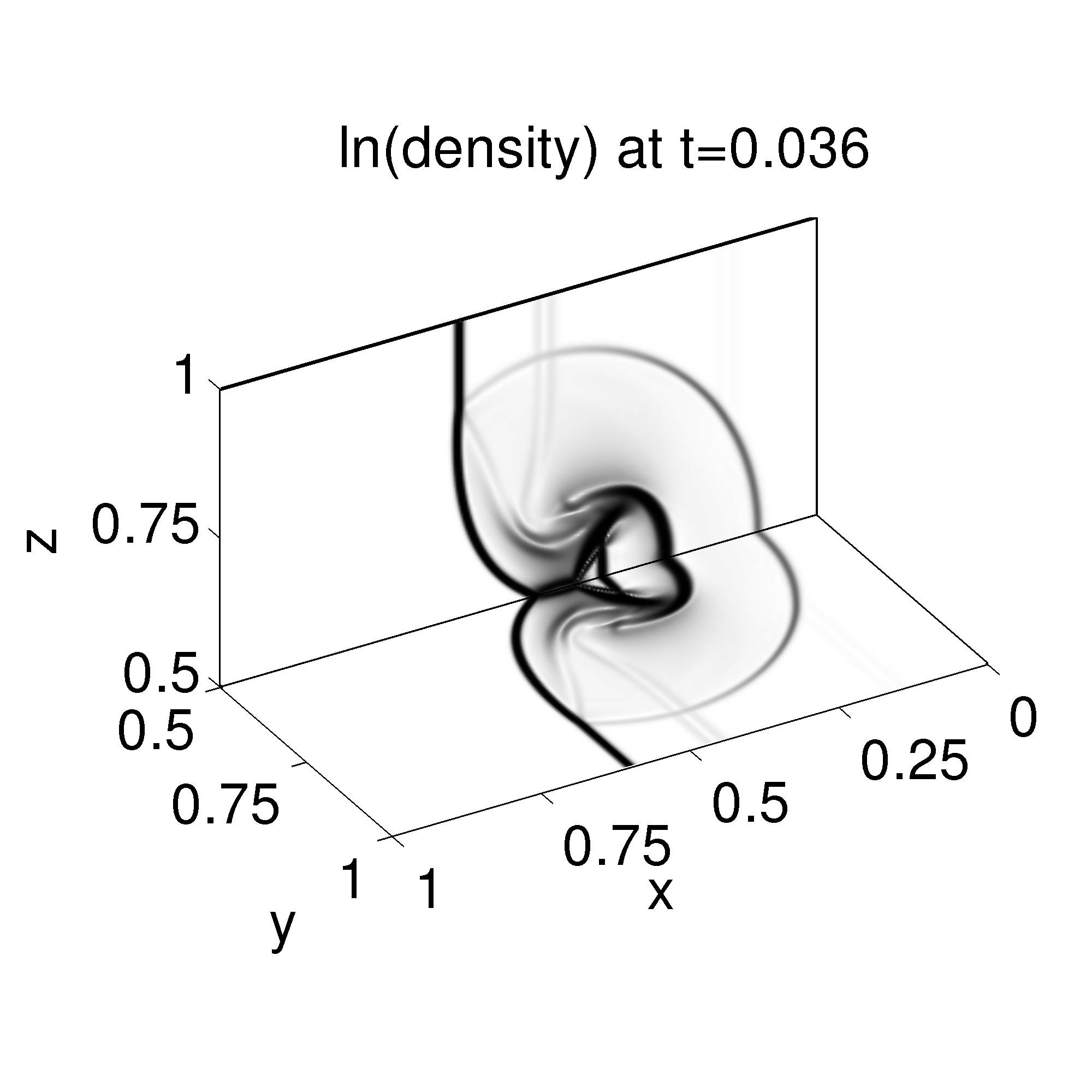} &
    \includegraphics[width=0.30\textwidth]{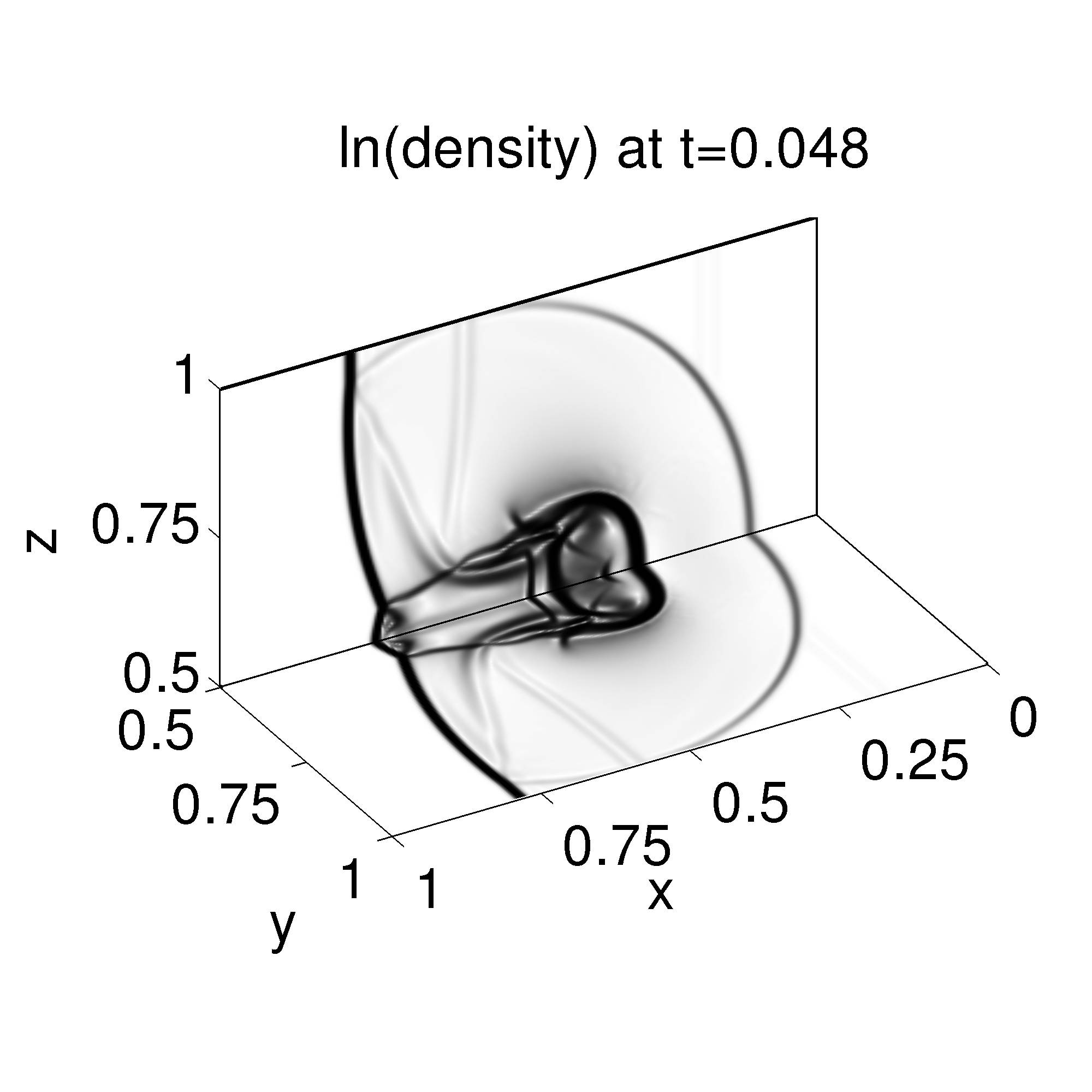} &
    \includegraphics[width=0.30\textwidth]{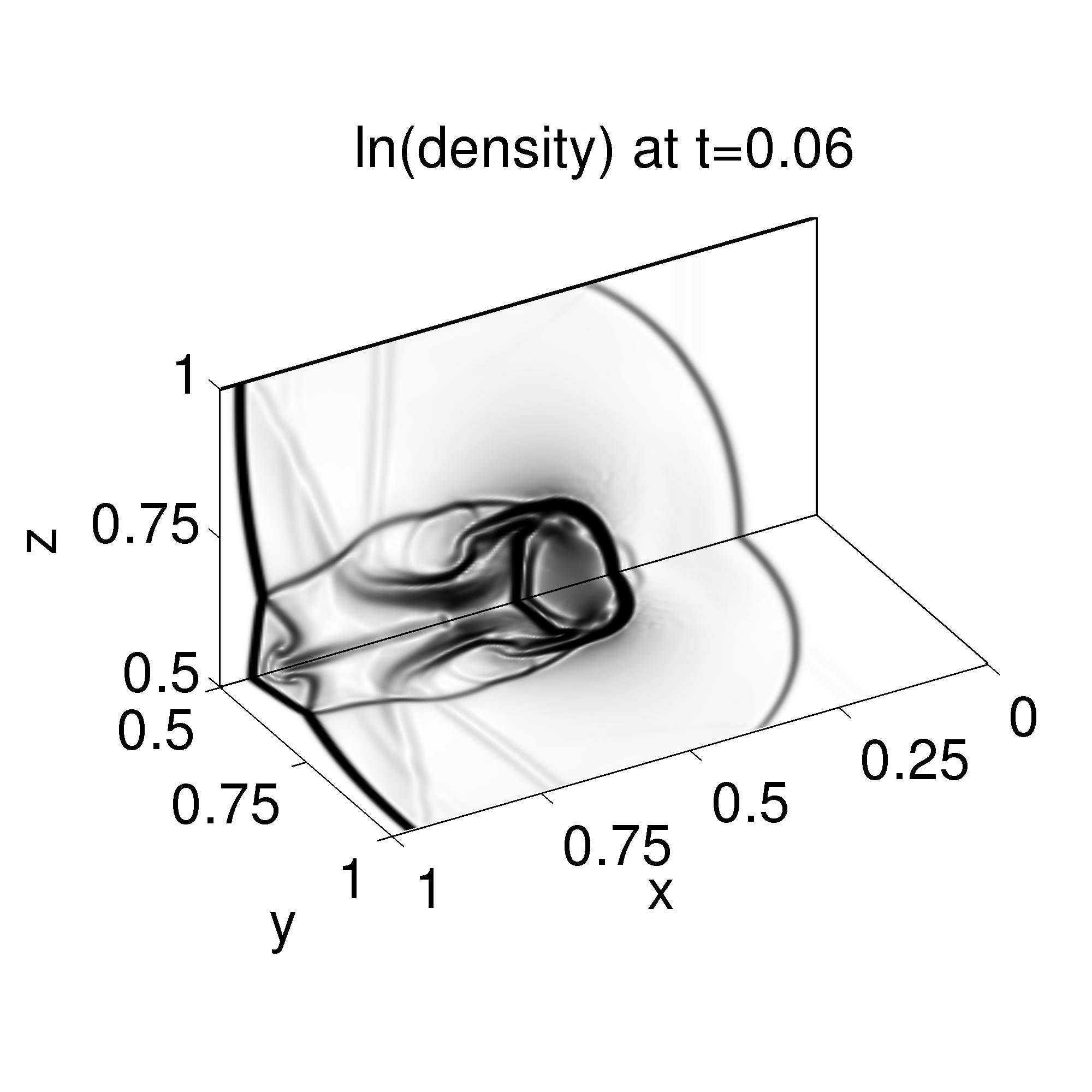} \\
\end{tabular}
    \caption{\newtext{The 3D cloud-shock interaction problem.  Schlieren plots
    of $\ln(\rho)$.  The solution here is computed using a $256\times 256
    \times 256$ mesh.  Cross-sections at $y = 0.5$ and $z = 0.5$ for the
    region $y\geq 0.5$ and $z\geq 0.5$ are shown.  Constrained transport and
    positivity-preserving limiter are turned on. 
    \label{fig:3DCloudShock}}}
\end{center}
\end{figure}

\subsection{Blast wave example}
\label{sec:Blast}

In the blast wave problems, strong shocks interact with a low-$\beta$ background, which can cause negative pressure if not handled properly.  These problems are often used to test the positivity-preserving capabilities of numerical methods for MHD~\cite{Balsara2009,Balsara1999a,Christlieb2015,Gardiner2008,Li2011,Mignone2010,Ziegler2004}.  The initial conditions contain a piecewise defined pressure:
\begin{equation}
p = \begin{cases}
0.1 & r < 0.1, \\
1000 & \text{otherwise},
\end{cases}
\end{equation}
where $r$ is the distance to the origin, and 
a constant density, velocity and magnetic field:
\begin{equation}
	(\rho, u^x, u^y, u^z, B^x, B^y, B^z) = 
            (1, 0, 0, 0, 100/\sqrt{4\pi}/\sqrt{2}, 100/\sqrt{4\pi}/\sqrt{2}, 0).
\end{equation}
The initial magnetic potential is simply
\begin{equation}
    \Avec = (0, 0, 100 y / \sqrt{4\pi} / \sqrt{2} - 100 x / \sqrt{4\pi} / \sqrt{2}).
\end{equation}
In 2D we only keep track of $A^z$, as we do with all the 2D examples.

\subsubsection{\newtext{Blast wave example: The} 2D problem}

In this section, we present our result on the 2D version of the blast wave problem.
The computational domain is $[-0.5, 0.5]\times [-0.5, 0.5]$, with zeroth order
extrapolation on the conserved quantities and first order extrapolation on the
magnetic potential as the boundary conditions on all four sides
\newtext{(i.e., conserved quantities at the ghost points are set equal to the
last interior point, and values for the magnetic potential are defined through repeated
extrapolation of two point stencils).}

\begin{figure}
\begin{center}
    \begin{tabular}{cc}
        \includegraphics[width=0.4\textwidth]{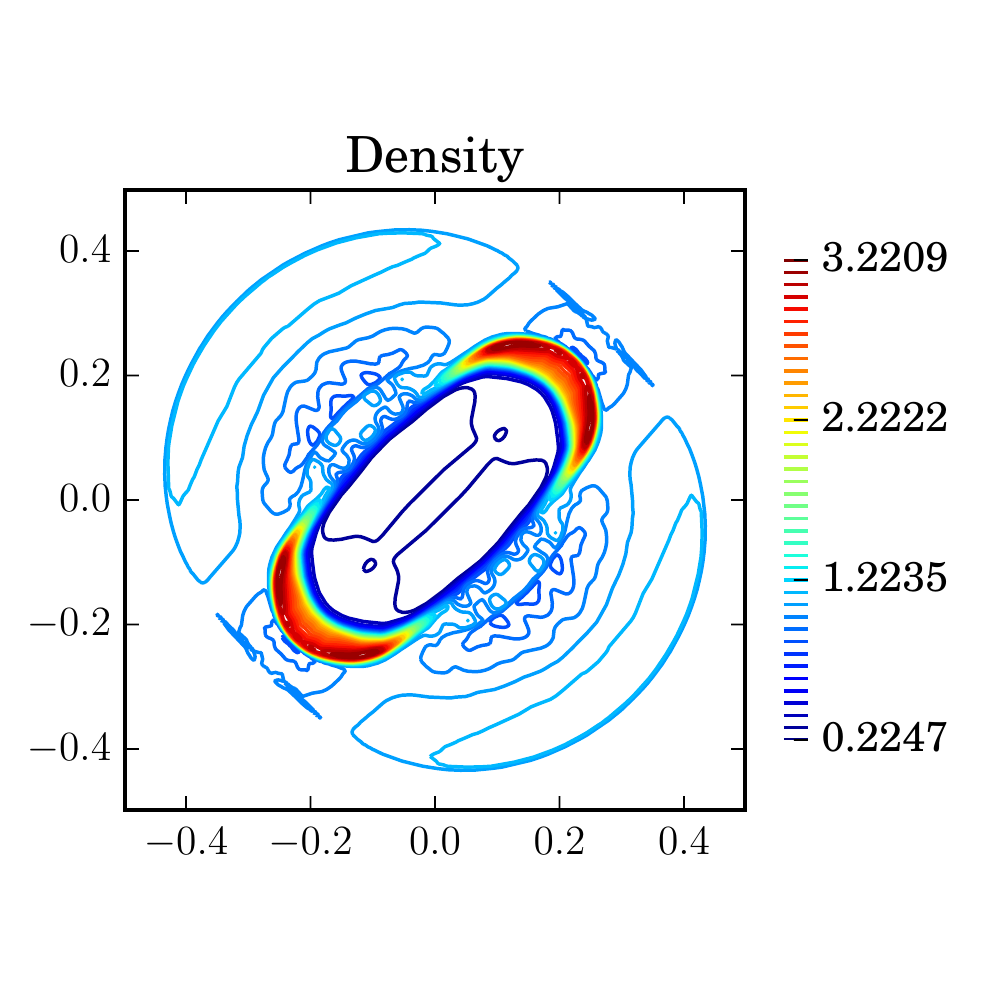} &
        \includegraphics[width=0.4\textwidth]{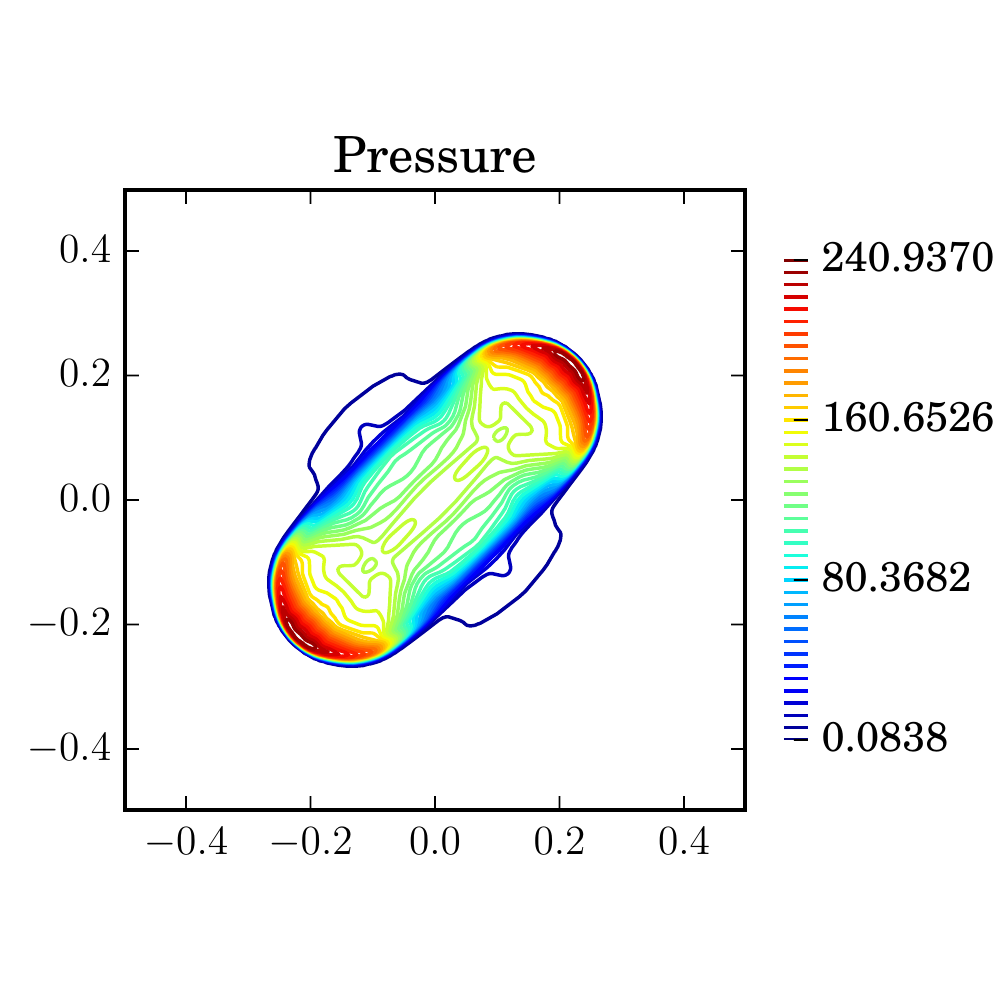} \\
        (a) & (b) \\
        \includegraphics[width=0.4\textwidth]{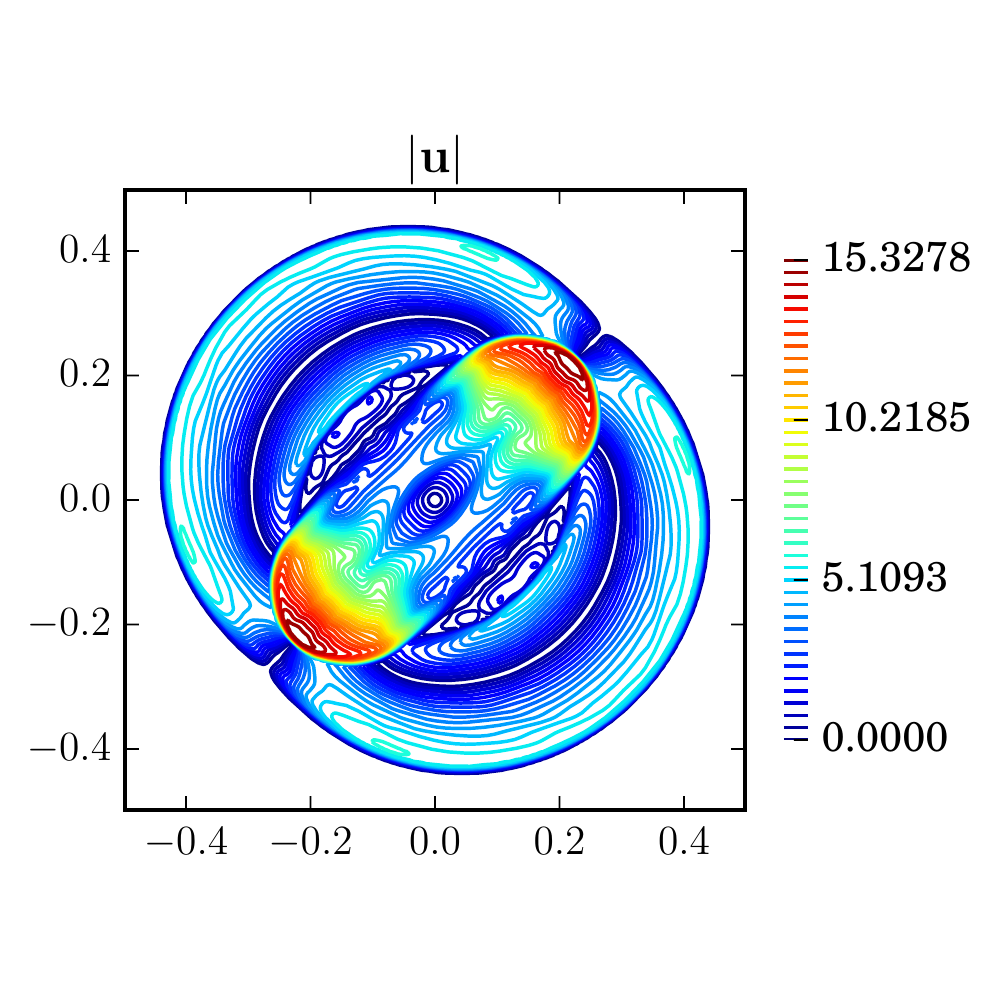} &
        \includegraphics[width=0.4\textwidth]{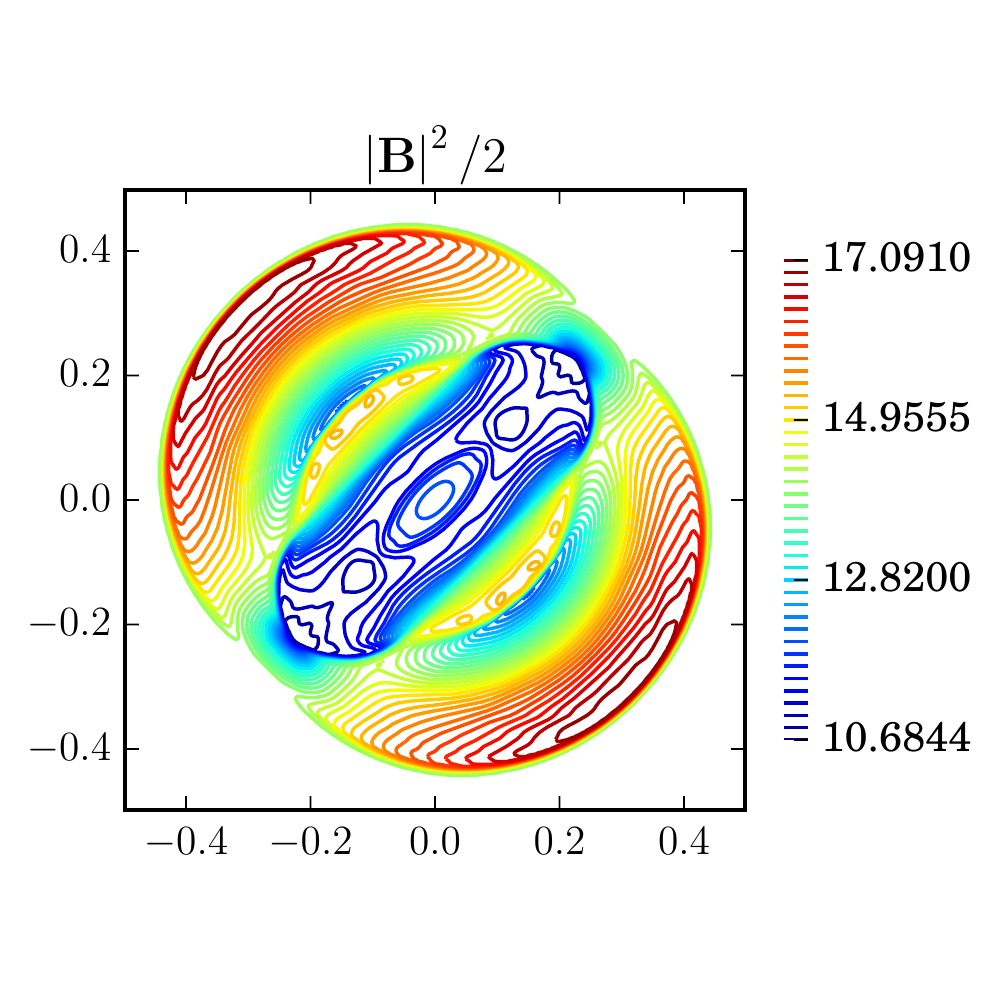} \\
        (c) & (d)
    \end{tabular}
    \caption{2D blast problem.  Shown here are the contour plots at $t = 0.01$
    of (a) density, (b) thermal pressure, (c) magnitude of velocity, and (d)
    magnetic pressure.  \newtext{A total of} $40$ equally spaced contours
    \newtext{ranging from the min to the max of the function} are used for each plot.  The mesh size is $256\times 256$.\label{fig:2DBlast}}
\end{center}
\end{figure}

\newtext{Results for the solution computed to a final time of $t=0.01$ on a
$256\times256$ mesh are presented in Figure~\ref{fig:2DBlast}.
There, we display contour plots of $\rho$, $p$,
$\left|\mathbf{u}\right|$, and $\left|\Bvec\right|$.}  
These plots are
comparable to the previous results in~\cite{Christlieb2015}.  We note that
negative pressure occurs right in the first step if positivity-preserving
limiter is turned off.

\subsubsection{\newtext{Blast wave example: The} 3D problem}

For the 3D version of the blast wave problem, we choose the computational
domain to be $[-0.5, 0.5]\times [-0.5, 0.5] \times [-0.5, 0.5]$ with zeroth
order extrapolation on the conserved quantities and first order extrapolation
on the magnetic potential as the boundary conditions on all six faces 
\newtext{(i.e., conserved quantities at the ghost points are set equal to the
last interior point, and values for the magnetic potential are defined through repeated
extrapolation of two point stencils).}

\begin{figure}
\begin{center}
    \begin{tabular}{cc}
        \includegraphics[width=0.4\textwidth]{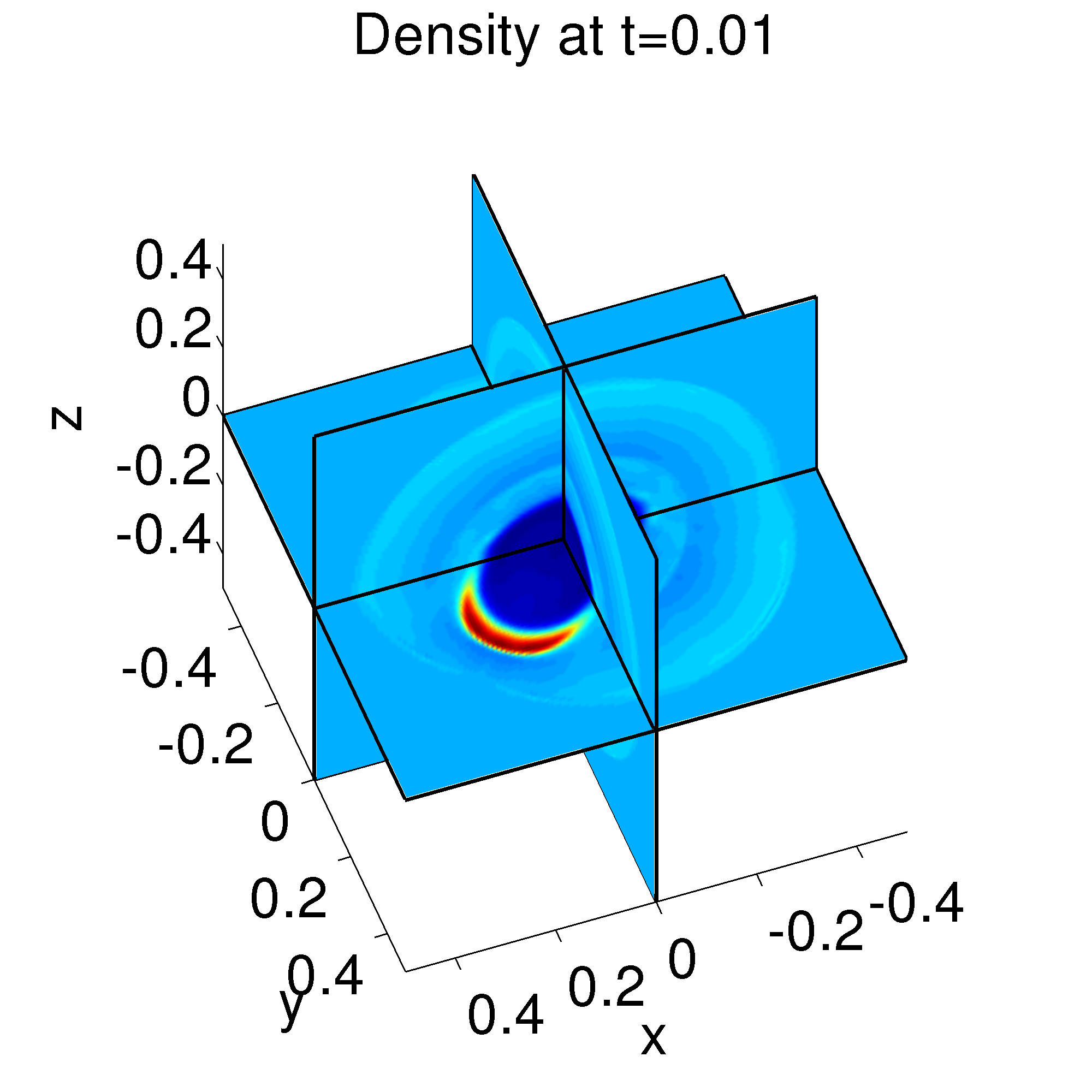} &
        \includegraphics[width=0.4\textwidth]{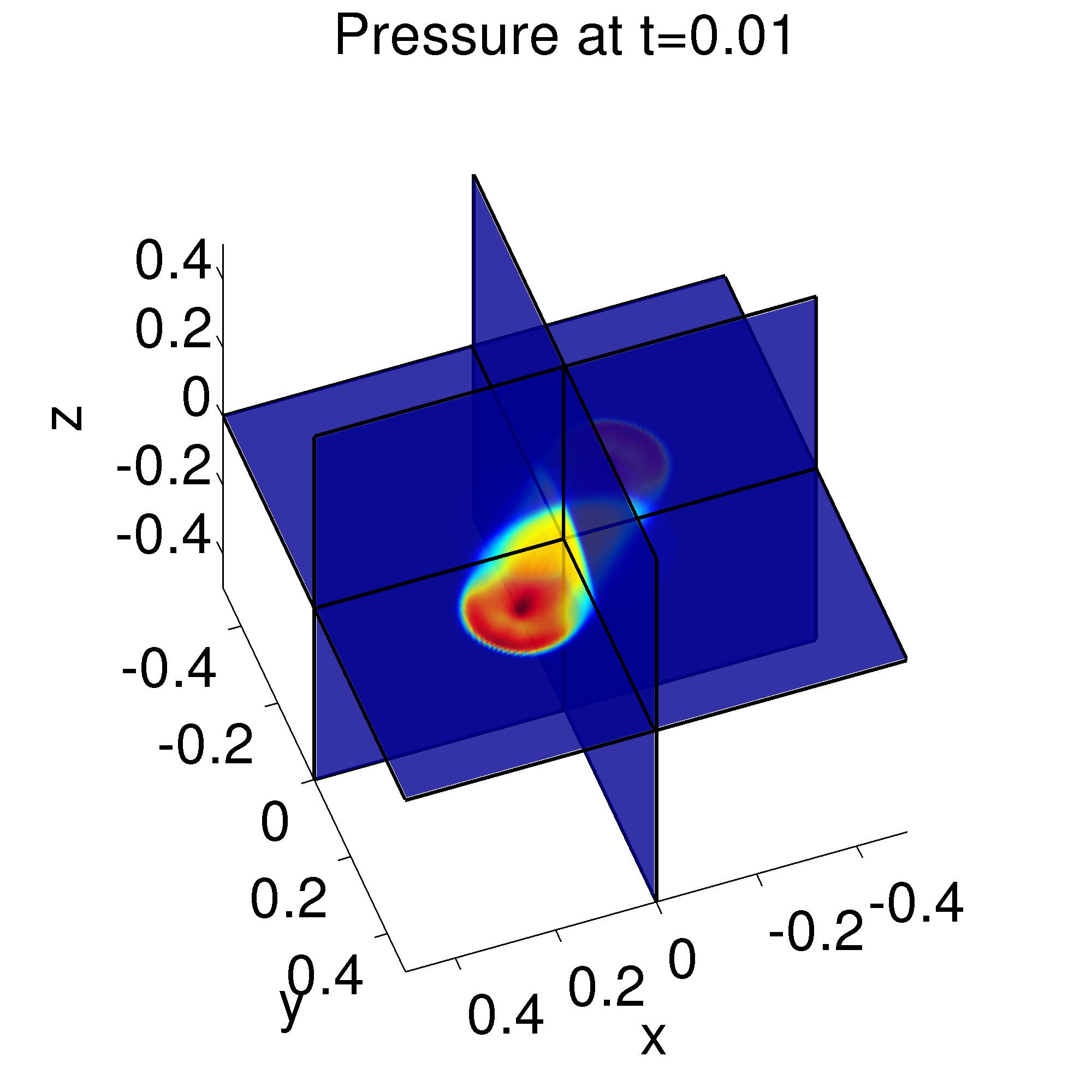} \\
        (a) & (b)
    \end{tabular}
\caption{3D blast problem.  Shown \newtext{here} are the pseudocolor plots at $t = 0.01$ of (a) density and (b) pressure.  The mesh size is $150\times 150 \times 150$. 
\newtext{In
Figure~\ref{fig:3DBlastSlice} we plot a cut of the solution along $z=0$.  The
positivity-preserving limiter is required to simulate this problem.}
\label{fig:3DBlast} }  
\end{center}
\end{figure}

\begin{figure}
\begin{center}
    \begin{tabular}{cc}
        \includegraphics[width=0.4\textwidth]{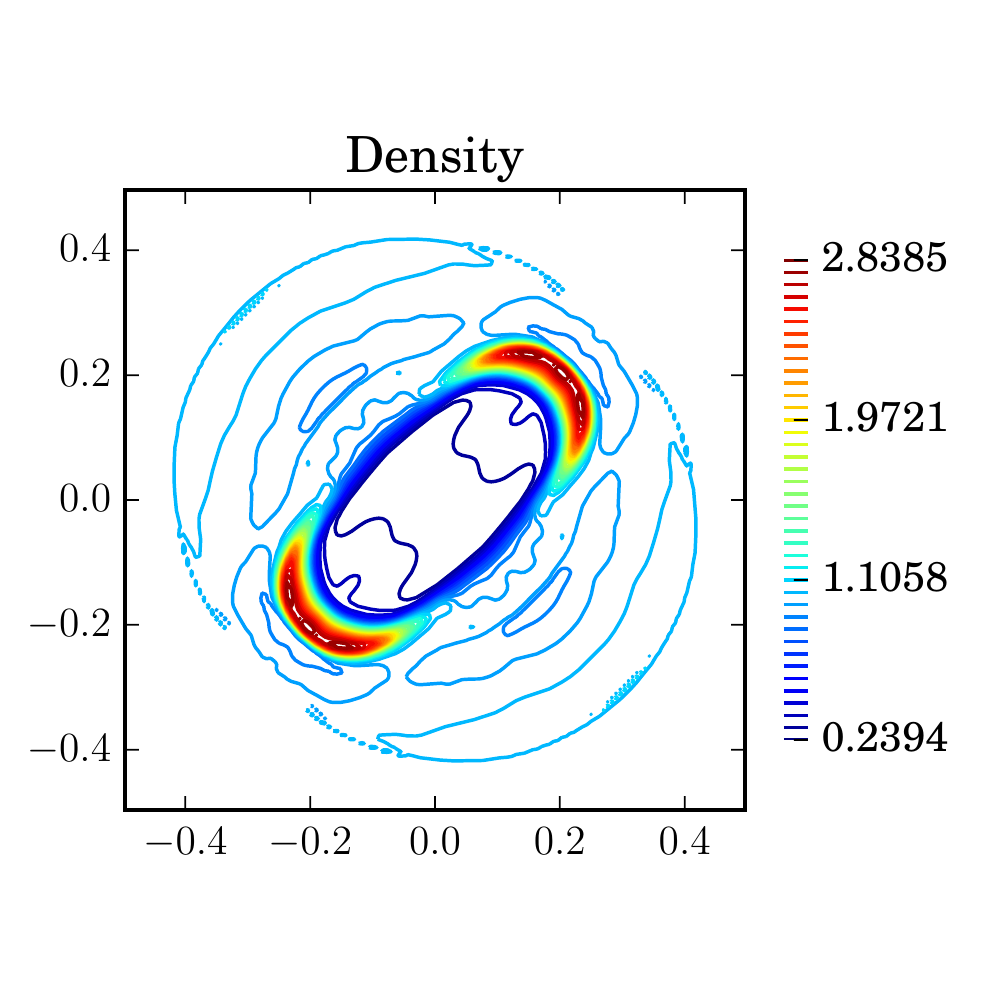} &
        \includegraphics[width=0.4\textwidth]{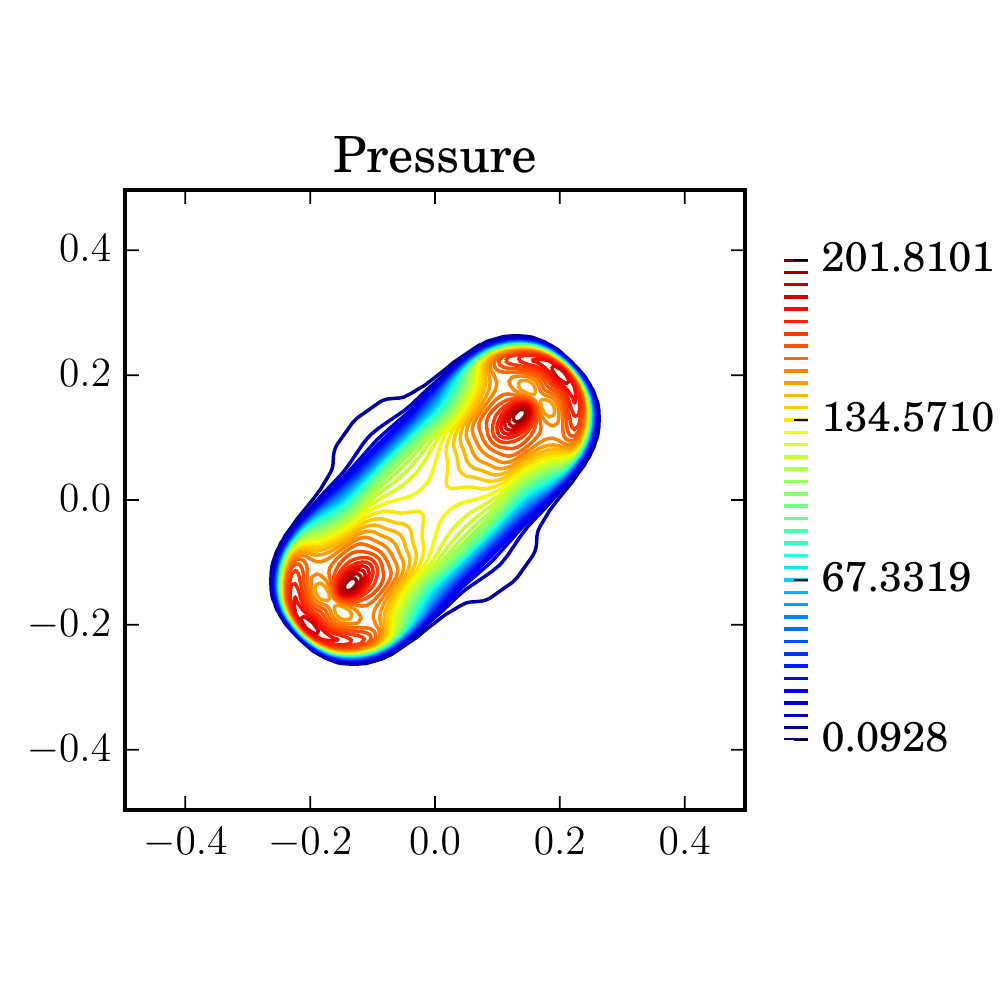} \\
        (a) & (b) \\
        \includegraphics[width=0.4\textwidth]{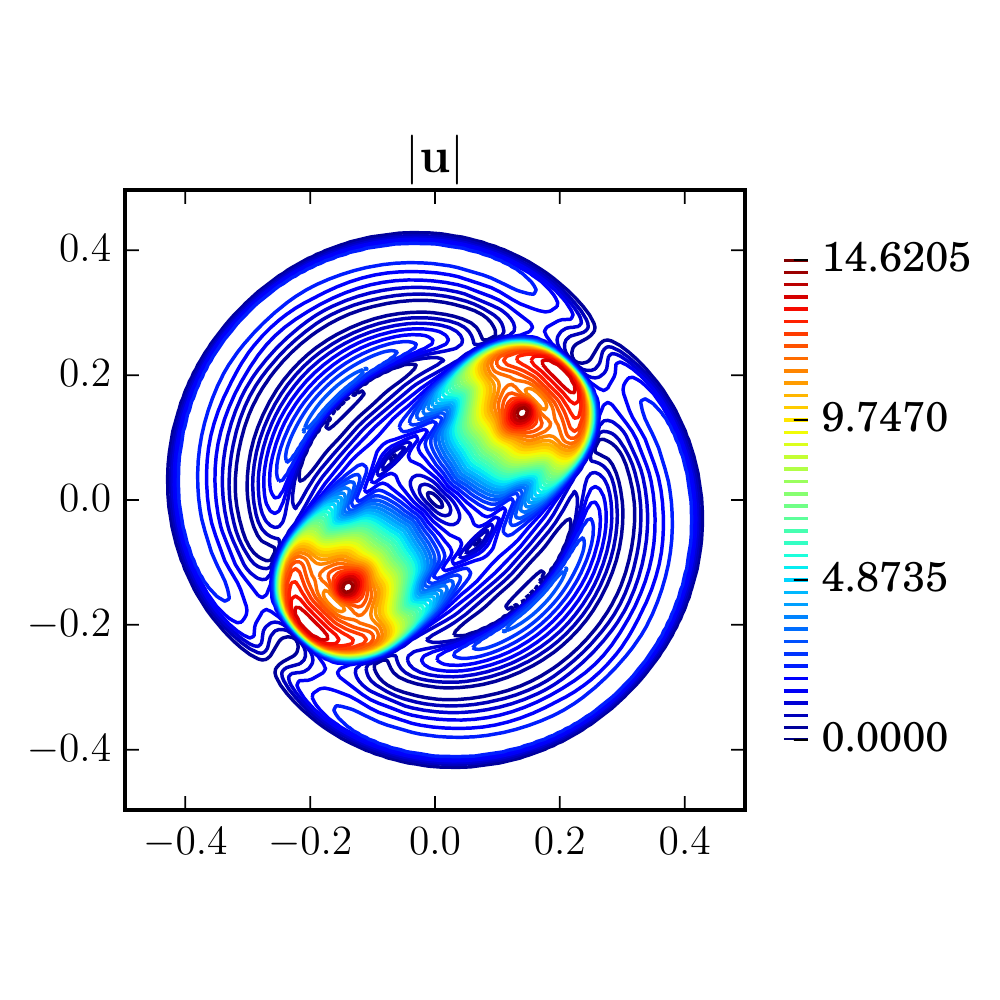} &
        \includegraphics[width=0.4\textwidth]{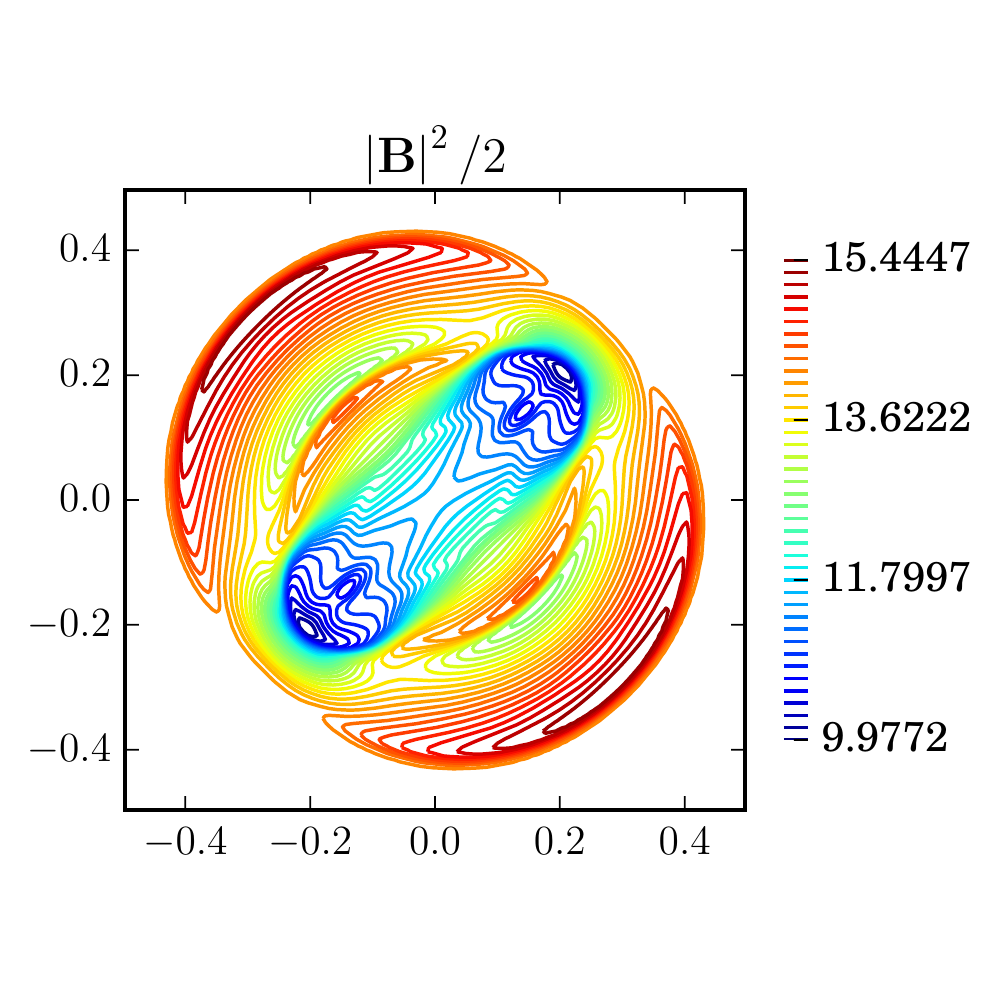} \\
        (c) & (d)
    \end{tabular}
    \caption{3D blast problem.  Shown are the contour plots at time $t = 0.01$
    cut at $z = 0$ of (a) density, (b) thermal pressure, (c) norm of velocity,
    and (d) magnetic pressure.  The solution is obtained using a $150 \times
    150 \times 150$ mesh.  \newtext{A total of} 40 equally spaced contours are used for each plot. 
    \label{fig:3DBlastSlice}}
\end{center}
\end{figure}

The solution at $t = 0.01$ is computed using a $150 \times 150 \times 150$ mesh.
We present in Figure~{\ref{fig:3DBlast}} the plots of the density and pressure, and also in Figure~{\ref{fig:3DBlastSlice}} the contour plots of the slice at $z = 0$ of 
the density, pressure,  velocity, and magnetic pressure.
These results are comparable to those found in~\cite{Christlieb2015,Gardiner2008,Mignone2010,Ziegler2004}.  We note here that negative pressure occurs in the second time step if \newtext{the} positivity-preserving limiter is turned off.

\subsection{\newtext{Errors in energy conservation}} 

\newtext{
When the positivity-preserving limiter is turned on, we make use of an energy
correction step in Eqn.~\eqref{eq:KeepP} in order to keep the pressure the same as before
the magnetic field correction.  This breaks the conservation of the energy,
and therefore we investigate the effect of this step for several test
problems.  Because (global) energy conservation only holds for problems that
have either periodic boundary conditions or constant values near the boundary
throughout the entire simulation, we only choose problems with this property for our test
cases.
}

\newtext{
For the 2D problems, the (relative) energy conservation error at time $t=t^n$ is defined as
    \begin{equation}
       \label{eq:EnergyConservationError}
       \text{Energy conservation error} :=  \frac{\left|\sum_{i,j} \left( \En^{n}_{i,j}- \En^{0}_{i,j} \right) \right|}{\sum_{i,j} \En^{0}_{i,j}},
    \end{equation}
and the energy conservation errors for the 3D problems are defined similarly.
}

\newtext{
Results for the 2D and 3D smooth \Alfven test case are presented in
Figure~\ref{fig:ConservationAlfven}, where we observe negligible errors
produced by the energy correction step.  We attribute this to the fact that
this problem retains a smooth solution for the entirety of the simulation.
Results for problems with shocks and vortices are presented in
Figure~\ref{fig:ConservationPP}, where we find non-zero errors.
For each of these test problems, we present the results from several different
sizes of meshes.  All the problems are
run to the final time found in
Sections~{\ref{sec:Alfven}--\ref{sec:Blast}} save one.
For the 2D Orszag-Tang problem, we run the
simulations to a much later time of $t=30$ in order to quantify the energy
conservation errors for a long time simulation on a non-trivial problem.
}

\newtext{
Finally, in Figure~{\ref{fig:ConservationNoPP}} we also include 
the conservation errors when the positivity-preserving limiter is turned off.
We observe that the solver retains total energy up to machine roundoff
errors, as should be the case.  
}

\newtext{
We note the following patterns in the energy conservation errors:
\begin{itemize}
    \item The errors are below 1\% for all the test problems in the duration of the simulations presented;
    \item When the positivity-preserving limiter is turned on, the errors grow linearly in time and decrease as the mesh is refined.  
\end{itemize}
We therefore conclude that the violation in energy conservation introduced by
the positivity-preserving limiter is insignificant for the problems tested in
this work.
}

\begin{figure}
\begin{center}
    \begin{tabular}{cc}
        \includegraphics[width=0.48\textwidth]{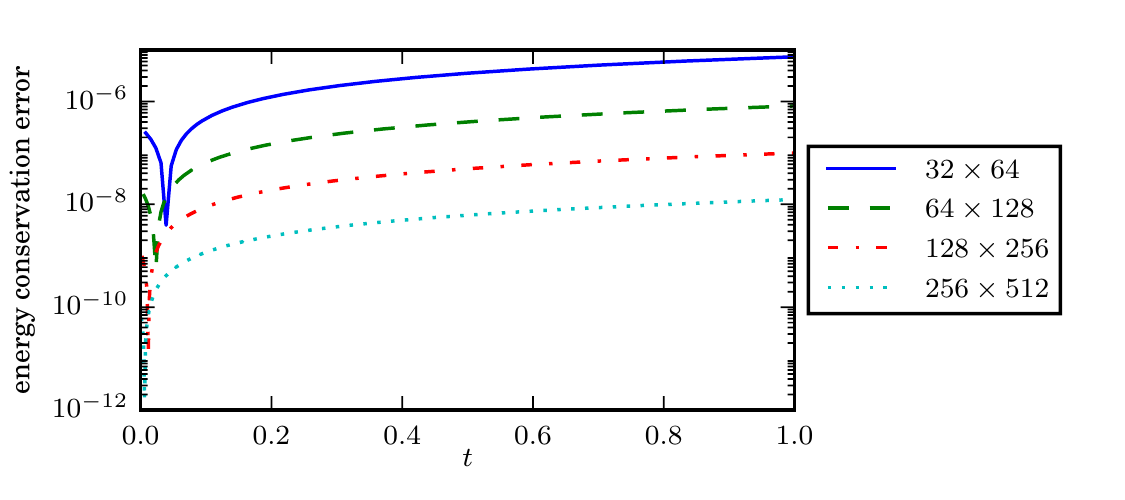} &
        \includegraphics[width=0.48\textwidth]{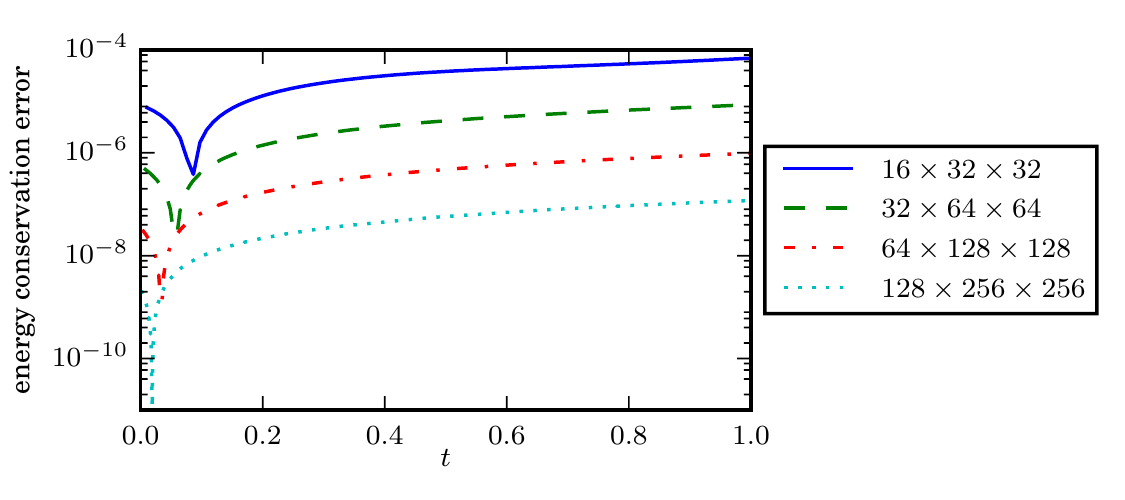} \\
          (a) & (b)
    \end{tabular}
    \caption{\newtext{Energy conservation errors for the smooth \Alfven test cases.
    Shown here are results for  the
    (a) 2D smooth \Alfven, and (b) 3D smooth \Alfven test cases. In order to
    extract the errors, we plot the results on a semi-log scale because
    otherwise the results are indiscernible from the $t$-axis.  For this
    smooth test case, the effect of the 
    the energy correction step (and hence the
    positivity-preserving limiter)
    is negligible because the solution remains
    smooth for the entire simulation.
    }
    \label{fig:ConservationAlfven}}
    \end{center}
\end{figure}

\begin{figure}
\begin{center}
    \begin{tabular}{cc}
        \includegraphics[width=0.48\textwidth]{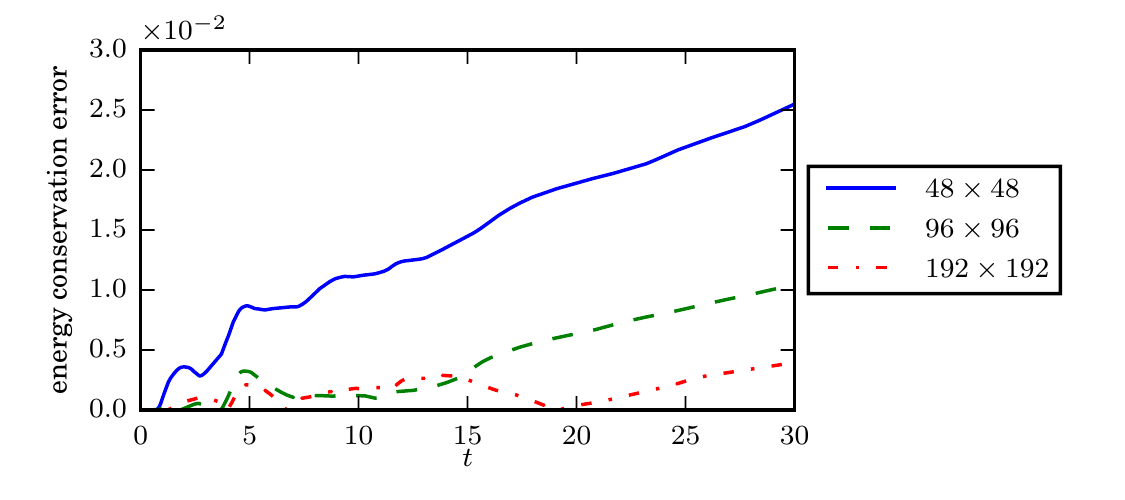} &
        \includegraphics[width=0.48\textwidth]{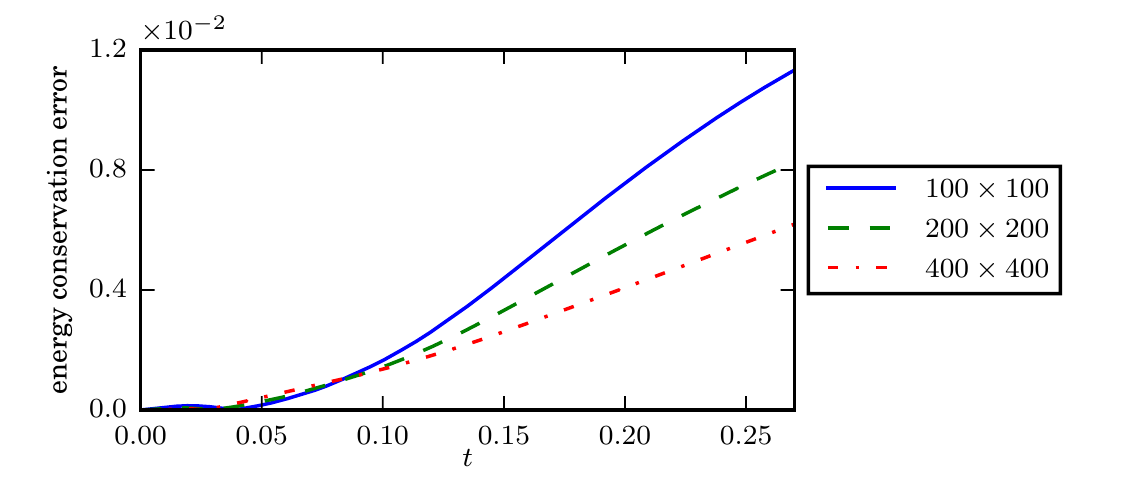} \\
        (a) \hspace{0.08\textwidth} & (b) \hspace{0.05\textwidth} \\
        \includegraphics[width=0.48\textwidth]{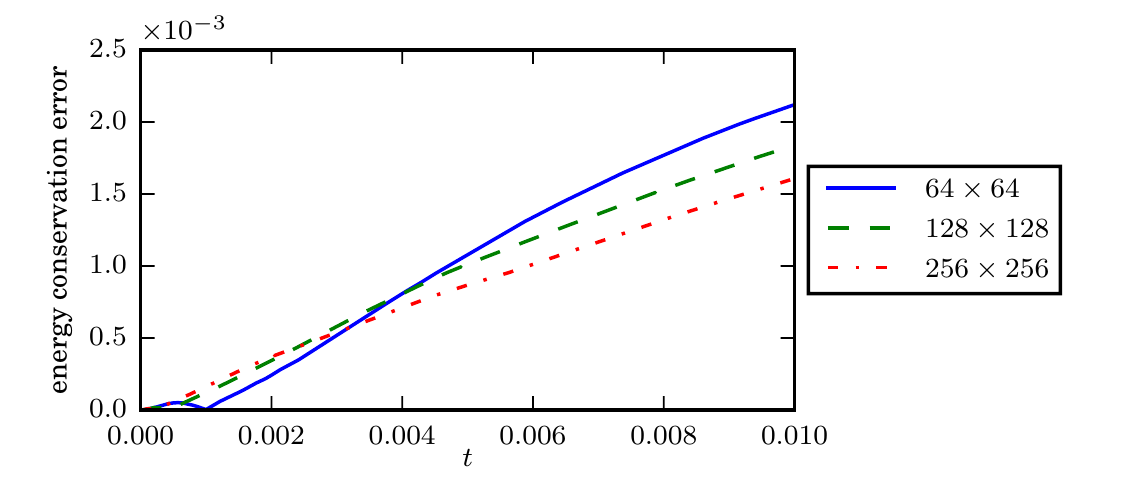} &
        \includegraphics[width=0.48\textwidth]{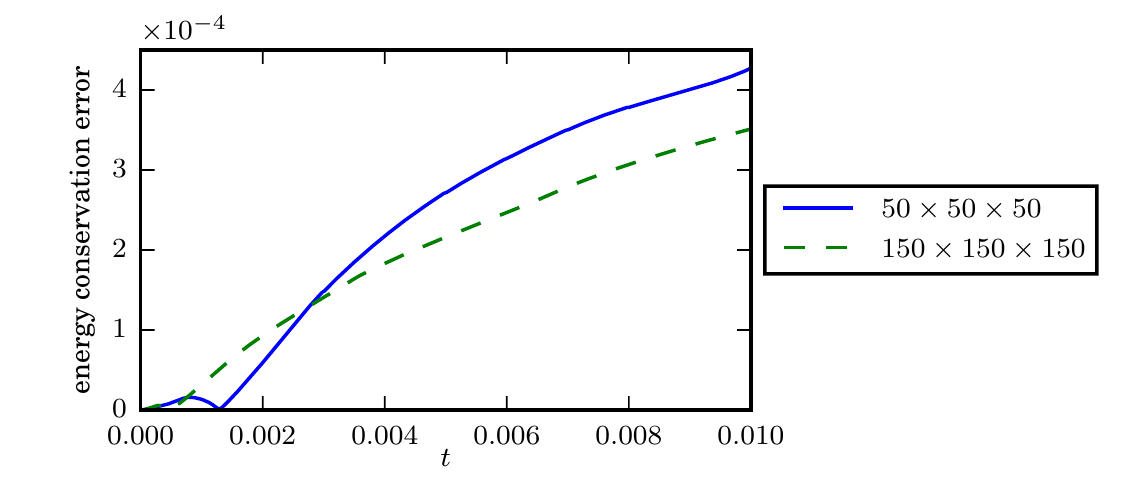} \\
        (c) \hspace{0.08\textwidth} & (d) \hspace{0.05\textwidth} \\
    \end{tabular}
    \caption{\newtext{Energy conservation errors.  Shown here are conservation errors
    when the positivity-preserving limiter (and hence the energy correction
    step) is turned on.
    (a) 2D Orszag-Tang problem, (b) 2D rotor problem, (c) 2D blast problem,
    and (d) 3D blast problem.
    Note that the rotor and blast problems require the application of a
    positivity-preserving limiter in order to run, but this comes at the
    expense of losing energy conservation.
    For the Orszag-Tang test problem, we run to a late final time.
	Again, we observe that the errors in energy conservation decrease as the
    mesh is refined.}
    \label{fig:ConservationPP}}
    \end{center}
\end{figure}

\begin{figure}
\begin{center}
    \begin{tabular}{cc}
        \includegraphics[width=0.48\textwidth]{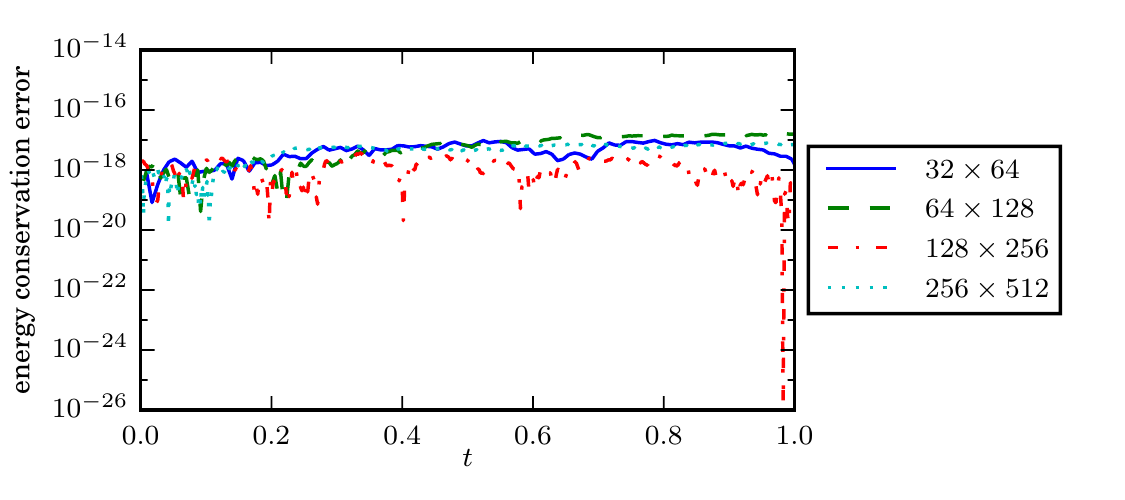} &
        \includegraphics[width=0.48\textwidth]{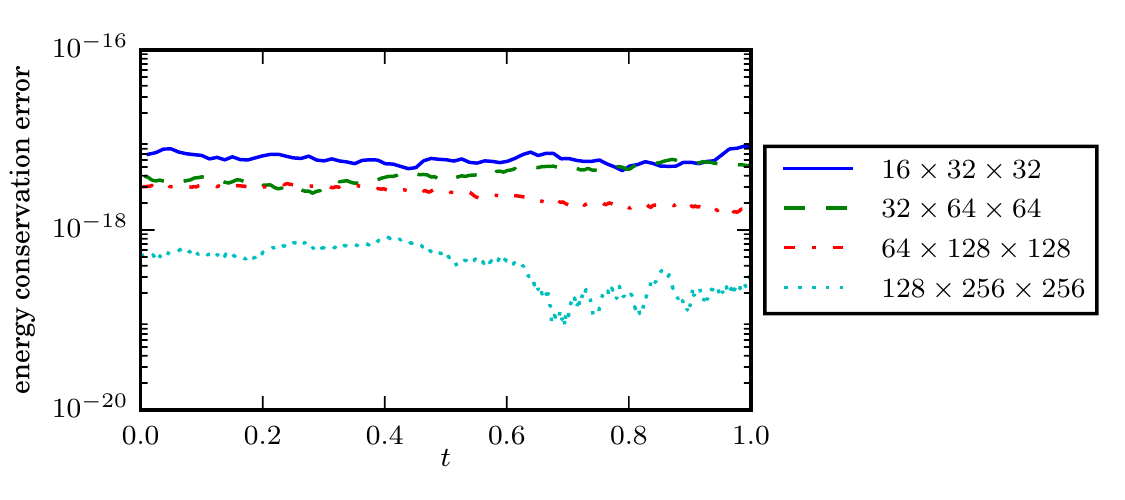} \\
        (a) \hspace{0.08\textwidth} & (b) \hspace{0.05\textwidth} \\
    \end{tabular}
    \caption{\newtext{Energy conservation errors.  Here, we show results for
    conservation errors when the positivity-preserving limiter (and hence the
    energy correction step) is turned off.
    The solver analytically conserves the discrete total
    energy up to machine precision.
    Shown here are results for  the
    (a) 2D smooth \Alfven, and (b) 3D smooth \Alfven test cases.  Note the
    logarithmic scale for the axes, and that these errors are numerically
    conserved up to machine precision.
    }
    \label{fig:ConservationNoPP}}
    \end{center}
\end{figure}

\section{Conclusion and future work}
\label{sec:CFW}

In this paper we propose a high-order single-stage single-step
positivity-preserving method for the ideal magnetohydrodynamic equations.  The
base scheme uses a finite difference WENO method with a Lax-Wendroff time
discretization that is based on the Picard integral formulation of hyperbolic
conservation laws.  A discrete divergence-free condition on the magnetic field
is found by using an unstaggered constrained transport method that evolves a
vector potential alongside the conserved quantities on the same mesh as the
conserved variables.  This vector potential is evolved with a modified version
of a finite difference Lax-Wendroff WENO method that was originally developed
for Hamilton-Jacobi equations.  This allows us to define non-oscillatory
derivatives for the magnetic field.  To further enhance the robustness of our
scheme, a flux limiter is added to preserve the positivity of the density and
pressure.  
\newtext{Unlike our previous solvers that are based on SSP-RK time stepping
\cite{Christlieb2014,Christlieb2015}, this solver does not require the use of
intermediate stages.  This reduces the total storage required for the method,
and may lead to more efficient implementation for an AMR setting.  Moreover,
we need only apply one WENO reconstruction per time step for the fluid
variables, whereas the third and fourth-order solvers in \cite{Christlieb2014}
require three and ten, respectively, WENO reconstructions per time step.
}
Numerical results show that our scheme has the expected high-order
accuracy \newtext{for smooth problems}, and is capable of solving some very
stringent test problems, even when low density and pressure are present.
Future work includes embedding this scheme into an AMR framework, extending
the scheme to curvilinear meshes to accommodate complex geometry, and 
incorporate non-ideal terms in the MHD equations by means of a semi-implicit solver.

\section*{Acknowledgements}

\newtext{We would like to thank the anonymous reviewers for their thoughtful comments and suggestions to improve this work.  
This work was supported by: AFOSR grants FA9550-15-1-0282, and FA9550-12-1-0343; the Michigan State University Foundation; NSF grant DMS-1418804;  and Oak Ridge National Laboratory (ORAU HPC LDRD).}

\bibliographystyle{unsrt}       
\bibliography{library}

\end{document}